\documentclass[a4paper,11pt]{amsart} 



\usepackage{amsmath} 
\usepackage{amssymb}
\usepackage{amsthm} 
\usepackage{graphicx} 
\usepackage{enumerate}
\usepackage{amsfonts} 
\usepackage{ulem} 
\usepackage{float} 
\usepackage{hyperref} 
\usepackage{cite} 
\usepackage[dvipsnames]{xcolor} 
\usepackage{tikz} 
\usepackage{tikz-cd} 
\usepackage{mathabx} 
\usepackage{faktor} 
\usepackage{array} 
\usepackage{transparent} 
\usepackage{mathdots} 
\usepackage{adjustbox} 
\usepackage{thmtools} 
\usepackage{thm-restate} 
\usepackage[margin=1.25in]{geometry} 
\usepackage{mathtools} 

\usepackage{caption}
\usepackage{subcaption}
\usepackage{geometry}

\newcommand{\N}{\mathbb{N}}
\newcommand{\Z}{\mathbb{Z}}
\newcommand{\R}{\mathbb{R}}


\usetikzlibrary{decorations.markings} 



\theoremstyle{plain} 
\newtheorem{thm}{Theorem}[subsection] 
\newtheorem{lemma}[thm]{Lemma} 
\newtheorem{prop}[thm]{Proposition} 
\newtheorem{cor}[thm]{Corollary} 

\theoremstyle{definition} 
\newtheorem{defn}[thm]{Definition} 

\theoremstyle{remark} 
\newtheorem*{rem}{Remark} 

\newtheorem{obs}[thm]{Observation} 


\DeclareMathOperator{\aut}{Aut} 
\DeclareMathOperator{\out}{Out} 
\DeclareMathOperator{\inn}{Inn} 
\DeclareMathOperator{\stab}{Stab} 
\DeclareMathOperator{\fix}{Fix} 
\DeclareMathSymbol{\shortminus}{\mathbin}{AMSa}{"39} 


\tikzset{->-/.style={decoration={markings, mark=at position .55 with {\arrow{>}}}, postaction={decorate}}} 
\tikzset{-<-/.style={decoration={markings, mark=at position .55 with {\arrow{<}}}, postaction={decorate}}} 

\allowdisplaybreaks






\title{Property $R_{\infty}$ for groups with infinitely many ends}
\author{Francesco Fournier-Facio}

\address{Department of Pure Mathematics and Mathematical Statistics, University of Cambridge, UK.}
\email{ff373@cam.ac.uk}

\author{Harry Iveson}
\author{Armando Martino}

\address{Mathematical Sciences, Building 54, University of Southampton, Southampton, SO17 1BJ}
\email{H.M.J.Iveson@soton.ac.uk}
\email{A.Martino@soton.ac.uk}

\author{Wagner Sgobbi}

\address{UFES - Universidade Federal do Espírito Santo - Campus Goiabeiras}
\email{wagner.sgobbi@ufes.br}

\author{Peter Wong}
\address{Bates College, Department of Mathematics, Lewiston ME 04240, USA}
\email{pwong@bates.edu}


\date{}

\setcounter{tocdepth}{1}  

\begin{document}

\begin{abstract}
    We show that an accessible group with infinitely many ends has property $R_{\infty}$. That is, it has infinitely many twisted conjugacy classes for any twisting automorphism. We deduce that having property $R_{\infty}$ is undecidable amongst finitely presented groups. 

    We also show that the same is true for a wide class of relatively hyperbolic groups, filling in some of the gaps in the literature. Specifically, we show that a non-elementary, finitely presented relatively hyperbolic group with finitely generated peripheral subgroups which are not themselves relatively hyperbolic, has property $R_{\infty}$.

\end{abstract}

	\maketitle
	
	\tableofcontents


\section{Introduction}

Property $R_{\infty}$ is a group theoretic property with connections to fixed point theory and which has been studied extensively by many authors. A non-exhaustive list might include \cite{Cox2019}, \cite{Dekimpe2014},  \cite{Goncalves2021}, \cite{Goncalves2009}, \cite{Soroko2024} and \cite{Witdouck2023}.

It is a generalisation of the property of having infinitely many conjugacy classes. Instead, one asks that there are infinitely many twisted conjugacy classes, where one twists one side of the conjugacy by an automorphism. Property $R_{\infty}$ then asks that a group have infinitely many twisted conjugacy classes for any automorphism - see Definition~\ref{def:rinf}. 


Our approach is to extend the techniques of \cite{Levitt2000}, who proved (implicitly) that any non-elementary hyperbolic group has property $R_{\infty}$. Our main theorem is that any accessible group with infinitely many ends has $R_{\infty}$.

\begin{restatable*}{thm}{mainends}
\label{inf ends has rinf} 

	Any accessible group $G$ with infinitely many ends has property $R_{\infty}$.

    In particular, any finitely presented group with infinitely many ends has property $R_{\infty}$.
\end{restatable*}

We note that, intuitively, the ends of a group is the number of components of the group at infinity. More concretely, a group with infinitely many ends acts non-trivially on a simplicial tree with finite edge stabilisers. See Theorem~\ref{stallings} for more detail, or \cite{Dicks2010} for a more extensive reference source. In particular free products of groups have infinitely many ends (except for the case of the infinite Dihedral groups) and we start by first proving the result for free products.

In fact, from the result on free products we quickly deduce that the property of having $R_{\infty}$ is undecidable amongst finitely presented groups. 

\begin{restatable*}{cor}{undecidable}
\label{R infinity undecidable}
    The property of being $R_{\infty}$ is undecidable amongst finitely presented groups. 
\end{restatable*}

At this point we should mention the result \cite[Theorem 3.3]{Felshtyn2010} where it is claimed that any non-elementary relatively hyperbolic group has property $R_{\infty}$. The proof strategy is modelled on the proof in \cite{Levitt2000}, but the technical details are all but absent and to that extent it is very hard to verify the proof given. Specifically, the technique used there is due to Paulin \cite{Paulin1991} and \cite{Paulin1997} where one takes a limit of hyperbolic spaces to produce a limiting $\R$-tree, extended by \cite{Belegradek2008} to the relatively hyperbolic case. 

Already there is an issue that the limiting tree is not projectively fixed by the automorphism, when using the results in \cite{Belegradek2008} (which is required in the proof). The result \cite[Theoreme A]{Paulin1997} does provide a projectively fixed $\R$-tree but that result is not proved in the relatively hyperbolic case and it seems that the result there would require that the peripheral subgroups be left invariant by the automorphism. In fact, this is a hypothesis that we require in our result about relatively hyperbolic groups in Theorem~\ref{rel hyp and peripheral} and we suspect that it is an essential one for these proof techniques. 

There are then various technical details missing in the treatment from \cite{Felshtyn2010}. For instance, finite generation of the parabolic subgroups is not mentioned in \cite{Felshtyn2010}, although it does appear as a hypothesis in \cite{Belegradek2008} and also appears in our Theorem~\ref{rel hyp general}. Leaving aside the invariance of the limiting tree, the next step in is then to invoke \cite[Proposition 3.2]{Levitt2000} (and also \cite[Proposition 3.1]{Levitt2000} in the case `$\lambda =1$'). 

These Propositions have various hypotheses but the main ones for \cite[Proposition 3.2 ]{Levitt2000} are that the limiting tree should be (i) Irreducible (ii) Have finite arc stabilisers and (iii) Admit finitely many orbits of directions at each branch point. 

The first of these is very plausible but merely asserted in \cite{Felshtyn2010}. For the second, smallness is invoked but without due care. Certainly, the argument given in \cite{Paulin1991} is not sufficient to produce virtually cyclic stabilisers as one needs to worry about parabolic subgroups. In fact, \cite[Theorem 1.2]{Belegradek2008} specifically proves that arc stabilisers are elementary, by which they include the parabolic subgroups. While it is possible that this technical obstacle might be overcome with a specific construction - our Lemma~\ref{trivial arc} does prove that arc stabilisers are not parabolic when one takes a limit of edge-free trees - it really merits a careful argument. In short, this hypothesis on arc stabilisers is claimed in \cite{Felshtyn2010} but is not supported by the literature.


Finally, the only time (iii) is addressed is by reference to the paper \cite{Bestvina1992a} which does not seem to be relevant to that issue (that there are finitely many orbits of directions under the stabiliser of a branch point). It is possible that the paper meant was \cite{Bestvina1991} as referenced by \cite{Levitt2000}, but this is also not a correct reference for that fact; however, in \cite{Levitt2000} there are other ways of obtaining this result which do not seem available in the relatively hyperbolic case. 

That said, the broad strokes of the argument are correct and we address these technical issues carefully to produce our main result about groups with infinitely many ends as well as a fairly general result about relatively hyperbolic groups.

\begin{restatable*}{thm}{relhyp}
\label{rel hyp general}
	Let $G$ be a non-elementary finitely presented relatively hyperbolic group $G$ whose peripheral subgroups are finitely generated but not relatively hyperbolic. Then $G$ has property $R_{\infty}$. 
\end{restatable*}

\begin{rem}
    We note that this Theorem recovers the result of \cite{Levitt2000} that any non-elementary hyperbolic group has property $R_{\infty}$, although we really use their argument except for the fact that we make use of JSJ decompositions at the final stage.  
\end{rem}

Our proof uses train track methods for free products so that we can refer to the literature to address the technical issues mentioned above. We then show that groups with infinitely many ends have $R_{\infty}$ by extending the results for free products using the tree of cyclinders construction of Guirardel and Levitt, \cite{Guirardel2007} and \cite{Guirardel2011}. The idea here is that a group with infinitely many ends 
either admits a finite normal subgroup so that the quotient is a free product or admits a tree which is invariant under all automorphisms. See Theorem~\ref{infinitely many ends}. 

Finally, we prove the Theorem for relatively hyperbolic groups using the canonical JSJ splittings of \cite{Guirardel2016}. 

\medspace



The first version of this article was placed on the arXiv by HI, AM, WS and PW. We then received communications from FFF and Anthony Genevois independently observing that it is possible to broaden the results in our paper. We are hugely grateful for these observations and insights. 

The most general way to do this seems to be via quasi-morphisms, and this is detailed in Appendix B. The most general statement of these results is as follows.

\begin{restatable*}{cor}{generalrinfty}
	\label{generalrinfty}
	Let $G$ be a finitely generated group satisfying one of the following properties.
	\begin{enumerate}
		\item $G$ is a non-elementary hyperbolic group.
		\item $G$ is non-elementary hyperbolic relative to a collection of finitely generated subgroups, none of which is relatively hyperbolic.
		\item $G$ has infinitely many ends.
		\item $G$ is a graph product of groups over a finite graph that does not decompose non-trivially as a join and is not reduced to a single vertex.
		\item $G$ is a graph product of abelian groups and it is not virtually abelian.
	\end{enumerate}
	Then $G$ has property $R_\infty$.
\end{restatable*}

We note that the main body of this paper constructs translation length functions arising from trees which are constant on twisted conjugacy classes, whereas the method via quasi-morphisms construct such functions which are bounded on twisted conjugacy classes (albeit for all automorphisms at the same time). Nevertheless, the methods should be seen as analogous both in their overall strategy and in the details of their technical construction. 

\subsection*{Acknowledgements}

FFF is supported by the Herchel Smith Postdoctoral Fellowship Fund. He thanks Yuri Santos Rego for useful comments. WS was financed, in part, by the São Paulo Research Foundation (FAPESP), Brasil. Process Numbers 2017/21208-0 and 2019/03150-0.


\section{Twisted Conjugacy and the  \texorpdfstring{$R_{\infty}$ } \  Property}

\subsection{Twisted Conjugacy}			

\begin{defn}\label{defn twisted conjugate}
	Let $G$ be a group and $\varphi\in\aut(G)$ an automorphism of $G$.
	We define a relation $\sim_{\varphi}$ on $G$ by $x\sim_{\varphi}y$ if and only if $\exists w\in G$ with $(w\varphi)xw^{-1}=y$.
	If $x\sim_{\varphi}y$, we say that $x$ and $y$ are {twisted conjugates} in $G$.
\end{defn}

\begin{rem}
    We take the action of $\aut(G)$ on $G$ to be a right action, writing $g\cdot\varphi$ or $g\varphi$ for the image of $g\in G$ under the automorphism $\varphi\in\aut(G)$.
\end{rem}

\begin{rem}
    Note that there is an asymmetry in the definition of twisted conjugacy. The side on which we place the inverse makes no difference, but the side on which we put the automorphism does. However, 
    $$
    w x (w \varphi)^{-1} = (u \varphi^{-1}) x u^{-1}, \text{ where } u = w \varphi.
    $$

    This shows the `left' and `right' versions of twisted conjugacy are related at the cost of changing the automorphism to its inverse. Since the property of having $R_{\infty}$ - Definition~\ref{def:rinf for autos} - is about all possible automorphisms, it makes no difference which we choose. 
\end{rem}

The following is a standard fact and an easy exercise.

\begin{lemma}
	Twisted conjugacy ($\sim_{\varphi}$) is an equivalence relation on $G$.
\end{lemma}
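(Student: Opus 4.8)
The plan is to verify the three axioms of an equivalence relation directly from Definition~\ref{defn twisted conjugate}, using the group structure and the fact that $\varphi$ is an automorphism (so in particular a homomorphism with a two-sided inverse). None of this is deep; the only mild care needed is keeping track of the right-action convention for $\aut(G)$.

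\begin{proof}
We check that $\sim_\varphi$ is reflexive, symmetric, and transitive.

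\medskip
\noindent\textbf{Reflexivity.} For any $x \in G$, take $w = 1$. Then $(w\varphi)xw^{-1} = 1 \cdot x \cdot 1 = x$, using that $1\varphi = 1$ since $\varphi$ is a homomorphism. Hence $x \sim_\varphi x$.

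\medskip
\noindent\textbf{Symmetry.} Suppose $x \sim_\varphi y$, so there is $w \in G$ with $(w\varphi)xw^{-1} = y$. Solving for $x$ gives $x = (w\varphi)^{-1} y w = (w^{-1}\varphi) y w$, where we used that $\varphi$ is a homomorphism so $(w\varphi)^{-1} = (w^{-1})\varphi$. Set $u = w^{-1}$; then $(u\varphi) y u^{-1} = (w^{-1}\varphi) y w = x$, so $y \sim_\varphi x$.

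\medskip
\noindent\textbf{Transitivity.} Suppose $x \sim_\varphi y$ and $y \sim_\varphi z$, witnessed by $w_1, w_2 \in G$ with $(w_1\varphi)xw_1^{-1} = y$ and $(w_2\varphi)yw_2^{-1} = z$. Substituting the first into the second,
\[
z = (w_2\varphi)(w_1\varphi)\,x\,w_1^{-1}w_2^{-1} = \big((w_2 w_1)\varphi\big)\,x\,(w_2 w_1)^{-1},
\]
using that $\varphi$ is a homomorphism so $(w_2\varphi)(w_1\varphi) = (w_2 w_1)\varphi$ and that $w_1^{-1}w_2^{-1} = (w_2 w_1)^{-1}$. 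Taking $w = w_2 w_1$ shows $x \sim_\varphi z$.
\end{proof}

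The potential "obstacle" is purely bookkeeping: one must apply $\varphi$ on the correct side and in the correct order (note the reversal $w_2 w_1$ in transitivity, coming from the composition order of the conjugations). Everything else is immediate from the group axioms and the homomorphism property of $\varphi$; the bijectivity of $\varphi$ is not even needed for this particular statement, though it is of course essential for $\sim_\varphi$ to interact well with the twisted conjugacy classes later.
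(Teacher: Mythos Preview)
Your proof is correct; the paper itself omits the argument entirely, remarking only that it is ``a standard fact and an easy exercise,'' so your direct verification of reflexivity, symmetry, and transitivity is exactly the expected approach.
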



\begin{defn}[$R_{\infty}$]
\label{def:rinf for autos}
	Let $G$ be a group and $\varphi\in\aut(G)$.
	We say that $G$ has property $R_{\infty}(\varphi)$ if $\sim_{\varphi}$ has infinitely many equivalence classes in $G$.
	We say that $G$ has property $R_{\infty}$ if $G$ has $R_{\infty}(\varphi)$ for all $\varphi\in\aut(G)$.
\end{defn}

\begin{lemma}\label{lemma inner autos preserve twisted conjugacy}
	If $\iota\in\inn(G)$ then $G$ has $R_{\infty}(\varphi)$ if and only if $G$ has $R_{\infty}(\varphi\iota)$.
\end{lemma}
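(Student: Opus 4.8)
The plan is to show that the equivalence relations $\sim_\varphi$ and $\sim_{\varphi\iota}$ actually have the \emph{same} equivalence classes — or, slightly more flexibly, that there is a bijection between the sets of classes — from which the statement about cardinalities (in particular, one being infinite iff the other is) follows immediately. First I would write $\iota = \iota_a$ for conjugation by a fixed element $a \in G$, and I would need to fix a convention: since $\aut(G)$ acts on the right, I will take $g \cdot \iota_a = a^{-1} g a$ (the precise convention only shifts $a$ to $a^{-1}$ and does not affect the argument). Then $g \cdot (\varphi \iota_a) = a^{-1}(g\varphi)a$.

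The key computation is to unwind the defining relation for $\sim_{\varphi\iota_a}$. We have $x \sim_{\varphi\iota_a} y$ iff there is $w \in G$ with $(w \cdot (\varphi\iota_a)) \, x \, w^{-1} = y$, i.e.
$$
w a^{-1}(x\varphi)a\, w^{-1} = y.
$$
Setting $v = w a^{-1}$ (a bijective reparametrisation of $w$, since $a$ is fixed), this reads $v (x\varphi) a w^{-1} = v(x\varphi) a (a v)^{-1}\cdot (a v w^{-1})$... — more cleanly, substitute $w = va$ directly: the relation becomes $v a a^{-1} (x\varphi) a (va)^{-1} = v (x\varphi) a a^{-1} v^{-1} = v(x\varphi)v^{-1}$... one must be slightly careful, but after the correct substitution $w = v$ composed with the appropriate right translation, the relation collapses to $(v\varphi)\,x\,v^{-1} = y$ possibly after also absorbing a conjugation on the $y$ side. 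The upshot I expect: $x \sim_{\varphi\iota_a} y$ iff $x \sim_\varphi y'$ for $y' $ a fixed translate/conjugate of $y$ by $a$, so the map $[y]_\varphi \mapsto [y']_{\varphi\iota_a}$ is a well-defined bijection on equivalence classes.

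The only real obstacle is bookkeeping: getting the left/right conventions for the $\aut(G)$-action and for $\inn(G) \le \aut(G)$ consistent, and tracking which side the stray copy of $a$ ends up on after the substitution. There is no conceptual difficulty — this is exactly the kind of "standard fact and easy exercise" flagged earlier for $\sim_\varphi$ being an equivalence relation — so I would present the single substitution $w \mapsto w a^{\pm 1}$, display the resulting identity showing $(w\cdot(\varphi\iota))xw^{-1} = y \iff (w'\varphi)x(w')^{-1} = a^{\epsilon} y a^{-\epsilon}$ for the appropriate sign $\epsilon$, and conclude that conjugation by $a$ induces a bijection from the $\sim_{\varphi}$-classes to the $\sim_{\varphi\iota}$-classes. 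Hence one set of classes is infinite if and only if the other is, which is exactly $R_\infty(\varphi) \iff R_\infty(\varphi\iota)$.
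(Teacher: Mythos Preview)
Your approach is the same as the paper's: exhibit an explicit bijection between $\sim_\varphi$-classes and $\sim_{\varphi\iota_a}$-classes. However, your displayed computation contains a genuine error, not just a bookkeeping slip. In the defining relation $(w\cdot(\varphi\iota_a))\,x\,w^{-1}=y$, the automorphism $\varphi\iota_a$ is applied to $w$, not to $x$; with your convention $g\cdot\iota_a=a^{-1}ga$ this gives
\[
a^{-1}(w\varphi)a\cdot x\cdot w^{-1}=y,
\]
not $wa^{-1}(x\varphi)aw^{-1}=y$ as you wrote. Multiplying on the left by $a$ yields $(w\varphi)(ax)w^{-1}=ay$, i.e.
\[
x\sim_{\varphi\iota_a}y \quad\Longleftrightarrow\quad ax\sim_\varphi ay.
\]
So the bijection on classes is induced by \emph{left multiplication} by $a$, not conjugation by $a$ as you conclude. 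This is exactly what the paper does (with $a$ called $x$ there): it shows $y\sim_\varphi z \Leftrightarrow xy\sim_{\varphi\iota_x}xz$ in one clean line. Once you correct the side on which the automorphism acts, your substitution idea works and the stray $a$'s fall out immediately; there is no need for the further manoeuvres you sketch.
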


\begin{proof}
	Let $x\in G$ and let $\iota_{x}\in\inn(G)$ be the automorphism $g\mapsto x^{-1}gx=g^{x}$.
	Let $y,z\in G$. Then:
	\begin{align*}
		&	\ y \sim_{\varphi} z	\\
		\Leftrightarrow	&	\ \exists w\in G \text{ such that } z=(w\varphi)yw^{-1}	\\
		\Leftrightarrow	&	\ \exists w\in G \text{ such that } xz=x(w\varphi)(x^{-1}x)yw^{-1}=(w\varphi)^{x}(xy)w^{-1}=(w\varphi\iota_{x})(xy)w^{-1}	\\
		\Leftrightarrow	&	\ xy \sim_{\varphi\iota_{x}} xz .
	\end{align*}
	Thus we have a bijection between the equivalence classes of $\sim_{\varphi}$ and those of $\sim_{\varphi\iota_{x}}$.
	Hence $G$ has $R_{\infty}(\varphi)$ if and only if $G$ has $R_{\infty}(\varphi\iota_{x})$.
\end{proof}

\begin{defn}
\label{def:rinf}
	Let $G$ be a group and let $\Phi\in\out(G)$.
	We say that $G$ has property $R_{\infty}(\Phi)$ if $G$ has $R_{\infty}(\varphi)$ for some (and hence every) automorphism $\varphi\in\Phi$.
\end{defn}

\begin{rem}
	By Lemma \ref{lemma inner autos preserve twisted conjugacy}, this concept of $R_{\infty}(\Phi)$ for $\Phi\in\out(G)$ is well-defined, and it follows that $G$ has $R_{\infty}$ (i.e. $G$ has $R_{\infty}(\varphi)$ for all $\varphi\in\aut(G)$) if and only if $G$ has $R_{\infty}(\Phi)$ for all $\Phi\in\out(G)$.
\end{rem}

\begin{rem}
As above, let $\Phi\in\out(G)$ and fix $\varphi \in \Phi$. Following \cite{Levitt2000}, we say that $\alpha,\beta \in \Phi$ are isogredient if and only if there exists $g \in G$ such that $\alpha=\iota_g \beta \iota_g^{-1}$, where $\iota_g$ denotes the inner automorphism with $\iota_g(h)=ghg^{-1}$ for all $h \in G$. We denote by $S(\Phi)$ the set of equivalence classes (called isogredience classes) of automorphisms representing $\Phi$. We say $G$ has property $S_\infty$ if $S(\Phi)$ is infinite for every $\Phi \in \out(G)$. As stated in the third paragraph of our introduction, we attribute the proof of property $R_\infty$ for non-elementary hyperbolic groups primarily to \cite{Levitt2000}. Indeed, the authors show in Theorem~3.5 that any such group possesses property $S_\infty$. But this, in turn, easily implies property $R_\infty$, as we show below. This observation was also made by the paper \cite{Felshtyn2004}. Write $\alpha=\iota_r \varphi$ and $\beta=\iota_s \varphi$ for $r,s \in G$. We claim that $\alpha$ and $\beta$ are isogredient if and only if $\overline{r}\sim_{\overline{\varphi}}\overline{s}$ in the quotient group $G/Z(G)$, where $\overline{\varphi}$ is the automorphism naturally induced from $\varphi$. In fact, $\alpha$ and $\beta$ being isogredient means $\iota_r\varphi=\iota_g\iota_s\varphi\iota_g^{-1}$ for some $g \in G$. Since $\varphi\iota_g^{-1}=\iota_{\varphi(g)^{-1}}\varphi$, the former equation is equivalent to $\iota_{r}=\iota_{gs\varphi(g)^{-1}}$, or $gs\varphi(g)^{-1}r^{-1} \in Z(G)$, which in turn is clearly equivalent to saying that $\overline{r}\sim_{\overline{\varphi}}\overline{s}$ in $G/Z(G)$. Therefore, if the set $S(\Phi)$ is infinite, then $G/Z(G)$ has $R_\infty(\overline{\varphi})$, which implies $G$ has $R_\infty(\varphi)$ (see Lemma \ref{lemma G/N R-infiity implies G R-infinity}). We conclude that property $S_\infty$ for $G$ implies property $R_\infty$ for $G$.
\end{rem}

\subsection{Mapping Torus} 			

Our argument about twisted conjugacy classes uses the standard technique of converting questions about twisted conjugacy in a group $G$ to ones concerning genuine conjugacy in a mapping torus of $G$. We therefore recall the definition of a mapping torus.

\begin{defn}[Mapping torus]
	Let $G$ be a group and $\varphi\in\aut(G)$.
	The {mapping torus} of $\varphi$ is the semi-direct product $M_{\varphi}:=G\rtimes_{\varphi}\mathbb{Z}=\langle G,t | g^{t}=g\varphi \ \forall g\in G \rangle$.
\end{defn}

\begin{rem}
Since $M_{\varphi}$ is a semi-direct product, elements of $M_{\varphi}$ have a standard form $gt^{k}$ where $g\in G$ and $k\in\mathbb{Z}$ are unique.
	Note that for any $h\in G$ we have $t^{-1}h=h^{t}t^{-1}=(h\varphi)t^{-1}$, thus an element $t^{k}h$ can be written in the alternate standard form $(h\cdot\varphi^{-k})t^{k}$.
\end{rem}

A key observation is that twisted conjugacy is realised as standard conjugacy in the mapping torus and that having $R_{\infty}(\varphi)$ amounts to there being infinitely many conjugacy classes of a certain type in the mapping torus. 

\begin{lemma}
	\label{conjclassesofmappingtorus}
	Let $G$ be a group and $\varphi\in\aut(G)$.
	Then $G$ has $R_{\infty}(\varphi)$ if and only if the set $\{ tx \ | \ x\in G \}$ has infinitely many $M_{\varphi}$ conjugacy classes.
\end{lemma}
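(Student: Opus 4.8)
The plan is to show that $x\mapsto tx$ induces a bijection between the $\varphi$-twisted conjugacy classes of $G$ and the $M_\varphi$-conjugacy classes contained in the coset $\{tx:x\in G\}$. Everything reduces to short computations inside the semidirect product $M_\varphi$, using the standard-form identities $gt=t(g\varphi)$ and $tg=(g\varphi^{-1})t$ for $g\in G$ recorded in the remark above.

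First I would check that $\{tx:x\in G\}$ is a union of $M_\varphi$-conjugacy classes, so that counting classes inside it is meaningful. Writing a general element of $M_\varphi$ as $gt^{k}$ with $g\in G$ and $k\in\Z$, an easy induction gives $t^{k}(tx)t^{-k}=t(x\varphi^{-k})$, and then conjugating by $g$ yields
\[
    (gt^{k})(tx)(gt^{k})^{-1}=t\big((g\varphi)(x\varphi^{-k})g^{-1}\big),
\]
which again lies in $\{tx:x\in G\}$. In particular, taking $k=0$ gives $w(tx)w^{-1}=t\big((w\varphi)xw^{-1}\big)$ for $w\in G$.

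Reading off the displayed identity, $tx$ and $ty$ are $M_\varphi$-conjugate if and only if $y=(g\varphi)(x\varphi^{-k})g^{-1}$ for some $g\in G$, $k\in\Z$, i.e.\ if and only if $y\sim_{\varphi}x\varphi^{-k}$ for some $k$. Hence $[x]_{\varphi}\mapsto[tx]_{M_\varphi}$ is surjective onto the conjugacy classes meeting $\{tx:x\in G\}$, and its injectivity comes down to the assertion that $x\varphi^{k}\sim_{\varphi}x$ for every $k\in\Z$. This is the one point that genuinely needs an argument rather than bookkeeping: it suffices to prove $x\varphi\sim_{\varphi}x$ and then induct on $|k|$, and for the base case one simply takes $w=x^{-1}$, for which $(w\varphi)(x\varphi)w^{-1}=(x\varphi)^{-1}(x\varphi)x=x$.

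Combining these observations, $[x]_{\varphi}\mapsto[tx]_{M_\varphi}$ is a bijection between the $\varphi$-twisted conjugacy classes of $G$ and the $M_\varphi$-conjugacy classes contained in $\{tx:x\in G\}$, so one of these sets is infinite exactly when the other is; this is the claimed equivalence. I do not anticipate a real obstacle here — the only subtlety is that conjugating $tx$ by $t$ replaces $x$ by $x\varphi^{-1}$, so one must remember to verify that this does not change the twisted conjugacy class, which is precisely the computation with $w=x^{-1}$ above.
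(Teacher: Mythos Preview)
Your argument is correct, and it establishes the same bijection as the paper, but the paper handles the $t^{k}$-part of a general conjugator more slickly. Rather than computing $(gt^{k})(tx)(gt^{k})^{-1}$ directly and then proving the auxiliary fact $x\varphi^{-k}\sim_{\varphi}x$, the paper observes that any $u=gt^{k}\in M_{\varphi}$ can be rewritten as $u=(tx)^{k}h$ for some $h\in G$; since $(tx)^{k}$ commutes with $tx$, this reduces conjugation by $u$ to conjugation by an element of $G$ in one stroke, and the equivalence with $\sim_{\varphi}$ follows immediately from your $k=0$ formula. Your route trades that rewriting trick for the explicit check that $x\varphi\sim_{\varphi}x$ (via $w=x^{-1}$) plus an induction on $|k|$; this is slightly longer but has the virtue of making the role of $\varphi$-invariance of twisted conjugacy classes transparent.
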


\begin{proof}
	Let $x,y\in G$.
	We have that $tx$ and $ty$ are conjugate in $M_{\varphi}$ if and only if there exists $u=gt^{k}\in M_{\varphi}$ so that $ty=(tx)^{u}=u^{-1}(tx)u$.
	Observe that given $x\in G$ we can always find $h\in G$ so that $u=gt^{k}=(tx)^{k}h$.
	Then $(tx)^{u}=(tx)^{(tx)^{k}h}=(tx)^{h}$.
	Now for any $x,y\in G$, we have:
	\begin{align*}
		&	tx \text{ and } ty \text{ are conjugate in } M_{\varphi}	\\
		\Longleftrightarrow 	&	\exists h\in G \text{ with } ty=(tx)^{h}=h^{-1}txh=t(h^{-1}\varphi)xh	\\
		\Longleftrightarrow 	&	\exists \hat{h}\in G \text{ with } y=(\hat{h}\varphi)x\hat{h}^{-1}	\\
		\Longleftrightarrow	&	y\sim_{\varphi} x 	.
	\end{align*}
\end{proof}


\section{Trees and Group Actions}

\subsection{Simplicial trees and  \texorpdfstring{$\R$-\text{trees}}  \  }				

We refer the reader to \cite{Culler1987} for a treatment on $\R$-trees and to \cite{Chiswell2001} for a more general textbook on $\Lambda$-trees. The definitions and results here are mainly based on \cite{Culler1987} and \cite{Chiswell2001}. We note that in the case $\Lambda= \R$ these concepts coincide, whereas the case $\Lambda = \Z$ corresponds to the case of a simplicial tree below.  

\begin{defn}
	A {simplicial graph}, $\Gamma$, is a 4-tuple, $(V, E, \sigma, \tau)$ where:
	\begin{itemize}
		\item $V$ is a set called the {vertices} of $T$,
		\item $E \subseteq V \times V$ is the set of {oriented edges} of the graph,
		\item $\sigma: E \to V$ and $\tau : E \to V$ are {incidence maps}, defined by $\sigma(u,v) = u$ and $\tau(u,v) = v$.
	\end{itemize}
	Moreover, for every $e = (u,v)\in E$  we always have that $(v,u) \in E$; we call this the {inverse edge}, denoted, $\overline{e}$. 
\end{defn}

\begin{defn}
	An {edge path} in a graph, $\Gamma =(V, E, \sigma, \tau) $ is a sequence of vertices, $v_0, v_1, \ldots, v_k$ where for each $0 \leq i \leq k-1$, $(v_i, v_{i+1}) \in E$. We allow the edge path to consist of a single vertex, $v_0$. 
	
	\begin{itemize}
		\item The edge path is called {trivial} when it consists of a single vertex, and {non-trivial} otherwise.
		\item An edge path $v_0, v_1, \ldots, v_k$ is said to start at $v_0$ and end at $v_k$.
		\item A non-trivial edge path may also be described as a sequence of edges, $e_0 \ldots e_{k}$, where $\tau(e_i) = \sigma(e_{i+1})$ for $0 \leq i \leq k-1$. 
		\item An edge path, $e_0 \ldots e_{k}$ is called {reduced} if, for all $0 \leq i \leq k-1$, $e_i \neq  \overline{{e_{i+1}}}$.  A trivial path is always considered reduced.
	\end{itemize}
\end{defn}

\begin{defn}
	A {simplicial tree} is a simplicial graph where between any two vertices there is a unique reduced edge path starting at one and ending at the other. 
    
    \noindent
    For a tree, we let $[u,v]$ denote the unique reduced edge path from $u$ to $v$; this is called the segment from $u$ to $v$. 
\end{defn}

\begin{defn}[Culler--Morgan \cite{Culler1987}]\label{defn R-tree}
	An {$\mathbb{R}$-tree} $T$ is a path-connected non-empty metric space so that for any points $x,y\in T$, there is a unique arc $[x,y]\subseteq T$ joining $x$ and $y$, which is isometric to the interval $[0,d(x,y)]\subseteq\mathbb{R}$ (where $d$ is the metric on $T$).
	
	Equivalently, an $\mathbb{R}$-tree is a 0-hyperbolic geodesic metric space, in the sense of Gromov.
\end{defn}

\begin{defn}
	Given two points, $u,v$ in an $\R$-tree $T$, we denote by $[u,v]$ the unique geodesic from $u$ to $v$ in $T$. This is called the {segment} from $u$ to $v$.
\end{defn}

\begin{defn}\label{defn directions and  branch points}
	Given an $\mathbb{R}$-tree $T$ and a point $x\in T$, a {direction} at $x$ is a connected component of $T-\{x\}$.
	We say a point $x\in T$ is a {branch point} if there are at least three directions at $x$.
\end{defn} 

\begin{defn}
	
	\ 
	
	\begin{itemize}
		\item We say that an $\R$-tree is a {metric simplicial tree} (or simply a simplicial $\R$-tree) if the set of branch points is a discrete subset of the tree. 
		\item If $T$ is a simplicial $\R$-tree and $V$ a discrete subset of $T$ which includes all the branch points (but may include more points), then the edges are all the segments $[u,v]$ between elements of $V$ where $[u,v] \cap V = \{ u,v\}$. Any non-trivial segment between vertices may then be given as an edge-path, $e_1 \ldots e_k$ where each $e_i$ is an edge. 
		\item Conversely, a simplicial tree $T$ given as a set of vertices and edges may be made into a simplicial $\R$-tree by assigning a positive length to each edge and making $T$ into a metric space via the corresponding path metric. 
	\end{itemize}
\end{defn}

\begin{defn}
	Let $T$ be an $\R$-tree. Then for any three points, $u,v,w \in T$, the $Y$-point, $Y(u,v,w)=y \in T$, is given by: 
	$$
	[u,v] \cap [u,w] = [u,y]. 
	$$
\end{defn}

\begin{lemma}[ {\cite[Chapter 2, Lemma 1.2]{Chiswell2001}}]
	Let $T$ be an $\R$-tree. Then for any three points, $u,v,w \in T$, the $Y$-point is unique and does not depend on the order of the points. 
	Moreover, $Y(u,v,w)$ is the point on $[u,v]$ whose distance from $u$ is given by the Gromov product, $(v.w)_u = \frac{1}{2}(d_T(u,v) + d_T(u,w) - d_T(v,w))$. 
\end{lemma}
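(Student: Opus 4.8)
The plan is to identify the intersection $[u,v]\cap[u,w]$ explicitly as a segment $[u,y]$, show that $y$ is the centre of a tripod on $u,v,w$, and then read off the order-independence and the Gromov product formula.

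First I would check that $[u,v]\cap[u,w]$ is indeed of the form $[u,y]$ for a unique $y$. Since $[u,v]$ and $[u,w]$ are isometric to compact real intervals they are compact, so their intersection $I$ is compact and contains $u$; let $a=\max\{d(u,z):z\in I\}$, attained at the (unique) point $y\in[u,v]$ with $d(u,y)=a$. Because $y\in[u,w]$ as well, the sub-arc of $[u,w]$ from $u$ to $y$ is an arc from $u$ to $y$ and hence, by uniqueness of arcs in an $\R$-tree, equals $[u,y]$; thus $[u,y]\subseteq I$, and the reverse inclusion is immediate from the choice of $a$. Uniqueness of $y$ is clear since $[u,v]$ is isometric to an interval, and the identity $Y(u,v,w)=Y(u,w,v)$ is built into the definition.

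The geometric heart of the proof is the claim that $[y,v]\cap[y,w]=\{y\}$ and $[v,w]=[y,v]\cup[y,w]$. For the first assertion, if $z\in[y,v]\cap[y,w]$ then $z\in[u,v]\cap[u,w]=[u,y]$ (using $y\in[u,v]$ and $y\in[u,w]$), while $[u,y]$ and $[y,v]$ are the two halves of the arc $[u,v]$ split at $y$, so they meet only in $y$; hence $z=y$. It follows that $[y,v]\cup[y,w]$ is an injective path, so an arc, from $v$ to $w$, and by unique arcwise connectedness of the $\R$-tree this arc must be $[v,w]$. From here order-independence follows: $[v,u]=[v,y]\cup[y,u]$ and $[v,w]=[v,y]\cup[y,w]$, and the same reasoning applied to the arc $[u,w]$ (split at $y$ into $[u,y]$ and $[y,w]$) shows $[y,u]\cap[y,w]=\{y\}$, whence $[v,u]\cap[v,w]=[v,y]$, i.e. $Y(v,u,w)=y$; the remaining permutations then follow by composing the two transpositions already handled.

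The Gromov product formula is then a one-line computation. With $a=d(u,y)$, the relations $d(u,v)=a+d(y,v)$ and $d(u,w)=a+d(y,w)$ hold because $y\in[u,v]\cap[u,w]$, while $d(v,w)=d(v,y)+d(y,w)$ by the claim; adding the first two and subtracting the third gives $d(u,v)+d(u,w)-d(v,w)=2a$, so $a=(v.w)_u$. The only step that requires genuine care is the claim that $[y,v]\cup[y,w]$ is exactly the geodesic $[v,w]$; once the basic properties of $\R$-trees (compactness of segments, uniqueness of arcs, unique arcwise connectedness) are in place, everything else is bookkeeping.
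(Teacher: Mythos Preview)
The paper does not give its own proof of this lemma; it simply cites \cite[Chapter 2, Lemma 1.2]{Chiswell2001} and moves on. Your argument is correct and is essentially the standard proof found in that reference: identify $y$ as the far endpoint of $[u,v]\cap[u,w]$, show the tripod relation $[v,w]=[v,y]\cup[y,w]$ via uniqueness of arcs, and read off symmetry and the Gromov product from the resulting three distance equations. There is nothing to compare and no gap to flag.
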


\subsection{Group actions on  \texorpdfstring{$\R$} \ -trees}			

Throughout this subsection, we will consider a group, $G$, acting isometrically on an $\R$-tree, $T$. We note that if one starts with a simplicial tree and a group action sending vertices to vertices and edges to edges then the process of making this a simplicial $\R$-tree described above allows one to extend the group action to an isometric action; that is, a group of automorphisms of a tree preserves the induced path metric. 

\begin{defn} [{\cite[p. 576 and Definition 1.4]{Culler1987}}] \label{defn translation length}
Suppose that $G$ is a group acting isometrically on an $\R$-tree, $T$. 
	\begin{enumerate}[(i)]
		\item For any $g \in G$ we define the {translation length} of $g$ (with respect to $T$) to be, 
		$$
		\| g\|_T := \inf_{x \in T} \{ d_T(x,xg)  \}.
		$$
		We write $\|g\|$ for $\|g\|_T$ if $T$ is understood. 
		\item Define the characteristic set of $g$ to be, 
		$$
		A_g = \{ x \in T \  : \ d_T(x, xg) = \| g \| \}. 
		$$
		\item $g \in G$ is called ellipic if $\|g\|_T=0$ and hyperbolic if $\|g\|_{T} > 0$. 
	\end{enumerate}
\end{defn}

\begin{lemma}[{\cite[Lemma 1.3 (p.576)]{Culler1987}}]
	\label{treefacts}	Let $G$ act isometrically on an $\R$-tree, $T$ and let $g \in G$. 
	
	\begin{enumerate}[(i)]
		\item There exists an $x \in T$ such that $d_T(x,xg) = \| g \|$. That is, the infimum in Definition \ref{defn translation length}(i) is a minimum . 
		\item The set $A_g = \{ x \in T \ :d_T(x, xg) = \| g \| \ \}$ is non-empty (by the previous part). It is also a closed subtree of $T$, invariant under the action of $g$. 
		\item If $g$ is elliptic, then $A_g$ is the fixed point set  of $g$, $\fix(g)$. 
		\item If $g$ is hyperbolic, then $A_g$ is called the axis of $g$ and is isometric to the real line. It is the smallest $g$-invariant subtree of $T$. The element $g$ acts on $A_g$ as a translation by the real number $\|g \|$. 
        \item If $g$ is hyperbolic and $0 \neq n \in \Z$, then $A_{g^n} = A_g$ and $\| g^n \| = |n| \|g\|$. 
	\end{enumerate}
\end{lemma}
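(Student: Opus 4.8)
The statement is classical, essentially Culler--Morgan's Lemma~1.3, so in the paper itself a reference to \cite{Culler1987} suffices; but here is the route I would take to prove it from scratch. The first move is to split into the \emph{elliptic} and \emph{hyperbolic} cases according to whether $\fix(g)=\emptyset$. If some point is fixed, everything is immediate: $\|g\|=0$ and the infimum is attained (translation lengths being non-negative); $A_g$ is exactly $\fix(g)$, since $d_T(x,xg)=0$ forces $xg=x$; and $\fix(g)$ is a closed subtree, closed because $x\mapsto d_T(x,xg)$ is $2$-Lipschitz hence continuous, and convex because an isometry fixing the two endpoints of a segment fixes it pointwise (uniqueness of arcs in an $\R$-tree). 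This disposes of (i), (ii), (iii) for elliptic elements.

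The heart of the matter is the hyperbolic case, and the key technical lemma I would isolate is the following. For $x\in T$ write $\delta(x)=d_T(x,xg)$ and let $\epsilon(x)$ be the length of the overlap $[x,xg]\cap[x,xg^{-1}]$; equivalently $\epsilon(x)=\tfrac12\bigl(d_T(xg^{-1},x)+d_T(x,xg)-d_T(xg^{-1},xg)\bigr)$ is a Gromov product, so that $x':=Y(x,xg,xg^{-1})$ lies on $[x,xg]$ at distance $\epsilon(x)$ from $x$. A short computation with the four points $xg^{-1},x,xg,xg^2$ shows that if $\epsilon(x)\ge\tfrac12\delta(x)$ for even one point $x$, then the midpoint of $[x,xg]$ lies on both $[x,xg]$ and $[xg,xg^2]$ at distance $\tfrac12\delta(x)$ from $xg$, as does its image under $g$, whence by uniqueness of arcs it is fixed by $g$. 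Hence for hyperbolic $g$ we have $\epsilon(x)<\tfrac12\delta(x)$ for all $x$. A second computation shows that at the point $x'$ the directions towards $x'g$ and towards $x'g^{-1}$ are distinct, i.e. $\epsilon(x')=0$. Consequently the consecutive segments $[x'g^{n},x'g^{n+1}]$ meet only at their common endpoints, so $L:=\bigcup_{n\in\Z}[x'g^{n},x'g^{n+1}]$ is a bi-infinite geodesic, invariant under $g$, on which $g$ acts as a translation by $\lambda:=d_T(x',x'g)=\delta(x)-2\epsilon(x)>0$.

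To finish the hyperbolic case I would use the nearest-point projection $\pi\colon T\to L$; since $g$ is an isometry preserving $L$, $\pi$ commutes with $g$, and for any $y\in T$ one gets $d_T(y,yg)=2\,d_T(y,\pi(y))+d_T(\pi(y),\pi(y)g)=2\,d_T(y,\pi(y))+\lambda$. Hence $\|g\|=\lambda$, the infimum is attained exactly on $L$, and $A_g=L$: this gives (i), (ii), and the first assertions of (iv) (an axis isometric to $\R$ on which $g$ translates by $\|g\|$, closed and $g$-invariant). That $L$ is the smallest $g$-invariant subtree follows because any $g$-invariant subtree $T'$ containing a point $y$ also contains $[y,yg]\ni\pi(y)$, hence contains every $\pi(y)g^{n}$, hence contains $L$. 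Part (v) is then a formality: for $n\ge 1$ the element $g^{n}$ preserves $L$ and translates it by $n\lambda>0$, so $g^{n}$ is hyperbolic with the same line as axis (using the uniqueness/minimality just established), giving $A_{g^{n}}=A_g$ and $\|g^{n}\|=n\|g\|$; the case $n\le -1$ reduces to this since $A_{g^{-1}}=A_g$ and $\|g^{-1}\|=\|g\|$.

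The main obstacle is the self-contained verification of the two short tree-geometry computations in the second paragraph — that $\epsilon(x)\ge\tfrac12\delta(x)$ forces a fixed point, and that $Y(x,xg,xg^{-1})$ already lies on an invariant axis. Both become elementary once one draws the configuration of $xg^{-1},x,xg,xg^{2}$ and tracks where $g$ sends the relevant overlaps, but they must be carried out carefully; everything afterwards (the projection identity, closedness, the subtree and minimality claims, and part (v)) is routine.
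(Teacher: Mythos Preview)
Your proposal is correct, and you are right that the paper does not prove this lemma at all: it simply records it with the citation to \cite{Culler1987}. Your sketch follows the standard route found in Culler--Morgan and in Chiswell \cite{Chiswell2001} (the paper cites the latter for the companion Lemma~\ref{treefacts2}), namely: reduce to whether $g$ has a fixed point; in the fixed-point-free case use the tripod $xg^{-1},x,xg$ to locate a point $x'$ at which the directions to $x'g$ and $x'g^{-1}$ separate; concatenate the translates $[x'g^{n},x'g^{n+1}]$ into a bi-infinite geodesic; and finish with the nearest-point projection formula $d_T(y,yg)=2\,d_T(y,\pi(y))+\lambda$. One small remark: your case split is phrased in terms of $\fix(g)=\emptyset$ rather than $\|g\|=0$, which is a priori different from the paper's Definition~\ref{defn translation length}; but your argument in the second paragraph shows the two notions coincide (no fixed point $\Rightarrow$ $\|g\|=\lambda>0$), so this is harmless.
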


\begin{lemma}\label{treefacts2}
Let $G$ act isometrically on an $\R$-tree $T$. Let $g \in G$ and $x \in T$ any point.
    \begin{enumerate}[(i)]
        \item The midpoint of the segment $[x,xg]$ lies in $A_g$. 
		\item For any hyperbolic $g \in G$, we have $Y(xg^{-1}, x, xg) \in A_g$.  
    \end{enumerate}
\end{lemma}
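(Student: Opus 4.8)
The plan is to derive both parts from a single structural fact about the action, which I will call the \emph{bridge lemma}. Fix $g$ and an arbitrary $x \in T$, let $p$ be the nearest-point projection of $x$ onto the closed $g$-invariant subtree $A_g$ of Lemma~\ref{treefacts}(ii), and put $r := d(x,p) = d(x, A_g)$. Then $pg$ is the nearest-point projection of $xg$ onto $A_g$, one has $d(xg, pg) = r$ and $d(p, pg) = \|g\|$, and
$$[x, xg] \;=\; [x,p] \cup [p, pg] \cup [pg, xg]$$
is a geodesic concatenation; in particular $d(x, xg) = 2r + \|g\|$. The ingredients of the proof are all from Lemma~\ref{treefacts}: $pg$ projects $xg$ because $g$ is an isometry and $A_g$ is $g$-invariant; $[x,p]\cap A_g = \{p\}$ and $[pg, xg] \cap A_g = \{pg\}$ by minimality of $r$; $[p,pg]\subseteq A_g$ because $A_g$ is a subtree; and $d(p, pg) = \|g\|$ because $g$ fixes $p$ when $g$ is elliptic (so $pg = p$, using Lemma~\ref{treefacts}(iii)) and translates its axis by $\|g\|$ when $g$ is hyperbolic (Lemma~\ref{treefacts}(iv)).

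The step I expect to be the main obstacle is verifying that the displayed concatenation really is geodesic, i.e.\ that no backtracking occurs at $p$ or $pg$. When $g$ is hyperbolic this is clear, since $[p,pg]$ is a non-degenerate sub-segment of the axis and hence separates the two ``hairs'' $[x,p]$ and $[pg,xg]$ into distinct directions at $p$ and at $pg$. When $g$ is elliptic we have $p = pg$ and must instead show that $[x,p]$ and $[xg,p]$ leave $p$ in different directions. I would argue by contradiction: if they shared an initial sub-segment, its endpoint would be a point $y \neq p$ equal to $Y(x,xg,p)$, so $y \in [x,p]$ and therefore $y \notin A_g = \fix(g)$; but applying $g$ and using $pg = p$ gives $yg = Y(xg,xg^2,p)$, which lies on $[xg,p]$ at distance $d(y,p)$ from $p$, as does $y$ itself, and since that point of the segment $[xg,p]$ is unique we get $y = yg \in \fix(g)$, a contradiction.

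Granting the bridge lemma, part (i) is immediate: the midpoint $m$ of $[x,xg]$ lies at distance $\tfrac12 d(x,xg) = r + \tfrac12\|g\|$ from $x$ along the concatenation, hence on the middle segment $[p,pg] \subseteq A_g$.

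For part (ii), with $g$ hyperbolic, apply the bridge lemma simultaneously to $[x,xg]$ and to $[xg^{-1},x] = [x,xg]g^{-1} = [xg^{-1}, pg^{-1}] \cup [pg^{-1}, p] \cup [p,x]$. Concatenating $xg^{-1}\to x$ with $x \to xg$, the two oppositely traversed copies of the hair $[p,x]$ cancel; since $\|g\| > 0$ the surviving path $[xg^{-1},pg^{-1}]\cup[pg^{-1},p]\cup[p,pg]\cup[pg,xg]$ has no backtracking (the ``turns'' at $pg^{-1}$ and $pg$ are handled as in the hyperbolic case above, and at $p$ the two axis sub-segments $[pg^{-1},p]$ and $[p,pg]$ concatenate without backtracking because $g$ translates the axis). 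Hence this path is $[xg^{-1},xg]$, the point $x$ hangs off it at $p$, and so $Y(xg^{-1},x,xg)$ --- which is by definition the projection of $x$ onto $[xg^{-1},xg]$ --- equals $p \in A_g$. It is worth noting that hyperbolicity is genuinely needed here: for elliptic $g$ (say $g$ swapping two branches at a fixed point, with $x$ in one of them) the point $Y(xg^{-1},x,xg)$ need not lie in $\fix(g) = A_g$.
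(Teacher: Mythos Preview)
Your argument is correct. The paper does not actually prove this lemma: it simply cites Chiswell's textbook (Chapter~3, Lemma~1.1 for the elliptic case and Theorem~1.4 for the hyperbolic case) for both parts. Your bridge-lemma decomposition $[x,xg] = [x,p]\cup[p,pg]\cup[pg,xg]$, with $p$ the nearest-point projection of $x$ onto $A_g$, is precisely the structural fact underlying those cited results, so you have supplied a self-contained argument where the paper defers to the literature. The contradiction in the elliptic case (showing that $y = Y(x,xg,p)$, if distinct from $p$, would be forced to satisfy $yg = y$ by comparing positions on $[xg,p]$) is clean and correct, as is the concatenation-and-cancellation argument for part~(ii). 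Your closing remark that~(ii) genuinely requires hyperbolicity is a nice observation not made in the paper.
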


\begin{proof}
    \begin{enumerate}[(i)]
        \item This is just \cite[Chapter 3 (Lemma 1.1 for the elliptic case and Theorem 1.4 for the hyperbolic case)]{Chiswell2001} . 
		\item This is \cite[Chapter 3, Theorem 1.4]{Chiswell2001}.
    \end{enumerate}
\end{proof}

\begin{obs}
\label{commute}
    It follows easily that $A_{g^h} = A_g \cdot h$. Hence if $g$ and $h$ commute then $h$ preserves $A_g$. Further, if $g$ and $h$ commute and are both hyperbolic then $A_g=A_h$, since $h$ preserves the line $A_g$ but $A_h$ is the smallest $h$-invariant subtree of $T$. 
\end{obs}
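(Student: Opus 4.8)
The plan is to derive everything from the single fact that $G$ acts on $T$ by isometries, which is what makes translation lengths and characteristic sets transform equivariantly. First I would prove the identity $A_{g^h}=A_g\cdot h$ straight from the definitions. Fix $x\in T$ and rewrite the displacement of $x$ under $g^h=h^{-1}gh$ using the right action: $x\cdot g^h=\big((x h^{-1})g\big)h$, so applying the isometry $h^{-1}$ to both endpoints gives $d_T\big(x,\,x\cdot g^h\big)=d_T\big(x h^{-1},\,(x h^{-1})g\big)$. Taking the infimum over $x\in T$ (equivalently, over $y=xh^{-1}\in T$) yields $\|g^h\|_T=\|g\|_T$, and comparing the two displays shows $x\in A_{g^h}$ if and only if $xh^{-1}\in A_g$, i.e. $A_{g^h}=A_g\cdot h$.

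The second assertion is then immediate: if $g$ and $h$ commute then $g^h=g$, hence $A_g=A_{g^h}=A_g\cdot h$, so $h$ preserves $A_g$. For the third assertion I would assume in addition that both $g$ and $h$ are hyperbolic. By Lemma~\ref{treefacts}(iv), $A_g$ is the smallest $g$-invariant subtree of $T$ and $A_h$ the smallest $h$-invariant subtree. Applying the first identity with the roles of $g$ and $h$ swapped, commutativity gives that $g$ preserves $A_h$; since $A_h$ is a closed subtree invariant under $g$, minimality of $A_g$ forces $A_g\subseteq A_h$, and the symmetric argument gives $A_h\subseteq A_g$, whence $A_g=A_h$. (Equivalently, as in the statement: $h$ preserves the line $A_g$, and $A_h$ is the minimal $h$-invariant subtree, so $A_h\subseteq A_g$; being a closed subtree of $A_g\cong\R$ that is itself isometric to $\R$, it must be all of $A_g$.)

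The computations are entirely routine; the only place demanding a little care is the bookkeeping with the right action of $G$ when manipulating $d_T(x,\,x\cdot g^h)$, since one must apply the correct power of $h$ on the correct side before using the isometry property. I do not expect any genuine obstacle here — this is exactly the kind of statement that ``follows easily'' once equivariance of translation length is invoked.
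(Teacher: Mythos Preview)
Your proposal is correct and follows essentially the same approach as the paper, which gives the argument inline in the observation itself (invoking equivariance for $A_{g^h}=A_g\cdot h$, then minimality of $A_h$ together with the fact that $A_g$ is a line to conclude $A_g=A_h$). You have simply supplied the routine details the paper omits.
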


\begin{defn}
	\label{length function}
	Let $G$ act isometrically on an $\R$-tree, $T$. Then the length function of this action is the function, $l: G \to \R$ given by $l(g) = \| g \|_T$. This is also called the translation length function. 
\end{defn}

\begin{defn}[Culler--Morgan \cite{Culler1987}]

\label{properties of R-trees}
	Let $T$ be an $\mathbb{R}$-tree equipped with an action of a group $G$ by isometries. We say that $T$ is:
	\begin{itemize}
		\item {irreducible}, if there is no point, line or end of $T$ which is invariant under the action of $G$. Equivalently, there exist a pair of groups elements, $g, h \in G$ which are hyperbolic and whose axes meet in an arc of finite positive length (See \cite[Theorem 2.7]{Culler1987} for this equivalence).
		\item {minimal}, if there is no proper $G$-invariant subtree of $T$.
		\item {non-trivial}, if there is no global fixed point (i.e. no $x\in T$ such that $x\cdot G=x$).
		\item {small}, if for any non-trivial arc $[x,y]$, the pointwise stabiliser $\stab([x,y]) = \{ g \in G \ : \ zg = z, \  \forall z \in [x,y]\} \le G$ does not contain a free subgroup of rank 2. 
	\end{itemize}
\end{defn}
\begin{rem}
	It is also true that the action of $G$ on $T$ is irreducible if and only if there are a pair of isometries whose axes are disjoint \cite[Lemmas 2.1 and 1.5 and 1.6]{Culler1987}. Concretely, if $g, h$ are hyperbolic and $A_g \cap A_h$ meets in a segment of finite poisitive length then, for some large $n \in \Z$, the axes of $g$ and $g^{h^n}$ will be disjoint.  
\end{rem}

We shall also need the following. 

\begin{lemma}[Culler-Morgan \cite{Culler1987}, 1.5 and 1.6]
\label{disjoint axes}
    Let $g,h$ be hyperbolic isometries of an $\R$-tree $T$ whose axes $A_g, A_h$ are disjoint. Then $\|gh\|= \|hg \|=\|g\|+\|h\| + 2d(A_g,A_h)$. In particular $gh$ and $hg$ are hyperbolic. 
\end{lemma}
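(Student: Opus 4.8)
The plan is to work entirely inside the metric $\R$-tree $T$, exploiting the basic geometry of axes of hyperbolic isometries and the uniqueness of segments. Let $g,h$ be hyperbolic with disjoint axes $A_g$, $A_h$, and let $[p,q]$ be the bridge between them, i.e.\ the unique segment realising $d(A_g,A_h)$ with $p\in A_g$, $q\in A_h$; write $\delta=d(A_g,A_h)=d(p,q)>0$. The first step is to understand where $p$ and $q$ move under the two isometries: $pg\in A_g$ with $d(p,pg)=\|g\|$, and $qh\in A_h$ with $d(q,qh)=\|h\|$. The key combinatorial observation is that the segment $[q,p]$ followed by $[p,pg]$ is already reduced inside $T$ — the bridge $[p,q]$ meets $A_g$ only in $p$ (otherwise the axes would come closer), so these two segments do not backtrack — and similarly on the $h$ side. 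Iterating, I would show that the concatenation
\[
[qh^{-1}\text{-side point}]\ \cup\ [q,p]\ \cup\ [p, pg]\ \cup\ g\text{-translate of the bridge}\ \cup\ \dots
\]
is geodesic; concretely, I will exhibit an explicit point, namely $x=q$ (the endpoint of the bridge on $A_h$), and compute $d(x, x\cdot gh)$ directly.

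First I would track the point $q$ under $gh$ (recall our actions are on the right). We have $q\cdot g$: since $q\notin A_g$, the segment $[q,qg]$ passes through the closest-point projection of $q$ onto $A_g$, which is $p$, then runs along $A_g$ a distance $\|g\|$, then comes back out a bridge-length $\delta$; so $d(q,qg)=2\delta+\|g\|$ and $qg$ sits "on the far side'' of $A_g$. Then applying $h$ to $qg$: one checks $qg$ projects onto $A_h$ at the point $q$ again (because the segment from $qg$ back towards $A_h$ re-traces the bridge), so $[qg, qgh]$ has the form (bridge of length $\delta$ from $qg$ to $q$) followed by (a length-$\|h\|$ segment along $A_h$). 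Assembling $[q, qg]\cup[qg,qgh]$ and verifying there is no cancellation at the junction $qg$ — which holds precisely because the incoming and outgoing directions at $qg$ are the two ends of $A_g$-translate geometry, which are distinct — gives a reduced path, hence
\[
d(q, q\cdot gh)=d(q,qg)+d(qg,qgh)=(2\delta+\|g\|)+(\delta+\|h\|)
\]
minus a correction: more carefully the overlap along $A_h$ must be examined, and the clean statement is that the midpoint of $[q,q\cdot gh]$ is the center of a "tripod'' and $\|gh\|=d(q,q\cdot gh)$ exactly when $q\in A_{gh}$. So the real content is to prove $q$ (or a nearby canonical point) lies on $A_{gh}$, which I would do by showing $d(q, q\cdot (gh))$ cannot be decreased: for any $x\in T$, $[x, x\cdot gh]$ must cross the bridge region, and a short case analysis on the position of $x$ relative to $A_g\cup[p,q]\cup A_h$ shows $d(x,x\cdot gh)\ge \|g\|+\|h\|+2\delta$.

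For the ``in particular'' clause, once the formula $\|gh\|=\|g\|+\|h\|+2\delta>0$ is established, hyperbolicity of $gh$ is immediate from Definition~\ref{defn translation length}(iii); the identical argument with the roles swapped (or conjugating, since $hg=(gh)^{h}$ and translation length is a conjugacy invariant by Observation~\ref{commute}-type reasoning) gives $\|hg\|=\|g\|+\|h\|+2\delta$ as well, so $hg$ is hyperbolic too.

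The main obstacle I anticipate is the bookkeeping in the no-cancellation claims at the junction points $qg$ (and $qgh^{-1}$, etc.): one must be careful that applying $g$ to the bridge produces a segment emanating from the ``correct'' end of $A_g$ so that $[q,qg]$ and $[qg,qgh]$ concatenate without backtracking, and this uses that $g$ is a translation along the line $A_g$ (Lemma~\ref{treefacts}(iv)) together with disjointness of $A_g$ and $A_h$. This is exactly the kind of elementary-but-fiddly $\R$-tree computation that Culler--Morgan carry out, so in the write-up I would either reproduce it with a labelled picture or simply cite \cite[1.5 and 1.6]{Culler1987} as the statement already does, using the above as the guiding sketch.
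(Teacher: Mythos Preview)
The paper does not prove this lemma at all: it is stated with attribution to Culler--Morgan \cite[1.5 and 1.6]{Culler1987} and no proof environment follows. So your final suggestion --- simply cite the reference --- is exactly what the paper does.

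That said, your attempted direct proof contains a genuine computational slip that you half-notice but do not correct. You write that $qg$ projects onto $A_h$ at $q$ (true) and then that $[qg,qgh]$ consists of ``a bridge of length $\delta$ from $qg$ to $q$'' followed by a segment of length $\|h\|$ along $A_h$. This is wrong: the distance from $qg$ to $q$ is $2\delta+\|g\|$, not $\delta$, since the geodesic $[qg,q]$ runs $qg\to pg\to p\to q$. Consequently your concatenation $[q,qg]\cup[qg,qgh]$ \emph{does} backtrack --- both segments leave $qg$ towards $pg$ --- and the full segment $[q,qg]$ cancels against the initial portion of $[qg,qgh]$. After cancellation one is left with the reduced path $q\to qh\to ph\to (pg)h\to (qg)h$, of length $\|h\|+\delta+\|g\|+\delta=\|g\|+\|h\|+2\delta$, which is the desired value of $d(q,q\cdot gh)$. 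The cleaner route, which avoids the cancellation altogether, is to track $p$ rather than $q$: one checks directly that $[p,q,qh,(p)gh,(q)gh]$ is a reduced path, so $p$ and $q$ lie on $A_{gh}$ by the Circle-Dot Lemma (Lemma~\ref{circledot}), and $\|gh\|=d(p,p\cdot gh)=\|g\|+\|h\|+2\delta$. Your observation that $\|hg\|=\|gh\|$ because $hg$ is conjugate to $gh$ is correct and is the right way to finish.
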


\begin{prop}[{\cite[Proposition 3.1]{Culler1987}}]
\label{minimal subtree}
   If $G$ acts on an $\R$-tree $T$ and some element $g \in G$ acts hyperbolically, then $T$ admits a unique, minimal $G$-invariant subtree. This subtree is exactly the union of all the hyperbolic axes of  elements of $G$. 
\end{prop}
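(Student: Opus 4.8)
The plan is to let $T_{0}\subseteq T$ be the union $\bigcup A_{h}$ taken over all hyperbolic $h\in G$, and to prove two things: that $T_{0}$ is a non-empty $G$-invariant subtree, and that $T_{0}$ is contained in every $G$-invariant subtree. Together these give existence of a minimal $G$-invariant subtree, its uniqueness, and the explicit description as the union of axes. Note that the hypothesis that some element of $G$ is hyperbolic is used exactly to guarantee $T_{0}\neq\emptyset$ (any single axis $A_{g}\subseteq T_{0}$ already witnesses this).

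First I would verify $G$-invariance. By Observation~\ref{commute} we have $A_{g^{h}}=A_{g}\cdot h$ for all $g,h\in G$, and a conjugate of a hyperbolic element is hyperbolic (it has the same, positive, translation length). Hence for hyperbolic $g$ and arbitrary $h\in G$, the set $A_{g}\cdot h=A_{h^{-1}gh}$ is again an axis of a hyperbolic element, so $A_{g}\cdot h\subseteq T_{0}$. Thus $T_{0}h\subseteq T_{0}$ for every $h\in G$, and applying this with $h^{-1}$ in place of $h$ gives $T_{0}h=T_{0}$.

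The key step is connectedness of $T_{0}$, i.e.\ that it really is a subtree. Given hyperbolic $g,h\in G$, I claim $A_{g}\cup A_{h}$ is contained in a connected subset of $T_{0}$. If $A_{g}\cap A_{h}\neq\emptyset$ this is clear since each axis is connected. If $A_{g}\cap A_{h}=\emptyset$, let $[p,q]$ be the bridge between the two lines, with $p\in A_{g}$ and $q\in A_{h}$; in an $\R$-tree the gate of $A_{h}$ as seen from any point of $A_{g}$ is $q$ and vice versa. By Lemma~\ref{disjoint axes}, $gh$ is hyperbolic with $\|gh\|=\|g\|+\|h\|+2\,d(A_{g},A_{h})$, and a direct computation using the gate property shows $d(p,p(gh))$ equals this same number, so $p\in A_{gh}$; similarly a sub-segment of $A_{h}$ adjacent to $q$ lies on $A_{gh}$ (this is also part of the standard analysis behind Lemma~\ref{disjoint axes}). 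Hence $A_{g}\cup A_{gh}\cup A_{h}\subseteq T_{0}$ is connected and contains $A_{g}\cup A_{h}$. Since every point of $T_{0}$ lies on some hyperbolic axis, it follows that $T_{0}$ is path-connected, hence a subtree of $T$.

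Finally I would prove $T_{0}\subseteq T'$ for every $G$-invariant subtree $T'$, which yields minimality and uniqueness simultaneously. Let $h\in G$ be hyperbolic and pick any $x\in T'$; by geodesic convexity of subtrees $[x,xh]\subseteq T'$, and by Lemma~\ref{treefacts2}(i) the midpoint of $[x,xh]$ lies in $A_{h}$, so $A_{h}\cap T'\neq\emptyset$. Now $A_{h}\cap T'$ is a non-empty subtree of the line $A_{h}$, hence an interval, and it is invariant under the translation $h$ of amplitude $\|h\|>0$ (since $h$ preserves both $A_{h}$ and $T'$); an interval of $\R$ invariant under a non-trivial translation must be all of $\R$, so $A_{h}\subseteq T'$. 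Taking the union over all hyperbolic $h$ gives $T_{0}\subseteq T'$. Combined with the previous two paragraphs, $T_{0}$ is the unique minimal $G$-invariant subtree and equals the union of all hyperbolic axes. I expect the connectedness step — specifically the verification that the axis of the product $gh$ swallows the bridge between two disjoint axes — to be the only genuine obstacle; the invariance and minimality arguments are short convexity computations.
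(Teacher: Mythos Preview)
The paper does not give its own proof of this proposition; it is simply cited from \cite[Proposition 3.1]{Culler1987}. Your argument is correct and is essentially the standard Culler--Morgan proof: the minimality step via Lemma~\ref{treefacts2}(i) and translation-invariance of a subinterval of the axis is exactly how the original goes, and the connectedness step in the disjoint-axes case---showing that $A_{gh}$ contains the bridge $[p,q]$ and hence links $A_g$ to $A_h$---is the content of \cite[Lemma~1.5]{Culler1987}, which the paper already invokes as Lemma~\ref{disjoint axes}. Your computation that $d(p,p(gh))=\|g\|+\|h\|+2d(A_g,A_h)$ is correct (the geodesic $[p,p(gh)]$ runs $p\to q\to qh\to (pg)h$ regardless of the direction of translation of $g$), so $p\in A_{gh}$ and $[q,qh]\subseteq A_{gh}$, giving the required connection. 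There is no gap.
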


\begin{lemma}[Serre {\cite[Proposition 25 (p.63)]{Serre1980}}]
	\label{Serre lemma}
    If $G$ is finitely generated and acts isometrically on an $\R$-tree, $T$, then the action is non-trivial if and only if there exists a hyperbolic element in $G$. That is, if there exists some $g \in G$ with $\|g\|_{T} > 0$. 
\end{lemma}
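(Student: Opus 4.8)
The plan is to prove the two implications separately; the reverse one is immediate and all the work is in the forward one. If $g\in G$ is hyperbolic then $d_T(x,xg)\ge\|g\|_T>0$ for every $x\in T$, so no point of $T$ is fixed even by $g$, let alone by $G$, and the action is non-trivial. For the converse I would argue contrapositively: assuming every element of $G$ is elliptic, I will produce a point fixed by all of $G$. Note first that, by Lemma~\ref{treefacts}(i), an element is elliptic precisely when it actually fixes a point, and then by Lemma~\ref{treefacts}(ii)--(iii) its fixed set $\fix(g)=A_g$ is a non-empty closed subtree of $T$.

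Fix a finite generating set $g_1,\dots,g_n$ for $G$. The strategy has two steps. First, show that the subtrees $\fix(g_1),\dots,\fix(g_n)$ intersect pairwise. Second, deduce from a Helly-type property that $\bigcap_{i=1}^n\fix(g_i)\ne\emptyset$; any point of this intersection is fixed by each generator and hence by all of $G$, giving the required global fixed point. For the second step I would record the elementary fact that finitely many pairwise-intersecting subtrees of an $\R$-tree have a common point: for three subtrees one takes the $Y$-point of three points, one chosen from each pairwise intersection --- it lies on all three connecting segments, hence (since subtrees are convex) in all three subtrees --- and the general case follows by an induction obtained by intersecting all the subtrees with the last one.

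The crux is the first step, which I would deduce from the sub-lemma: \emph{if $g,h$ are elliptic isometries of $T$ with $\fix(g)\cap\fix(h)=\emptyset$, then $gh$ is hyperbolic.} Granting this, were some $\fix(g_i)\cap\fix(g_j)$ empty, the product $g_ig_j$ would be hyperbolic, contradicting the standing hypothesis that every element of $G$ is elliptic. To prove the sub-lemma, suppose for contradiction that $gh$ is elliptic, hence fixes some $x\in T$; a short check gives $x\notin\fix(g)\cup\fix(h)$ and $xg=xh^{-1}$. The geometric input I would establish (via a $Y$-point argument, together with the elementary existence of nearest-point projections onto closed subtrees) is: for any elliptic $g$ and any $x\in T$, if $a$ denotes the point of $\fix(g)$ nearest to $x$, then $a\in[x,xg]$ and $d_T(x,xg)=2\,d_T(x,a)$ --- indeed, if the $Y$-point $Y(x,xg,a)$ were some $z\ne a$, then $z$ and $zg$ would both be the point of $[a,xg]$ at distance $d_T(a,z)$ from $a$, forcing $z\in\fix(g)$, which is impossible since $[a,x]$ meets $\fix(g)$ only at $a$. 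Applying this to $g$ and to $h^{-1}$ (using $\fix(h^{-1})=\fix(h)$), and writing $z:=xg=xh^{-1}$, the point of $\fix(g)$ nearest to $x$ and the point of $\fix(h)$ nearest to $x$ are both equal to the point of $[x,z]$ at distance $\frac{1}{2}d_T(x,z)$ from $x$, hence coincide --- contradicting $\fix(g)\cap\fix(h)=\emptyset$. Therefore $gh$ is not elliptic, that is, it is hyperbolic.

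The main obstacle is precisely this geometric input --- that an elliptic isometry displaces every point by twice its distance to the fixed subtree, along a segment through the nearest point of that subtree. Once it is in place, everything else (the Helly property, the reduction to generators, the trivial direction) is bookkeeping; and finite generation enters only to make the Helly step a finite intersection, which is genuinely necessary.
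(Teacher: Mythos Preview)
Your proof is correct. The paper does not give its own argument here --- it cites Serre and remarks that the simplicial-tree proof carries over to $\R$-trees without change --- and what you have written is exactly that adaptation: the Helly property for finitely many fixed subtrees, together with the sub-lemma that two elliptics with disjoint fixed sets have hyperbolic product, is precisely Serre's argument, and your midpoint/$Y$-point verification of the sub-lemma is the standard way to carry it through in the $\R$-tree setting.
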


\begin{rem}
	The result in \cite{Serre1980} concerns simplicial trees, but the same proof works for $\R$-trees without change.
\end{rem}

\begin{defn}
	Let $T$ be a tree. For points $x_1, \ldots, x_n$ we write $[x_1, \ldots, x_n]$ to mean that the unique segment from $x_1$ to $x_n$ crosses the points $x_2, \ldots, x_{n-1}$ in the order given. 
\end{defn}

\begin{lemma}[Circle-Dot Lemma]
	\label{circledot}
	Let $G$ act isometrically on an $\R$-tree, $T$. Suppose we have distinct points, $p,q \in T$ such that $[p,q, pg, qg]$. That is, the segment from $p$ to $qg$ crosses the points $q$ and $pg$ in that order. 

	Then $g$ is hyperbolic and both $p$ and $q$ belong to the axis of $g$. In particular, $\|g\|_{T} = d(p, pg)$.  
\end{lemma}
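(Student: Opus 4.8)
The plan is to manufacture the axis of $g$ directly from the configuration $[p,q,pg,qg]$. Set $L := d(p,pg)$, $a := d(p,q)$, and $b := d(q,pg)$. First I would record the metric consequences of the hypothesis: it gives $q \in [p,pg]$ and $pg \in [q,qg]$, and since $g$ is an isometry $d(pg,qg) = d(p,q) = a$; hence $L = a + b = d(q,qg)$. Note $L > 0$, since $L = 0$ would force $q$ into the degenerate segment $[p,pg] = \{p\}$, contradicting $p \neq q$. So already $p$ is not fixed by $g$.

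Next I would apply every power of $g$ to the hypothesis, obtaining $[pg^{n},qg^{n},pg^{n+1},qg^{n+1}]$ for all $n \in \Z$, and assemble the bi-infinite ``staircase'' $\dots, pg^{-1}, qg^{-1}, p, q, pg, qg, pg^2, \dots$. Writing it as a sequence $(x_i)_{i \in \Z}$ with $x_{2n} = pg^{n}$ and $x_{2n+1} = qg^{n}$, each translated instance of the hypothesis says exactly that a staircase point lies between its two neighbours, so $x_i \in [x_{i-1}, x_{i+1}]$ for all $i$. The heart of the argument is to upgrade this to the statement that $\ell := \bigcup_{i} [x_i, x_{i+1}]$ is a geodesic line isometric to $\R$. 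I would do this by induction on $j - i$: the concatenation $[x_i,x_{i+1}] \cup \dots \cup [x_{j-1},x_j]$ equals the segment $[x_i,x_j]$, the inductive step being precisely that at $x_j$ the direction back along $[x_i,x_j]$ (towards $x_{j-1}$) differs from the direction towards $x_{j+1}$, which is what $x_j \in [x_{j-1},x_{j+1}]$ records. Since $g$ shifts the staircase by two indices, $x_i g = x_{i+2}$, it preserves $\ell$ and maps the staircase increasingly, so it restricts to a translation of $\ell \cong \R$ by $L > 0$.

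From there the conclusion is routine: projecting onto the complete, convex subtree $\ell$ gives $d(x,xg) = 2d(x,\ell) + L \geq L$ for every $x \in T$, with equality exactly on $\ell$; hence $g$ is hyperbolic with $\|g\|_T = L$ and, by Lemma~\ref{treefacts}, $A_g = \ell$, so $p = x_0$ and $q = x_1$ lie on $A_g$, while $\|g\|_T = L = d(p,pg)$ as claimed. The hard part is the middle step --- converting the local ``straightness'' at each staircase junction into an honest bi-infinite geodesic with no backtracking --- and this is the only place the full strength of the hypothesis (all four points in the prescribed order, hence all of its $g$-translates) is used: each junction $x_i$ gets its straightness certified by exactly one translated copy of $[p,q,pg,qg]$. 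One small point to watch is the degeneracy $b = 0$ (that is, $q = pg$), in which consecutive staircase points coincide; this is harmless, since $L > 0$ and one simply runs the same argument on the subsequence of distinct points, but it deserves a remark. Everything else is bookkeeping with the tree axioms: uniqueness of segments, $Y$-points, and nearest-point projection onto a subtree.
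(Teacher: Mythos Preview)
Your argument is correct, but it takes a genuinely different route from the paper's proof. You build the axis by hand: translate the configuration $[p,q,pg,qg]$ by all powers of $g$, check that the resulting ``staircase'' concatenates to a bi-infinite geodesic $\ell$, and then read off that $g$ translates along $\ell$ by $L = d(p,pg)$. Your verification that $x_i \in [x_{i-1},x_{i+1}]$ for all $i$ is right (odd junctions come from the $n$-th translate, even junctions from the $(n-1)$-th), and the inductive concatenation step is the standard fact that in a tree $[a,b]\cap[b,c]=\{b\}$ forces $[a,c]=[a,b]\cup[b,c]$. The projection computation $d(x,xg)=2d(x,\ell)+L$ then identifies $\ell$ with $A_g$. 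Your treatment of the degenerate case $q=pg$ is also fine.

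The paper instead leans on the already-recorded Lemma~\ref{treefacts2}. It first excludes the elliptic case by a short contradiction using part~(i) (the midpoint of $[p,pg]$ lies in $A_g=\fix(g)$, and equating $d(w,q)=d(w,qg)$ forces $w$ to also be the midpoint of $[q,qg]$, which is impossible for $p\neq q$). Once $g$ is hyperbolic, part~(ii) says $Y(pg^{-1},p,pg)\in A_g$; applying $g^{-1}$ to the hypothesis gives $[pg^{-1},qg^{-1},p,q]$, so the full chain $[pg^{-1},qg^{-1},p,q,pg,qg]$ places $p$ on $[pg^{-1},pg]$, whence $p=Y(pg^{-1},p,pg)\in A_g$. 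Everything else follows.

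So the trade-off is: your argument is longer but essentially self-contained from the tree axioms, whereas the paper's is a quick two-step reduction to facts already quoted from Chiswell. Both are perfectly valid; the paper's version is what you would write once Lemma~\ref{treefacts2} is on the table.
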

\begin{rem}
	We allow the possibility that $q=pg$ in this Lemma.
\end{rem}
\begin{proof}
	We first show that $g$ is not elliptic. We argue by contradiction; if $g$ is elliptic then the midpoint of $[p, pg]$ is fixed by $g$ by Lemmas \ref{treefacts}(iii) and \ref{treefacts2}(i). Call this point $w$. Then $d_T(w, q) = d_T(w, qg)$. Since $w \in [p,pg]$, this forces $w \in [q, qg]$. But now $w$ is both the midpoint of $[p,pg]$ and $[q, qg]$ which is impossible if $p \neq q$. Hence $g$ is hyperbolic. 
    Therefore, by Lemma~\ref{treefacts2}(ii) it is enough to show that $p = Y(pg^{-1}, p , pg)$. 
    Indeed, since $[p,q,pg,qg]$ then $[pg^{-1},qg^{-1},p,q]$, and hence $[pg^{-1}, qg^{-1}, p, q, pg, qg]$. That is, $p\in[pg^{-1},pg]$ and therefore $p = Y(pg^{-1}, p , pg)$, as required. 
    
\end{proof}

	\begin{figure}
		\centering
		\begin{minipage}[t]{0.4\textwidth}
			\centering
			\begin{tikzpicture}[scale=0.8]
				\draw[->] (0,0) -- (8,0) node[anchor=north west, below right] {$A_g$};
				
				\draw (1,0) circle (3pt) node[anchor=south] {$p$}; 
				\fill (2.5,0) circle (3pt) node[anchor=south] {$q$}; 
				\draw (6,0) circle (3pt) node[anchor=south] {$pg$}; 
				\fill (7.5,0) circle (3pt) node[anchor=south] {$qg$}; 
			\end{tikzpicture}
			 \caption*{On the axis: \\ $\circ -  \bullet - \circ - \bullet$}
		\end{minipage}
		\hspace{2cm} 
		\begin{minipage}[t]{0.4\textwidth}
			\centering
			\begin{tikzpicture}[scale=0.8]
				\draw[->] (0,0) -- (8,0) node[anchor=north west, below right] {$A_g$};
				
				\draw (2,0) -- (2,4);
				\draw (2,3) circle (3pt) node[anchor=west] {$p$}; 
				\fill (2,1) circle (3pt) node[anchor=west] {$q$}; 
				
				\draw (6,0) -- (6,4);
				\draw (6,3) circle (3pt) node[anchor=west] {$pg$}; 
				\fill (6,1) circle (3pt) node[anchor=west] {$qg$}; 
			\end{tikzpicture}
			\caption*{Not on the axis: \\ $\circ -  \bullet - \bullet - \circ$}
		\end{minipage}
		\caption*{The Circle-Dot Lemma}
	\end{figure}

We will also need the following, 

\begin{lemma}[Paulin's Lemma {\cite[ Lemma 4.3]{Paulin1989}}]
\label{Paulins lemma}
	Let $G$ act minimally, non-trivially, isometrically and irreducibly on an $\R$-tree, $T$. Then for any $a, b \in T$ there exists a hyperbolic element $g \in G$ whose axis contains the segment $[a,b]$. 
\end{lemma}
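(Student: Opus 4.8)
## Proof plan for Paulin's Lemma

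The plan is to reduce the statement to finding a single hyperbolic element whose axis passes through two specified points, building on the irreducibility hypothesis and on Lemma~\ref{disjoint axes}. First I would exploit irreducibility to obtain two hyperbolic elements $g_0, h_0 \in G$ with disjoint axes; by the remark following Definition~\ref{properties of R-trees} such a pair exists (one can start from hyperbolics whose axes meet in a finite segment and conjugate one of them by a high power of the other). The key point of having disjoint axes is that for hyperbolic $u,v$ with $A_u \cap A_v = \emptyset$, the bridge $[A_u, A_v]$ is a well-defined segment, and by Lemma~\ref{disjoint axes} the product $uv$ is hyperbolic with an axis that one can describe concretely: it runs through the bridge and a sub-segment of each of $A_u$ and $A_v$ near the bridge endpoints. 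This gives a flexible way to manufacture hyperbolic elements whose axes are forced to pass through prescribed regions of the tree.

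The main steps, in order, would be: (1) Given $a,b \in T$, first handle the case where $a = b$: pick any hyperbolic $g$ (exists by minimality + non-triviality, via Lemma~\ref{Serre lemma} if $G$ is finitely generated, or more robustly directly from irreducibility via Proposition~\ref{minimal subtree}); translate its axis to pass through $a$ by conjugating, using Observation~\ref{commute} that $A_{g^h} = A_g \cdot h$ and the transitivity of $G$ on... — but wait, $G$ need not act transitively, so instead I would use the minimal subtree description: since $T$ is minimal and some element is hyperbolic, Proposition~\ref{minimal subtree} says $T$ is the union of all hyperbolic axes, so every point of $T$ — in particular $a$ — lies on the axis of some hyperbolic element. (2) For the general case with $a \neq b$, I would find a hyperbolic $g$ with $a \in A_g$ and a hyperbolic $h$ with $b \in A_h$. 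If $A_g$ already contains $[a,b]$ we are done. Otherwise, after possibly replacing $g$ by $g^{h^n}$ for large $n$ (making axes disjoint, using the final remark after Definition~\ref{properties of R-trees}), arrange that $A_g$ and $A_h$ are disjoint, both are "long enough" near their closest points, and the bridge between them passes through both $a$ and $b$ in a controlled way. (3) Then Lemma~\ref{disjoint axes} produces a hyperbolic element (a suitable product of conjugates/powers of $g$ and $h$) whose axis, by the explicit bridge description, contains $[a,b]$.

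The hard part will be step~(2): arranging the geometry so that the bridge between the two disjoint axes actually contains the segment $[a,b]$, and that the portions of $A_g$ and $A_h$ retained in the product's axis are long enough to cover whatever parts of $[a,b]$ stick out past the bridge endpoints. Concretely, I expect to need to pass to high powers $g^m$, $h^k$ so that the overlap of $A_{g^m u g^{-m}}$-type axes with $[a,b]$ is sufficiently long, and to use the Circle-Dot Lemma (Lemma~\ref{circledot}) as the final verification tool: if I can exhibit points $p, q$ on the candidate segment with $[p, q, p\gamma, q\gamma]$ for the constructed element $\gamma$, then $\gamma$ is hyperbolic with $p, q \in A_\gamma$, forcing $[a,b] \subseteq A_\gamma$. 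An alternative, cleaner route for step~(2) — which I would try first — is to note that it suffices to find one hyperbolic element $g$ with $[a,b] \subseteq A_g$; take a hyperbolic $g_1$ through $a$ and a hyperbolic $g_2$ through $b$ with disjoint axes (conjugating to force disjointness), observe $[a,b]$ must cross the bridge $[A_{g_1}, A_{g_2}]$, and then take $\gamma = g_1^N g_2^N$ for $N$ large; Lemma~\ref{disjoint axes} guarantees $\gamma$ is hyperbolic and its axis is $[\,p_1, \text{bridge}, p_2\,]$ extended periodically, where $p_1 \in A_{g_1}$, $p_2 \in A_{g_2}$ are at distance $\sim N\|g_1\|$, $\sim N\|g_2\|$ from the bridge — large enough to engulf all of $[a,b]$. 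Verifying this last containment precisely is the only genuinely technical calculation, and it is where I would spend the bulk of the write-up.
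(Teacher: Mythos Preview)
The paper does not give its own proof of this lemma; it simply cites Paulin's original paper \cite{Paulin1989}. So there is no in-paper argument to compare against, and your plan must stand on its own.

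Your toolkit is the right one --- products of hyperbolics, Lemma~\ref{disjoint axes}, and the Circle-Dot Lemma --- and your treatment of the case $a=b$ via Proposition~\ref{minimal subtree} is fine. The problem is in Step~(2), where you try to force the two axes to be disjoint. You propose replacing $g$ by $g^{h^n}$, citing the remark after Definition~\ref{properties of R-trees}. But that remark produces disjointness between $A_g$ and $A_{g^{h^n}}$ --- two conjugates of $g$ --- not between $A_{g^{h^n}}$ and $A_h$. In fact, if $A_g \cap A_h \neq \emptyset$ then $A_{g^{h^n}} \cap A_h = (A_g \cap A_h)\cdot h^n$ is still nonempty for every $n$, so conjugating by powers of $h$ never separates these axes. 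More fundamentally, \emph{any} conjugation of $g$ moves its axis, so once you replace $g$ you lose the condition $a \in A_g$ on which the rest of the argument depends; the bridge you end up with has no reason to pass through $a$. Your ``cleaner route'' has the same defect: one cannot in general choose $g_1,g_2$ with $a\in A_{g_1}$, $b\in A_{g_2}$, and $A_{g_1}\cap A_{g_2}=\emptyset$. For instance, if neither $a$ nor $b$ is a branch point then the lines through them are uniquely determined, and those two lines may well overlap.

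The repair is to treat the overlapping case on an equal footing. If $A_{g_1}\cap A_{g_2}$ is a nondegenerate segment and the two elements translate coherently along it, then a Circle-Dot computation (entirely parallel to your disjoint case) shows that for large $N,M$ and suitable signs the axis of $g_1^{\pm N}g_2^{\pm M}$ contains a length-$N\|g_1\|$ piece of $A_{g_1}$, the overlap, and a long piece of $A_{g_2}$ beyond it. With both the disjoint and overlapping product constructions available, one finishes by a covering/induction argument along $[a,b]$: push the right endpoint of the already-covered initial segment strictly forward at each step by combining with an axis through a point just beyond it. Making that induction terminate (the supremum is attained and can be exceeded) is where the genuine work lies, and is what your plan currently skips.
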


This allows us to deduce the $R_{\infty}$ property when we have an action of the mapping torus on a tree. Thus the following Lemmas will be a key tool for deducing property $R_{\infty}$. We note that the existence of the action required by this Lemma will be discussed in Corollary~\ref{cor action of Mphi}. 

\begin{lemma}
\label{irreducible for G}
    Let $G$ be a group and let $\varphi\in\aut(G)$. If $M_{\varphi}$ acts on a tree $T$ minimally, isometrically, irreducibly then $G$ also acts minimally and irreducibly on $T$. 
\end{lemma}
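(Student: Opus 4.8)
The plan is to deduce both conclusions from the fact that $G$ is normal in $M_\varphi$ with cyclic quotient $M_\varphi/G\cong\Z$, together with the irreducibility and minimality of the $M_\varphi$-action. I would establish irreducibility of the $G$-action first, since minimality then follows quickly from Proposition~\ref{minimal subtree}. The recurring idea is that, because $G\trianglelefteq M_\varphi$, any $G$-invariant object — the fixed point set, an invariant line, a fixed end — is carried by each $m\in M_\varphi$ to another object of the same type: conjugation by $m$ permutes $G$, so $m$ maps $G$-invariant data to $G$-invariant data. Concretely, the set of $G$-invariant lines, and the set of $G$-fixed ends, are each permuted by $M_\varphi$, and $\fix(G)\cdot m$ is again the fixed set of $G$.

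Suppose, for contradiction, that $G$ does not act irreducibly; by Definition~\ref{properties of R-trees} there is then a point, a line, or an end of $T$ invariant under $G$, and I would rule out each possibility. If $G$ fixes a point, then $\fix(G)$ is a non-empty subtree, $M_\varphi$-invariant by the observation above, so minimality of the $M_\varphi$-action forces $\fix(G)=T$; the $M_\varphi$-action then factors through $\Z$, but a cyclic group acting on a tree always fixes a point or preserves a line (the axis of a hyperbolic generator), contradicting irreducibility of the $M_\varphi$-action. If $G$ fixes no point but preserves a line $\ell$, I would show $\ell$ is the unique $G$-invariant line: given another such line $\ell'$, the set $\ell\cap\ell'$ is a $G$-invariant convex subtree, hence empty, a single point, a bounded segment, a ray, or all of the common line $\ell=\ell'$, and in each of the first four cases a $G$-fixed point appears — the endpoint in $\ell$ of the shortest segment joining the disjoint lines, the point itself, the midpoint, or the endpoint of the ray — a contradiction; then, as $M_\varphi$ permutes the $G$-invariant lines and there is only one, it preserves $\ell$, again contradicting irreducibility of the $M_\varphi$-action. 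Finally, if $G$ fixes no point and preserves no line but fixes an end, then $G$ fixes a unique end: it cannot fix three distinct ends, since the point where the rays towards them meet would be $G$-fixed, nor exactly two, since the line joining them would be $G$-invariant; as $M_\varphi$ permutes the $G$-fixed ends it then fixes the unique one, once more contradicting irreducibility of the $M_\varphi$-action. Hence $G$ acts irreducibly on $T$.

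I expect the only step requiring genuine work, as opposed to formal manipulation of the normality of $G$, to be the uniqueness of the $G$-invariant line in the second case; it rests on a short $\R$-tree computation (shortest segments between disjoint convex subtrees, midpoints, the branch point of a triple of ends). Everything else is bookkeeping with conjugation in $M_\varphi$.

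For minimality, note that since $G$ acts irreducibly it contains a hyperbolic element, so by Proposition~\ref{minimal subtree} there is a unique minimal $G$-invariant subtree $T_G$, namely the union of the axes of the hyperbolic elements of $G$. For $m\in M_\varphi$ and hyperbolic $g\in G$, the conjugate $g^{m}:=m^{-1}gm$ again lies in $G$, is hyperbolic, and satisfies $A_{g^{m}}=A_g\cdot m$ by Observation~\ref{commute}; thus the $M_\varphi$-action permutes these axes and therefore preserves $T_G$. Minimality of the $M_\varphi$-action then gives $T_G=T$, so $G$ acts minimally on $T$, which completes the proof.
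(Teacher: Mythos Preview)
Your proof is correct, but the route differs from the paper's. The paper first produces a $G$-hyperbolic element directly: taking $m_1,m_2\in M_\varphi$ hyperbolic with disjoint axes, it writes $m_1^b=t^{ab}g$ and $m_2^{-a}=t^{-ab}h$ and observes that their product lies in $G$ and is hyperbolic by Lemma~\ref{disjoint axes}. From this it deduces $G$-minimality via Proposition~\ref{minimal subtree}, and only then argues irreducibility --- which is now very short, since a $G$-invariant line would have to equal $T$ by the minimality just established. You reverse the order: you establish irreducibility first by a uniqueness analysis of invariant points, lines and ends, then extract a hyperbolic element from irreducibility itself, and finally get minimality. Your line case requires the small $\R$-tree computation you describe, whereas the paper's is a one-liner once minimality is in hand; on the other hand you avoid the explicit disjoint-axes construction of a $G$-hyperbolic element. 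Both arguments exploit the same underlying mechanism (normality of $G$ in $M_\varphi$ propagates $G$-invariant data to $M_\varphi$-invariant data), just assembled in a different order.
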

\begin{proof}
    We can restrict the action to $G$ and get an isometric action of $G$ on $T$. We first claim that $G$ admits a hyperbolic element with respect to this action. 

    Since $M_{\varphi}$ acts irreducibly on $T$ we get that (by the remark after Definition~\ref{properties of R-trees}) we have two hyperbolic isometries, $m_1, m_2 \in M_{\varphi}$ whose axes are disjoint. If either of these are in $G$, then we have a hyperbolic isometry in $G$. Otherwise, we may write $m_1=t^{a} u, m_2=t^b v$, where $0 \neq a,b \in \Z$ and $u,v \in G$. Now consider, $m_1^b = t^{ab} g, m_2^{-a} = t^{-ab} h$ for some $g, h \in G$. By Lemma~\ref{treefacts}(iv) the axes of $m_1$ and $m_1^b$ are equal, as are the axes of $m_2$ and $m_2^{-a}$. Hence the axes of $t^{ab} g$ and $t^{-ab} h$ are disjoint. Therefore, by Lemma~\ref{disjoint axes}, $(t^{ab} g) (t^{-ab} h) \in G$ is hyperbolic. This proves the claim. 

    Now, by Lemma~\ref{minimal subtree}, $G$ admits a minimal $G$-invariant subtree. But since $G$ is a normal subgroup of $M_{\varphi}$, this is also a $M_{\varphi}$-invariant subtree and so is the whole of $T$, as $M_{\varphi}$ acts minimally. This proves that $G$ also acts minimally on $T$. 

    Finally, to prove that $G$ acts irreducibly on $T$ notice that if there were an invariant line, then $T$ would have to be a line by minimality, contradicting the fact that $M_{\varphi}$ acts minimally and irreducibly. If $G$ were to admit a fixed end, then any hyperbolic axes $A_g$, $A_h$ of hyperbolic elements of $G$ would intersect in this end. However, if $g \in G$ is hyperbolic and $m, \in M_{\varphi}$, then $g^m \in G$ is also hyperbolic and $A_{g^m} = (A_g)m$, showing that the end would need to be invariant by all elements of $M_{\varphi}$, again contradicting the fact that $M_{\varphi}$ acts irreducibly.

\end{proof}

\begin{lemma}\label{lemma Mphi acting on T}
	Let $G$ be a group and let $\varphi\in\aut(G)$. If $M_{\varphi}$ acts on a tree $T$ minimally, isometrically, irreducibly, then $G$ has $R_{\infty}(\varphi)$.
\end{lemma}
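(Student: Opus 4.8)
The plan is to run the standard translation-length obstruction. By Lemma~\ref{conjclassesofmappingtorus} it suffices to show that $\{tx : x\in G\}$ meets infinitely many $M_\varphi$-conjugacy classes, and since the translation length $\|\cdot\|_T$ is invariant under conjugation by an isometry, the map $tx\mapsto\|tx\|_T$ is constant on $M_\varphi$-conjugacy classes; hence it is enough to prove that $\{\|tx\|_T : x\in G\}$ is infinite, or equivalently, being a set of non-negative reals, that it is unbounded. By Lemma~\ref{irreducible for G} the group $G$ acts minimally and irreducibly on $T$ (hence also non-trivially), so in particular $G$ contains a hyperbolic element, and therefore, by Lemma~\ref{treefacts}(v), hyperbolic elements of arbitrarily large translation length. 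If $t$ acts trivially on $T$ then $\|tx\|_T=\|x\|_T$ for every $x\in G$ and we are done, so from now on assume $t$ acts non-trivially.

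The heart of the argument is to exhibit, for each $R>0$, an element $x\in G$ with $\|tx\|_T\ge R$. The two tools are Lemma~\ref{Paulins lemma} (Paulin's Lemma), applied to the irreducible action of $G$, which produces a hyperbolic $g\in G$ whose axis contains any prescribed segment --- and then, passing to a power $g^{\pm n}$, one with translation length as large as desired and translating in whichever of the two directions one prefers --- together with Lemma~\ref{circledot} (the Circle--Dot Lemma), which turns a combinatorial betweenness statement into a lower bound on translation length. I would argue according to the dynamical type of $t$. If $t$ is hyperbolic with axis $A_t$, pick $p\in A_t$ and set $q=pt$, so that $p$, $pt$, $pt^{2}$ lie consecutively along $A_t$, and note $p\cdot tg=(pt)g$ and $q\cdot tg=(pt^{2})g$; using Paulin choose a hyperbolic $g\in G$ with $A_g\supseteq[pt,pt^{2}]$, translating from $pt$ towards $pt^{2}$, and with $\|g\|_T>\max\{\|t\|_T,R\}$. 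Tracking the points shows $[p,q,(pt)g,(pt^{2})g]$, so Lemma~\ref{circledot} applied to the isometry $tg$ at $p,q$ gives that $tg$ is hyperbolic with $\|tg\|_T=\|t\|_T+\|g\|_T\ge R$. If $t$ is elliptic and $\fix(t)$ contains an arc $[v_1,v_2]$, choose a hyperbolic $g\in G$ with $A_g\supseteq[v_1,v_2]$, $\|g\|_T>\max\{d(v_1,v_2),R\}$, translating from $v_1$ towards $v_2$; since $t$ fixes $v_1$ and $v_2$ we have $v_i\cdot tg=v_ig$, whence $[v_1,v_2,v_1g,v_2g]$ on $A_g$ and Lemma~\ref{circledot} gives $\|tg\|_T=\|g\|_T\ge R$. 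Finally, if $t$ is elliptic with $\fix(t)=\{v\}$ a single point, then $v$ is not fixed by $G$ (by minimality, since $T$ is not a point), so irreducibility of the $G$-action supplies a hyperbolic $g\in G$ with $v\notin A_g$; for each $n$, a Circle--Dot computation with $p=v$ and $q$ the nearest-point projection of $v$ onto $A_g$ shows that $tg^{n}$ is hyperbolic with $\|tg^{n}\|_T=n\|g\|_T+2\,d(v,A_g)$, which tends to infinity.

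In every case we obtain elements of the coset $tG$ of arbitrarily large translation length, so $\{\|tx\|_T : x\in G\}$ is unbounded and $G$ has $R_\infty(\varphi)$. The step I expect to be the main obstacle is the verification of the betweenness hypothesis of Lemma~\ref{circledot} in each case: one must keep careful track of how the isometry $t$ displaces the chosen base points relative to the axis $A_g$, and it is precisely this that forces the split above into the various dynamical types of $t$; once the betweenness is established the Circle--Dot Lemma does the rest. (Alternatively, the last case can be reduced to the hyperbolic one by first observing that $tG$ then contains a hyperbolic element and replacing $\varphi$ by an inner twist via Lemma~\ref{lemma inner autos preserve twisted conjugacy}, under which the set $\{tx : x\in G\}$ does not change; but the direct computation is essentially as short.)
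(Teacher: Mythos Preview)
Your proposal is correct and follows essentially the same approach as the paper: reduce via Lemma~\ref{conjclassesofmappingtorus} to showing the coset $tG$ has unbounded translation length, invoke Lemma~\ref{irreducible for G} so that Paulin's Lemma applies to $G$, and then split into the three cases ($t$ hyperbolic, $t$ elliptic with a non-degenerate fixed arc, $t$ elliptic with a single fixed point), using Paulin's Lemma to produce a suitable $g\in G$ and the Circle--Dot Lemma to compute $\|tg^n\|_T$. The only cosmetic differences are that the paper chooses the segment $[a,at^2]$ (so that $a\in A_g$) rather than your $[pt,pt^2]$, and uses powers $g^n$ of a fixed $g$ rather than a single $g$ with $\|g\|_T>R$; your betweenness verification still goes through.
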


\begin{proof}
 Notice that by Lemma~\ref{irreducible for G}, $G$ also acts on $T$ minimally and irreducibly. In particular, Paulin's Lemma~\ref{Paulins lemma} applies to the action of $G$ on $T$.

	We will prove the Lemma by showing that we have infinitely many conjugacy classes of the form $tx$, where $x \in G$, and deduce the result by Lemma~\ref{conjclassesofmappingtorus}. 
	
    \medskip
    
	\noindent
	Case (i): First suppose that $t$ acts hyperbolically on $T$. Choose $a$ in the axis of $t$ and let $b=at^2$. Then, by Paulin's Lemma, there exists a hyperbolic $g \in G$ whose axis contains the segment, $[a,b]$. 
	
	As the axes of $t$ and $g$ intersect non-trivially, we can assume (by replacing $g$ with $g^{-1}$ if needed) that $t$ and $g$ translate in the same direction along their intersection. In that case, using the Circle-Dot Lemma (Lemma~\ref{circledot}) with $p=a, q=at$, we have that $\|tg^n\|= \|t\| + n \|g\|$ for all positive integers $n$.
    Hence these elements are all in distinct conjugacy classes, since translation length is a conjugacy invariant. 
	
    \medskip
	
	\noindent
	Case(ii)(a): Next suppose that $t$ is elliptic and that $|\fix(t)| > 1$. Here choose $a \neq b \in \fix(t)$ and let $g \in G$ be a hyperbolic element whose axis contains $[a,b]$. By replacing $g$ with a sufficiently large (but possibly negative) power, we may assume that $b \in [a, ag]$.

	It is straightforward to verify using the Circle-Dot Lemma (Lemma~\ref{circledot}) with $p=a, q=b$ that $\|t g^n\| = n\|g\|$ for all positive integers $n$ and hence, again by Lemma~\ref{conjclassesofmappingtorus}, we are done.

    \medskip
	
	\noindent
	Case(ii)(b): Lastly suppose that $t$ is elliptic and fixes a unique point in the tree. Since the action is $G$-irreducible, $G$ admits two hyperbolic elements whose axes are disjoint. In particular, we may find a hyperbolic $g \in G$ whose axis does not contain the unique fixed point of $T$.

	Let $p$ be the fixed point of $t$ and let $q$ be the point on the axis of $g$ closest to $p$. The the Circle-Dot Lemma~(\ref{circledot}) gives us that $\|t g^n \| = 2d(p,q) + |n|\| g \|$. Hence these elements fall into infinitely many conjugacy classes.

\end{proof}

\begin{rem}
    Note that when using Lemma~\ref{lemma Mphi acting on T} we refer to Lemma~\ref{irreducible for G} to deduce that the action of $G$ on the same tree is also irreducible and minimal. In practice, however, we will start with a minimal irreducible action of $G$ and extend to one on the mapping torus via Corollary~\ref{cor action of Mphi}. In that sense, Lemma~\ref{irreducible for G} is redundant but we record it here for completeness. 
\end{rem}

\subsection{Deformation spaces and reduced trees}

In this section we wish to explore the concepts of a `reduced' simplicial tree, which is different to that of an irreducible tree. 

We start with the definition of a deformation space. This discussion is taken from \cite{Guirardel2007}, to which we refer the reader for a fuller account. 


\begin{defn}[{\cite[Definition 3.2]{Guirardel2007}}] Let $T,S$ be two simplicial $G$-trees. 
\begin{itemize}
    \item     We say that $T$ and $S$ have the same elliptic subgroups if for every subgroup $H$ of $G$ which fixes a point of $T$, $H$ also fixes a point of $S$, and vice versa. 
    \item $T$ and $S$ are then said to be in the same deformation space if they have the same elliptic subgroups. 
    \item 
The deformation space $\mathcal{D}$ containing $T$ is then the set of  all $G$-trees (up to equivariant isometry) which have the same elliptic subgroups as $T$.
\end{itemize}

\end{defn}

\begin{rem}
    Note that the notion of equivariant isometry makes sense since we can regard our simplicial trees $T$ as simplicial $\R$-trees by assiging length 1 to each edge. 
\end{rem}

\begin{defn}[{\cite[Definition 3.12]{Guirardel2007}}]
Given a deformation space $\mathcal{D}$ and a collection of subgroups $\mathcal{A}$ of $G$ (closed under conjugacy and passing to subgroups) we define $\mathcal{D_A} \subseteq \mathcal{D}$ as the set
of $G$-trees $T \in \mathcal{D}$ whose edge stabilizers belong to $\mathcal{A}$. We call $\mathcal{D_A}$ a restricted deformation space, or simply a deformation space.
\end{defn}

\begin{prop}[{\cite[Proposition 3.10 (1)]{Guirardel2007}}]
Let $\mathcal{D}$ be a deformation space. If one tree $T \in \mathcal{D}$ is irreducible, then all trees in $\mathcal{D}$ are irreducible. 
\end{prop}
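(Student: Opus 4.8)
The plan is to argue by contradiction in the following form: fix an irreducible tree $T\in\mathcal D$ and an arbitrary $S\in\mathcal D$, and show that $S$ must be irreducible. The elementary but crucial observation is that lying in the same deformation space forces $S$ and $T$ to have not merely the same elliptic subgroups but the same elliptic \emph{elements}: for $g\in G$ the cyclic group $\langle g\rangle$ fixes a point of $T$ if and only if it fixes a point of $S$, so $\|g\|_T=0$ exactly when $\|g\|_S=0$. Equivalently, an element of $G$ is hyperbolic for the action on $T$ if and only if it is hyperbolic for the action on $S$.

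Since $T$ is irreducible, by the Remark following Definition~\ref{properties of R-trees} there are $g,h\in G$ that are hyperbolic on $T$ with disjoint axes $A_g$ and $A_h$ (taken in $T$). By the observation above, $g$ and $h$ are also hyperbolic on $S$. Now suppose $S$ is reducible; by Definition~\ref{properties of R-trees} this means $G$ leaves invariant a point, a line, or an end of $S$, and in each case I will produce an element of $G$ that is hyperbolic on $T$ but elliptic on $S$, which is the contradiction we want.

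If $G$ fixes a point of $S$, then $g$ is elliptic on $S$, which is impossible. If $G$ fixes an end $\omega$ of $S$, let $\beta=\beta_\omega\colon G\to\mathbb{Z}$ be the associated Busemann homomorphism (it is integer-valued because $S$ is simplicial, viewed as a metric tree with all edges of length $1$). A hyperbolic element fixing $\omega$ has $\omega$ as an endpoint of its axis, so $\beta$ is nonzero on it; hence $\beta(g),\beta(h)\in\mathbb{Z}\setminus\{0\}$. Set $w:=g^{\beta(h)}h^{-\beta(g)}$. Then $\beta(w)=0$, and since $w$ fixes $\omega$ this forces $w$ to be elliptic on $S$ (a hyperbolic element fixing $\omega$ would have $\beta(w)\neq 0$). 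On the other hand $g^{\beta(h)}$ and $h^{-\beta(g)}$ are nontrivial powers of $g$ and $h$, hence hyperbolic on $T$ with axes $A_g$ and $A_h$ by Lemma~\ref{treefacts}(v); these are disjoint, so $w$ is hyperbolic on $T$ by Lemma~\ref{disjoint axes}. Finally, if $G$ leaves a line $L$ of $S$ invariant, then each of $g,h$ preserves $L$ and, being hyperbolic on $S$ with no fixed point, acts on $L\cong\mathbb R$ as a nontrivial translation, so fixes both ends of $L$. Thus $g,h$ lie in the index-at-most-two subgroup $G^+\leq G$ fixing an end $\omega$ of $L$, and running the previous paragraph's argument with $\beta_\omega\colon G^+\to\mathbb{Z}$ (noting $w\in G^+$) gives the same contradiction.

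I expect the only real point needing care — and the one place the hypothesis that the trees are simplicial is used — to be that the Busemann homomorphism is $\mathbb{Z}$-valued, which is exactly what lets us clear denominators to build $w$ with $\beta(w)=0$; over a general $\mathbb R$-tree one would only get this when $\beta(g)/\beta(h)$ is rational. Everything else is bookkeeping over the short list of reducible cases, using only the structure of isometries of trees already recorded in Lemma~\ref{treefacts}, Lemma~\ref{disjoint axes}, and Definition~\ref{properties of R-trees}.
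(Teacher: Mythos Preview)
The paper does not supply its own proof of this proposition; it is simply quoted from \cite[Proposition 3.10(1)]{Guirardel2007}. So there is nothing to compare against directly.

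Your argument is correct. The observation that trees in the same deformation space have the same hyperbolic \emph{elements} (via the cyclic subgroups $\langle g\rangle$) is the right starting point, and the case split on the three types of reducible action is clean. The fixed-end case is handled correctly: the Busemann homomorphism kills exactly the elliptics, and since the paper's deformation spaces consist of simplicial trees metrised with all edges of length $1$ (see the Remark immediately following the definition of deformation space), $\beta$ really is $\mathbb{Z}$-valued and your element $w=g^{\beta(h)}h^{-\beta(g)}$ is well defined. Even without that convention, irreducibility is metric-independent, so one may always re-metrise $S$ with unit edge lengths before running the argument; your caveat about rationality is therefore not a genuine obstruction in this setting. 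The reduction of the invariant-line case to the fixed-end case via the orientation-preserving subgroup $G^{+}$ is also fine: the point you use implicitly, that a hyperbolic isometry preserving a line $L$ must act on it as a nontrivial translation (hence fixes both ends of $L$), follows since a reflection of $L$ would have a fixed point in $S$.
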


\begin{defn}[{\cite[Definition 3.5]{Guirardel2007}}]
\label{definition of reduced}
	A simplicial $G$-tree $T$ is called reduced if whenever $e=(u,v)$ is an edge of $T$ with $\stab_G(e) = \stab_G(v)$, then $u$ and $v$ are in the same $G$-orbit. 
\end{defn}

\begin{obs}
    Note that any deformation space contains a reduced tree. See \cite{Guirardel2007}, Definitions 3.3, 3.4 and 3.5. 
\end{obs}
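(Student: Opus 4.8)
The plan is to exhibit a reduced tree inside the (restricted) deformation space by starting from an arbitrary $T\in\mathcal{D}$ and repeatedly performing \emph{elementary collapse moves}, each of which stays inside $\mathcal{D}$ and strictly simplifies the tree, until no such move is available --- at which point the tree is reduced by Definition~\ref{definition of reduced}.

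First I would describe the collapse move. Suppose $T$ is not reduced, so there is an edge $e=(u,v)$ of $T$ with $\stab_G(e)=\stab_G(v)$ but with $u$ and $v$ in different $G$-orbits. Let $\mathcal{O}$ be the union of the $G$-orbits of $e$ and of $\overline{e}$, and let $\pi\colon T\to T'$ be the map collapsing every edge of $\mathcal{O}$ to a point. Since $\mathcal{O}$ is $G$-invariant, $\pi$ is $G$-equivariant and surjective, and $T'$ is again a simplicial $G$-tree (the quotient of a tree by an invariant forest is a tree). One also checks, via the equivariant surjection $\pi$, that if $T$ is minimal then so is $T'$.

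The two things to verify are that $T'$ is still in $\mathcal{D}$ and that the process terminates. For the first, $T$ and $T'$ must have the same elliptic subgroups. If $H\le G$ fixes a point of $T$ then it fixes its image in $T'$, giving one inclusion immediately. For the converse, suppose $H$ fixes a vertex $\overline{w}$ of $T'$; then $Y:=\pi^{-1}(\overline{w})$ is an $H$-invariant subtree of $T$ all of whose edges lie in $\mathcal{O}$, and I would argue that $H$ fixes a vertex of $Y$. The key point is that the condition $\stab_G(v)=\stab_G(e)$ forces each ``$v$-side'' endpoint occurring in $Y$ to lie on a single edge of $Y$: if $g_1e,g_2e$ are edges of $Y$ with $g_1v=g_2v$ then $g_1^{-1}g_2\in\stab_G(v)=\stab_G(e)$, hence $g_1e=g_2e$. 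Thus these endpoints are leaves of $Y$; pruning them equivariantly reduces $Y$ to a subtree with strictly fewer edges, and iterating (or invoking the corresponding analysis of elementary deformations in \cite{Guirardel2007}) shows $H$ fixes a vertex of $Y\subseteq T$. Finally, every edge of $T'$ is the image of a unique edge of $T$ outside $\mathcal{O}$ and carries the same stabiliser; since the family $\mathcal{A}$ defining a restricted deformation space is closed under conjugacy and subgroups, $T\in\mathcal{D}_{\mathcal{A}}$ implies $T'\in\mathcal{D}_{\mathcal{A}}$.

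For termination, note that a collapse move destroys the orbit of $e$ and, since $\pi$ induces a $G$-equivariant bijection between the edges of $T'$ and the edges of $T$ outside $\mathcal{O}$, creates no new edge orbit; hence the number of $G$-orbits of edges strictly decreases. When $T$ is cocompact --- the only case needed in this paper, the relevant groups being finitely generated --- the procedure halts after finitely many steps at a reduced tree. For a general deformation space one instead runs a Zorn's lemma argument on the poset of trees in $\mathcal{D}$ ordered by collapse, or simply quotes \cite{Guirardel2007}. The step I expect to be the main obstacle is the converse inclusion of elliptic subgroups in the verification that $T'\in\mathcal{D}$, i.e. showing that any group stabilising a collapsed subtree $Y$ fixes a vertex of it; everything else is bookkeeping.
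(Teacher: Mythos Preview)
The paper itself gives no proof of this observation; it merely records the fact and points to \cite{Guirardel2007}. Your argument is essentially the standard one underlying that reference (elementary collapses preserve the deformation space and strictly decrease the number of edge orbits), so in that sense you have supplied what the paper only cites.

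Two comments. First, your analysis of the preimage $Y=\pi^{-1}(\overline w)$ can be sharpened: once you know every $v$-type vertex of $Y$ is a leaf, note that all edges of $Y$ join a $u$-type vertex to a $v$-type vertex (since $u,v$ lie in distinct $G$-orbits and every edge of $\mathcal O$ is a translate of $e$), so removing the leaves leaves a connected edgeless graph, i.e.\ a single $u$-type vertex. Thus $Y$ is a star, $H$ preserves the $G$-orbit partition, and $H$ fixes the centre outright --- no iteration is needed. Second, the Zorn's lemma suggestion for the non-cocompact case is not obviously workable: a chain of collapses need not have an upper bound that is still a tree in $\mathcal D$ (one can collapse away all edges in the limit). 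You are right to hedge by citing \cite{Guirardel2007}, and in any case only the cocompact situation is used in this paper.
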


\begin{lemma}
\label{reduced implies minimal}
	Let $T$ be a simplicial reduced $G$-tree. Then $T$ is also minimal. 
\end{lemma}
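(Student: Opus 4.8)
The plan is to prove the contrapositive: if $T$ is a simplicial reduced $G$-tree that is not minimal, then there is a nonempty proper $G$-invariant subtree $T' \subsetneq T$, and we will derive a contradiction with reducedness. First I would recall that by Proposition~\ref{minimal subtree} (or rather its simplicial analogue), the set of hyperbolic axes spans the unique minimal subtree, but to keep the argument self-contained I would instead work directly with the structure of $T \setminus T'$. Since $T'$ is $G$-invariant, proper and nonempty, there is at least one vertex $v \notin T'$; choose $v$ to be at minimal distance from $T'$, so that $v$ is adjacent to a vertex $u$ with $u \in T'$ (here one uses that a subtree is convex, so the nearest point projection of $v$ onto $T'$ exists and the geodesic $[v, T']$ has its second vertex in $T'$). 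Let $e = (u,v)$ be the corresponding edge.

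The key step is to analyze the stabilisers of $e$ and of its endpoints. Because $v$ lies outside the $G$-invariant subtree $T'$ and $u$ is its unique neighbour on the $T'$-side, every element of $\stab_G(v)$ must fix the edge $e$: if $g \in \stab_G(v)$ then $g$ permutes the neighbours of $v$, but it also maps the geodesic from $v$ to $T'$ to the geodesic from $v$ to $T'$ (as $T'$ is $G$-invariant), hence fixes its initial edge, namely $e$. Thus $\stab_G(v) \subseteq \stab_G(e)$, and the reverse inclusion is automatic, so $\stab_G(e) = \stab_G(v)$. By Definition~\ref{definition of reduced}, this forces $u$ and $v$ to be in the same $G$-orbit: there exists $g \in G$ with $ug = v$ (or $vg = u$; I would fix orientation conventions here).

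Now I derive the contradiction from $u \in T'$ and $v \notin T'$ with $u, v$ in the same orbit. Since $T'$ is $G$-invariant and $u \in T'$, we get $ug \in T'$, i.e. $v \in T'$ — contradicting $v \notin T'$. Hence no such proper invariant subtree exists, so $T$ is minimal. The main obstacle — really the only subtle point — is justifying that $\stab_G(v) \subseteq \stab_G(e)$, i.e. that $u$ is the \emph{unique} neighbour of $v$ lying in (or toward) $T'$; this rests on convexity of subtrees and the fact that $v \notin T'$ forces the geodesic $[v,T']$ to be non-degenerate with a well-defined first edge, which is preserved by the $v$-stabiliser because $T'$ is $G$-invariant. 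Care is also needed if $G$ does not act without inversions, but one can subdivide or note that reducedness is usually stated for actions without inversions, so I would either assume that or handle the inversion case by passing to the barycentric subdivision.
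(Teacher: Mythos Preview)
Your proof is correct and follows essentially the same approach as the paper: both argue the contrapositive by picking an edge $e=(u,v)$ with $u$ in the proper invariant subtree and $v$ outside, and both establish $\stab_G(v)=\stab_G(e)$ using $G$-invariance of the subtree together with uniqueness of reduced paths in a tree. The only cosmetic difference is that the paper phrases the conclusion as ``$T$ is not reduced'' (exhibiting an edge with $\stab_G(e)=\stab_G(v)$ whose endpoints lie in distinct orbits since $u\cdot G\subseteq T'$ while $v\notin T'$), whereas you assume reducedness, deduce $u$ and $v$ are in the same orbit, and obtain a contradiction; your minimal-distance choice of $v$ is also unnecessary (any boundary edge works), but harmless.
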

\begin{proof}
We prove the contrapositive. So suppose that $T$ is not minimal; we will deduce that it is not reduced. 

We consider some proper, $G$-invariant subtree, $S$. Let $e=(u,v)$ be an edge of $T$ such that $u \in S$ but $v \not\in S$. Then $v \not\in S \supseteq u.G$, as $S$ is $G$-invariant and so $u$ and $v$ are not in the same $G$ orbit. It then suffices to show that $\stab_G(v) \subseteq \stab_G(e)$ (as we always have that $\stab_G(e) \subseteq \stab_G(v)$).

Consider $g \in \stab_G(v)$. Then the edge path, $e,\overline{e}g$ starts at $u$ and ends at $ug$. Since neither $e$ nor $eg$ is an edge of $S$ and $u, ug \in S$ are connected by a path in $S$, we must have that this path fails to be reduced (and $u=ug$). Thus $e=eg$, as required. 
\end{proof}

\begin{obs}[\cite{Guirardel2007}]
\label{reduced tree facts}
    Let $G$ be a finitely generated group acting on a simplicial tree, $T$. 
\begin{enumerate}[(i)]
\item An edge $e$ of $T$ is called collapsible if collapsing $e$ to a point produces a $G$-tree in the same deformation space as $T$, \cite[Definition 3.3]{Guirardel2007}. 
    \item We can collapse all collapsible edges of $T$ to produced a reduced $G$-tree, $r(T)$, the reduction of $T$. Any edge stabiliser of $r(T)$ stabilises an edge of $T$, \cite[Definition 3.5]{Guirardel2007}. 
    \item $T$ and $r(T)$ lie in the same deformation space. In particular, the action on $T$ is non-trivial if and only if $r(T)$ is not a point, \cite[Theorem 3.8]{Guirardel2007} and Lemma~\ref{reduced implies minimal}. 
    \item If $T$ is reduced and the edge stabilisers of $T$ are finite, then all reduced trees in the same deformation space as $T$ have the same edge stabilisers as $T$, \cite[Proposition 7.1 (3) and Corollary 7.3]{Guirardel2007}
\end{enumerate}
\end{obs}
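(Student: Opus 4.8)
The plan is to dispatch the four items in turn. Item (i) is a definition of Guirardel--Levitt and carries no content to prove; items (ii)--(iv) are short deductions from \cite{Guirardel2007} combined with Lemma~\ref{reduced implies minimal}. For item (ii), I would argue as follows. Any subset of the edge set of a tree spans a subforest, so the $G$-invariant set $F$ of collapsible edges of $T$ spans a $G$-invariant subforest; let $r(T)$ be the $G$-tree obtained by $G$-equivariantly collapsing each component of $F$ to a point (equivalently, iterating the collapse of collapsible edge-orbits, which by item (i) preserves the deformation space at every step, until none remains). That this process terminates in a tree that is \emph{reduced} in the sense of Definition~\ref{definition of reduced} is exactly the content of \cite[Definitions 3.3--3.5 and the surrounding discussion]{Guirardel2007}. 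The quotient map $\pi \colon T \to r(T)$ is $G$-equivariant and, since we only collapse edges, it restricts to a $G$-equivariant bijection between the unoriented edges of $T$ not lying in $F$ and the unoriented edges of $r(T)$. Hence if $g \in G$ fixes an edge $\bar e$ of $r(T)$, then $g$ must fix the unique edge $e = \pi^{-1}(\bar e)$ of $T$; so $\stab_G(\bar e) \le \stab_G(e)$ and every edge stabiliser of $r(T)$ is an edge stabiliser of $T$.

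For item (iii), the claim that $T$ and $r(T)$ lie in the same deformation space is precisely \cite[Theorem 3.8]{Guirardel2007}, since $r(T)$ is obtained from $T$ by collapses of collapsible edges. For the ``in particular'': if $r(T)$ is a single point, then $G$ fixes it, so $G$ is an elliptic subgroup of $r(T)$; since $T$ and $r(T)$ have the same elliptic subgroups, $G$ is elliptic in $T$, i.e. the $G$-action on $T$ is trivial. Conversely, if the $G$-action on $T$ is trivial, then $G$ fixes a point of $T$, hence (same elliptic subgroups) $G$ fixes some point of $r(T)$; but $r(T)$ is reduced, hence minimal by Lemma~\ref{reduced implies minimal}, and the only minimal $G$-tree admitting a global fixed point is a single point, so $r(T)$ is a point.

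For item (iv), this is a direct appeal to \cite[Proposition 7.1(3) and Corollary 7.3]{Guirardel2007}: Proposition 7.1(3) provides the regularity of the deformation space that is forced by finiteness of edge stabilisers, and Corollary 7.3 then states that all reduced trees in such a deformation space have the same edge stabilisers (up to conjugacy), which is exactly the assertion. The only thing to verify is that the hypotheses of those statements are met verbatim and that Guirardel--Levitt's notion of ``reduced'' coincides with Definition~\ref{definition of reduced}; both are routine once the correspondence ``$\stab_G(e) = \stab_G(v)$ with $u,v$ in distinct orbits $\Leftrightarrow$ $e$ is collapsible'' is spelled out.

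I expect the main obstacle to be bibliographic rather than mathematical: matching the precise hypotheses and terminology of \cite{Guirardel2007} (especially for item (iv) and for the termination statement underlying item (ii)), since the arguments themselves are one-line deductions. The single genuinely non-formal micro-step is the edge-stabiliser bijection in item (ii), which rests on the observation that collapsing a subforest of a tree never identifies two distinct unoriented edges of $T$.
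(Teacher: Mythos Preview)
Your proposal is correct and matches the paper's approach exactly: the paper treats this Observation as a recollection of facts from \cite{Guirardel2007} and provides no proof beyond the inline citations, so your item-by-item unpacking of those citations (together with the use of Lemma~\ref{reduced implies minimal} for the ``in particular'' clause of (iii)) is precisely the intended justification, only spelled out in more detail than the paper bothers to give.
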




\subsection{Irreducible actions on trees}

We write down a fairly simple criterion for an action of a group on a tree to be irreducible. This will be useful in what follows. 

First we will need the following Lemma.

\begin{lemma}
\label{virtually abelian}
   Suppose  $G$ is a finitely generated group and the derived subgroup $[G,G]$ is finite.  Then G is virtually abelian.
\end{lemma}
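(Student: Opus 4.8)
The plan is to use the standard fact that a finitely generated group is virtually abelian once we know its center has finite index, so the strategy is to show $[G:Z(G)] < \infty$. First I would invoke the hypothesis that $[G,G]$ is finite. Consider the conjugation action of $G$ on the finite set $[G,G]$; more precisely, I would look at the action of $G$ on itself by conjugation and observe that for each fixed $g \in G$, the commutator map $x \mapsto [g,x] = g^{-1}x^{-1}gx$ takes values in the finite set $[G,G]$. Hence the centralizer $C_G(g) = \{x : [g,x] = 1\}$ has index at most $|[G,G]|$ in $G$, since $x$ and $y$ have the same image under $x \mapsto [g,x]$ precisely when they lie in the same coset of $C_G(g)$ (using that $[g,xy]$ is determined by $[g,x]$ and $[g,y]$ up to the finite data, or more directly: $xC_G(g) \mapsto g^x$ is injective on cosets and lands in the finite conjugacy class of $g$, which has size at most $|[G,G]|$ since $g^x = g[g,x]$).

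Next I would use finite generation: let $g_1, \ldots, g_n$ generate $G$. Then
$$
Z(G) = \bigcap_{i=1}^{n} C_G(g_i),
$$
since an element central on a generating set is central. Each $C_G(g_i)$ has finite index in $G$ (at most $|[G,G]|$ by the previous paragraph), so their finite intersection $Z(G)$ also has finite index in $G$.

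Finally, I would apply the classical theorem of Schur (or rather its converse direction): a group with $Z(G)$ of finite index has finite derived subgroup — but here I actually want the cleaner elementary observation that if $Z(G)$ has finite index then $G$ is virtually abelian by definition, since $Z(G)$ is itself an abelian subgroup of finite index. So the conclusion is immediate once finite index of the center is established. The main obstacle — though it is quite mild — is the bookkeeping in the first paragraph: verifying carefully that the map $xC_G(g) \mapsto g^x$ is a well-defined injection from cosets of $C_G(g)$ into the conjugacy class of $g$, and that this conjugacy class is contained in $g \cdot [G,G]$ and hence finite. Everything else is routine.
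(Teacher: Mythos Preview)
Your proof is correct and follows essentially the same approach as the paper: show each conjugacy class is finite via $g^x \in g[G,G]$, deduce centralisers have finite index, intersect over a finite generating set to get $Z(G)$ of finite index, and conclude. The only difference is cosmetic---you spell out the coset/injection bookkeeping and mention Schur's theorem in passing, whereas the paper states the steps more tersely.
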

\begin{proof}
    Any element g of G has finitely many conjugates since  $x^{-1} g x = g g^{-1} x^{-1} gx \in g[G,G]$.  Therefore, the centraliser of any element has finite index. Hence the intersection of all the centralisers of the generators has finite index, as G is finitely generated. But this intersection is clearly central in G and hence abelian. Therefore G is virtually abelian. 
\end{proof}

\begin{prop}
\label{irreducible finite edge stabs}
Let $G$ be a finitely generated group acting minimally and non-trivially on a tree, $T$. Suppose further that the action is reducible. Then, 

\begin{enumerate}[(i)]
    \item If $T$ is simplicial and edge stabilisers are finite, then $G$ is virtually cyclic and $T$ is a line. 
    \item If $T$ is an $\R$-tree and (pointwise) arc stabilisers are trivial, then $G$ is virtually abelian and $T$ is a line. 
\end{enumerate}
\end{prop}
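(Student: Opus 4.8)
The plan is to use the dichotomy contained in the definition of an irreducible action (Definition~\ref{properties of R-trees}): since the action is minimal and non-trivial, reducibility means precisely that $G$ leaves invariant a line or an end of $T$, an invariant point being excluded by non-triviality. I would first treat the case in which $G$ leaves a line $L$ invariant: then $L$ is a $G$-invariant subtree, so minimality forces $T = L$, and the action is simply a homomorphism into the isometry group of $L$. In case (i), $L$ is a simplicial line and $G$ acts simplicially, so the image lies in the infinite dihedral group; the kernel fixes $T$ pointwise, hence lies in an edge stabiliser and is finite; and by Lemma~\ref{Serre lemma} the image contains a non-trivial translation, so it is an infinite subgroup of the infinite dihedral group and therefore virtually cyclic --- hence so is $G$. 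In case (ii), $L \cong \R$ and $G$ maps to the isometry group of $\R$, which is $\R \rtimes \Z/2$; the kernel pointwise fixes an arc, so it is trivial, and the translation subgroup of $G$ has index at most $2$ and is a finitely generated subgroup of $(\R,+)$, hence free abelian of finite rank; so $G$ is virtually abelian. In both cases $T = L$ is a line, as required.

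It remains to rule out the possibility that $G$ leaves invariant an end $\xi$ but no line. The tool is the Busemann character $\beta \colon G \to \R$ associated to $\xi$, the homomorphism recording the signed distance by which $g$ shifts a ray representing $\xi$ (it takes values in $\Z$ when $T$ is simplicial). Since the only fixed ends of a hyperbolic isometry are the two ends of its axis, and since $\| g \| = 0$ forces a fixed point (Lemma~\ref{treefacts}(i)), an element $g \in G$ has $\beta(g) = 0$ if and only if $g$ is elliptic, while a hyperbolic $g$ has $\beta(g) = \pm \| g \| \neq 0$; in particular $\beta \not\equiv 0$ because $G$ contains a hyperbolic element (Lemma~\ref{Serre lemma}). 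In case (ii) an elliptic element fixing $\xi$ fixes an entire ray toward $\xi$, which contains a non-trivial arc, so the trivial-arc-stabiliser hypothesis yields $\ker \beta = 1$; then $\beta$ embeds $G$ into $(\R,+)$, so $G$ is abelian, and if $g \in G$ is hyperbolic then every element of $G$ commutes with $g$ and hence preserves the line $A_g$ (Observation~\ref{commute}), so $T = A_g$ by minimality --- contradicting the assumption that no line is invariant.

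For case (i) the goal is the same contradiction, and the crux is to show that $N := \ker \beta$ is finite. First, every vertex stabiliser is finite: $\stab(v)$ fixes $v$ and $\xi$, hence fixes the first edge of $[v,\xi)$, and so lies in a finite edge stabiliser. Now fix a hyperbolic $g_0$ with $\beta(g_0) = n > 0$, a vertex $x_0$ on $A_{g_0}$, and let $x_0 = v_0, v_1, v_2, \dots$ be the vertices of the ray $[x_0, \xi)$, which is contained in $A_{g_0}$, so that $v_k g_0 = v_{k+n}$. Every $g \in N$ is elliptic and fixes $\xi$, hence fixes a ray toward $\xi$ and so fixes $v_k$ for all large $k$; therefore $N = \bigcup_k \stab(v_k)$ is an increasing union of finite groups. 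Conjugating by $g_0$ gives $\stab(v_{k+n}) = g_0^{-1} \stab(v_k) g_0$, so these finite groups all have the same order; together with the chain $\stab(v_0) \le \dots \le \stab(v_n)$ this forces $\stab(v_k)$ to be independent of $k$, and hence $N = \stab(v_0)$ is finite. Being a finite normal subgroup, $N$ has $\fix(N) \neq \emptyset$, and $\fix(N)$ is a $G$-invariant subtree, hence equal to $T$ by minimality; so $N$ acts trivially and the action factors through $G/N \cong \beta(G) \cong \Z$. A minimal non-trivial action of $\Z$ on a tree is on the axis of a generator, that is, on a line, so $T$ is a line --- the desired contradiction. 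The one genuinely delicate point is this finiteness of $N$: one must combine the finite edge stabilisers with the $g_0$-translation to control the a priori increasing chain of vertex stabilisers along a ray toward $\xi$, after which the remainder is bookkeeping with the definitions and with translation lengths.
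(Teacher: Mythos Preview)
Your proof is correct, and it follows a genuinely different route from the paper's. The paper does not split into the line case and the end-but-no-line case; instead it passes once and for all to an index-$2$ subgroup so as to have an invariant end, and then works uniformly. In case~(i) it shows that any commutator of elements of $G$ fixes a subray of a fixed ray $R$ toward the end, and then uses co-compactness of the action (available because $G$ is finitely generated and the action is minimal and simplicial) to get a uniform bound $M$ on the orders of edge stabilisers; this forces $|[G,G]| \leq M$, whence $G$ is virtually abelian by an auxiliary lemma, and then all hyperbolic axes coincide so $T$ is a line. Case~(ii) is then a short variant of the same idea.

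Your argument via the Busemann homomorphism $\beta$ is cleaner in several respects. You avoid the auxiliary lemma on groups with finite derived subgroup, you never need a global bound on edge-stabiliser orders, and the heart of the matter --- that $\ker\beta$ is trivial in~(ii) and finite in~(i) --- is isolated transparently. In particular, your trick of using a single hyperbolic element $g_0$ to show that the increasing chain $\stab(v_0) \le \stab(v_1) \le \cdots$ is periodic up to conjugation, and hence constant, is more direct than the paper's appeal to co-compactness. Conversely, the paper's approach has the virtue of treating the line and end cases simultaneously (a $G$-invariant line gives, after passing to index~$2$, a $G$-invariant end), so there is no separate bookkeeping for the two sub-cases. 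Both arguments ultimately hinge on the same geometric fact that an elliptic isometry fixing an end must fix an entire ray toward that end.
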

\begin{proof}
    If the action is reducible and non-trivial then we can, up to taking an index 2 subgroup of $G$, suppose that there is an invariant end. (See \cite[Corollary 2.3 and Theorem 2.5 (ii)]{Culler1987}; for a dihedral action take the index two subgroup corresponding to the orientation preserving isometries of $\R$.)

    \begin{enumerate}[(i)]
        \item Consider first the case where $T$ is simplicial and edge stabilisers are finite.

    Let $R$ be a ray representing this invariant end. Invariance of the end means that $R \cap Rg$ is an infinite set (in fact, a subray) for every $g \in G$.

    Next observe that for any vertex $p \in R$ there exist only finitely many pairs of vertices $x, y \in R$ such that $p$ is the midpoint of $[x,y]$. This is because both $x$ and $y$ must be equidistant from $p$ and be distinct. But now $ d(x,p) = d(y,p) \leq d(\omega, p)$ where $\omega$ is the intial point of the ray, $R$. There are clearly only finitely many pairs satisfying this on the ray. 

    Armed with this observation, we proceed as follows. Note that for any $g \in G$, $R \cap Rg^{-1}$ is infinite. If we take any point $x \in R \cap Rg^{-1}$ then $x, xg \in R$. But the midpoint of $[x, xg]$ lies in $R \cap A_g$ by Lemma~\ref{treefacts2}(i). Since we have infinitely many possible values for $x$ this implies that $R \cap A_g$ is infinite for any $g \in G$. 

    Therefore if $g$ is elliptic it fixes some subray of $R$ (but potentially a different subray for each $g$) and if $g$ is hyperbolic then the axis of $g$ intersects $R$ in a subray. 

    In any case it is then easy to see that the commutator of any two elements is elliptic with fixed point set containing an infinite subray of $R$. But now any finite set of commutators must fix some common infinite subray of $R$ (one can take the intersection of the fixed subrays for each commutator). Hence any finite set of commutators are contained in some edge stabiliser; for instance the first edge of the common fixed subray. In particular, any finite set of commutators are contained in some edge stabiliser and so generate a finite group as edge stabilisers are finite.

    While it is possible that each finite set of commutators is contained in a different edge stabiliser we will argue that the derived subgroup is finite (and contained in some edge stabiliser) as follows. The action of $G$ on $T$ is co-compact, since $G$ is finitely generated. Hence there exists a constant, $M$, such that any edge stabiliser has order at most $M$. Therefore any finitely generated subgroup of $[G,G]$ has order bounded by $M$ which implies that $[G,G]$ has order bounded by $M$. (As $[G,G]$ is countable and generated by commutators, we can realise it as a countable union of a chain of finitely generated subgroups. Since the order of these subgroups is bounded, the chain of subgroups stabilises and hence $[G,G]$ is finite.)
    
    Therefore $[G,G]$ is finite and hence $G$ is virtually abelian by Lemma~\ref{virtually abelian}. This implies that any two hyperbolic elements of $G$ have positive powers which commute, since sufficiently large positive powers will lie in an abelian subgroup. Therefore, by  Lemma~\ref{treefacts} (v) and Observation~\ref{commute}, all hyperbolic elements have the same axis. This axis is now a $G$-invariant subtree of $T$ by Propostion~\ref{minimal subtree} and so is equal to $T$. Therefore $T$ is a line. 

    To finish we note that some finite index subgroup of $G$ is torsion-free abelian and acts on $T$ with finite (and hence trivial) edge stabilisers. Thus the torsion-free subgroup of $G$ is both free and free abelian (and non-trivial since if $G$ were finite then the action on $T$ would be trivial). Therefore $G$ is virtually infinite cyclic. 
\item The case where $T$ is  an $\R$-tree with trivial arc stabilisers is similar but easier. In this case, once descending to an index two subgroup of $G$ admitting an invariant end, we deduce that any two hyperbolic axes intersect in an infinite ray. Since arc stabilisers are trivial, this means that any two hyperbolic elements commute which implies that their axes are equal. Hence $T$ is isometric to the real line and $G$ is virtually abelian.  

 \end{enumerate}   
\end{proof}

\subsection{Actions and automorphisms}			

\begin{defn}
	Let $G$ be a group, $T$ a $G$-tree and $\varphi \in \aut (G)$. Then $\varphi T$ is the same underlying tree but with the $G$-action twisted by $\varphi$. Specifically, 
	$$
	x {\cdot}_{(\varphi T )} g := x \cdot_T (g \varphi).  
	$$
	Equivalently, if the action on $T$ is given by a homomorphism, $G \xrightarrow{\pi} Isom(T)$, then the action on $\varphi T$ is given by the pre-composition by $\varphi$; \ $G \xrightarrow{\varphi} G \xrightarrow{\pi} Isom(T)$. 
\end{defn}

\begin{rem}
	Since we are writing our action of $\aut(G)$ on $G$ as a right action, this action of $\aut(G)$ on the space of $G$-trees is a left action. Were we to write automorphisms of $G$ on the left, this would be a right action.
\end{rem}

\begin{defn}
	\label{topological representative}
	Let $G$ be a group and $\varphi \in \aut(G)$. A topological representative of $\varphi$ is an equivariant map $f:T \to \varphi T $, from a $G$-tree, $T$, which sends vertices to vertices and edges to edge paths. 
\end{defn}

\begin{obs}
	Note that $f: T \to \varphi T $ being $G$-equivariant means that $f(xg) = f(x) (g \varphi)$ for all $x \in T, g \in G$. 
	
	However, note that if $f$ is a topological representative for $\varphi$, then $f_w$ is a topological representative for $\varphi Ad(w)$, where $f_w(x) = f(x) w$, and $g (\varphi Ad(w)) = w^{-1} (g \varphi) w$. Thus having a topological representaive is really a property of outer automorphisms. 
\end{obs}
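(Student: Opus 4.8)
The statement is a matter of unwinding the definitions of the twisted tree $\varphi T$ and of $G$-equivariance, so the plan is essentially computational rather than conceptual.

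First I would dispatch the opening assertion. By definition $f\colon T\to\varphi T$ is $G$-equivariant precisely when $f(x\cdot_T g)=f(x)\cdot_{\varphi T}g$ for all $x\in T$, $g\in G$; since $x\cdot_{\varphi T}g=x\cdot_T(g\varphi)$ by the definition of $\varphi T$, this is exactly the identity $f(xg)=f(x)(g\varphi)$.

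For the main claim I would let $f_w$ be the composite of $f$ with the map $x\mapsto x\cdot_T w$, i.e.\ $f_w(x)=f(x)w$ computed with the \emph{untwisted} $G$-action on the underlying tree, and verify the two defining properties of a topological representative of $\varphi Ad(w)$. For equivariance, using the identity just established for $f$ together with $g\cdot(\varphi Ad(w))=w^{-1}(g\varphi)w$, I would compute $f_w(xg)=f(xg)w=f(x)(g\varphi)w$ and, on the other side, $f_w(x)\cdot_{(\varphi Ad(w))T}g=(f(x)w)\bigl(w^{-1}(g\varphi)w\bigr)=f(x)(g\varphi)w$, so the two agree. For the simplicial condition, $f$ sends vertices to vertices and edges to edge paths by hypothesis, and postcomposing with $x\mapsto x\cdot_T w$ preserves this because $G$ acts on the simplicial tree $T$ by simplicial automorphisms, while $(\varphi Ad(w))T$ has the very same underlying simplicial tree.

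The final sentence then follows formally. Since $\inn(G)=\{Ad(w):w\in G\}$ is normal in $\aut(G)$, the outer class of $\varphi$ equals $\{\varphi Ad(w):w\in G\}$; hence, given any $\psi$ in this class, the previous step (applied to the pair $(\varphi,w)$ in one direction and to $(\psi,w^{-1})$, using $\psi Ad(w^{-1})=\varphi$, in the other) shows that $\varphi$ admits a topological representative over a $G$-tree $T$ if and only if $\psi$ does over the same $T$. The only point requiring care — rather than a genuine obstacle — is keeping track of which of the three coincident underlying trees $T$, $\varphi T$, $(\varphi Ad(w))T$ carries which $G$-action at each step, and in particular that the post-composition defining $f_w$ uses the untwisted action of $w$.
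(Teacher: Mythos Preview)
Your verification is correct and complete. The paper treats this as a bare observation with no accompanying proof, so there is nothing to compare against beyond noting that your unwinding of the definitions---checking that $f_w(xg)=f(x)(g\varphi)w=f_w(x)\cdot_{(\varphi Ad(w))T}g$ and that postcomposition by the simplicial automorphism $x\mapsto xw$ preserves the vertex/edge-path conditions---is exactly the computation the authors are implicitly leaving to the reader.
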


\begin{thm}[Culler--Morgan {\cite[Theorem 3.7 (p.586)]{Culler1987}}]
	\label{cullermorgan}
	If $T_1$ and $T_2$ are two minimal, irreducible $G$-trees with $\|g\|_{T_1} = \|g\|_{T_2}$ for all $g \in G$, then there is a unique equivariant isometry, $f:T_1 \to T_2$. 
\end{thm}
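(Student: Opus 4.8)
I would prove uniqueness and existence separately. Uniqueness is short and rests only on minimality; existence is the substantive part, and amounts to the assertion that the translation length function of an irreducible minimal action determines the tree up to equivariant isometry.

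\textbf{Uniqueness.} Suppose $f,f'\colon T_1\to T_2$ are equivariant isometries and set $\psi:=(f')^{-1}\circ f\colon T_1\to T_1$. Then $\psi$ is an isometry satisfying $\psi(xg)=\psi(x)g$ for all $x\in T_1,\ g\in G$, so $\psi$ commutes with the action of every element of $G$. Its fixed set $\fix(\psi)$ is a closed subtree, and it is $G$-invariant, since $\psi(x)=x$ implies $\psi(xg)=\psi(x)g=xg$. By minimality $\fix(\psi)$ is empty or all of $T_1$, so it suffices to exhibit a fixed point. Since $T_1$ is irreducible, by the remark after Definition~\ref{properties of R-trees} we may choose hyperbolic $g,h\in G$ with disjoint axes. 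As $\psi$ commutes with $g$ and with $h$ and is an isometry, $\psi(A_g)=A_g$ and $\psi(A_h)=A_h$ by the translation-length characterisation of axes (Lemma~\ref{treefacts}), hence $\psi$ preserves the bridge joining $A_g$ to $A_h$ — a nondegenerate segment — and in particular fixes each of its two endpoints. Therefore $\fix(\psi)=T_1$ and $f=f'$.

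\textbf{Existence.} The hypothesis $\|g\|_{T_1}=\|g\|_{T_2}$ for all $g$ in particular makes $g$ hyperbolic in $T_1$ iff it is hyperbolic in $T_2$. By Proposition~\ref{minimal subtree} and minimality, each $T_i$ is the union of the axes $A^{(i)}_g$ of its hyperbolic elements, and for hyperbolic $g$, $A^{(i)}_g$ is a copy of $\R$ on which $g$ translates by $\|g\|$, the same number for $i=1,2$ (Lemma~\ref{treefacts}(iv)). The key point is that the relative position of two such axes is recovered from translation lengths: for hyperbolic $g,h$ the numbers $\|g\|,\|h\|,\|gh\|,\|gh^{-1}\|$ determine whether $A_g$ and $A_h$ are disjoint, the distance $d(A_g,A_h)$ if so (Lemma~\ref{disjoint axes}), and the length and relative orientation of the overlap $A_g\cap A_h$ if not — this is the standard case analysis for a pair of isometries of an $\R$-tree, cf. \cite[Section 1]{Culler1987}, with the Circle--Dot Lemma~\ref{circledot} managing the bookkeeping. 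One then fixes a hyperbolic $g_0$ and a transverse hyperbolic $h_0$ (by irreducibility one may take $A_{h_0}$ disjoint from $A_{g_0}$), uses the point of $A_{g_0}$ closest to $A_{h_0}$ as a canonical anchor, and builds the $g_0$-equivariant isometry $\phi_0\colon A^{(1)}_{g_0}\to A^{(2)}_{g_0}$ carrying anchor to anchor. To extend $\phi_0$ over all of $T_1$: given $x\in T_1$, Paulin's Lemma~\ref{Paulins lemma} gives a hyperbolic $k$ with $x\in A^{(1)}_k$ and with $A^{(1)}_k$ meeting $A^{(1)}_{g_0}$ at the projection of $x$; the configuration and orientation of $A^{(2)}_k$ relative to $A^{(2)}_{g_0}$ are pinned down by the length data above, so one defines $f(x)\in A^{(2)}_k\subseteq T_2$ to be the point in the corresponding position. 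It remains to check that this prescription is independent of the auxiliary element $k$, is an isometry, and is $G$-equivariant; each of these reduces, via Lemma~\ref{treefacts2} and the Circle--Dot Lemma~\ref{circledot}, to compatibility identities among the numbers $\|w\|$ for suitable words $w$ in $G$, which hold in $T_2$ precisely because they hold in $T_1$.

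\textbf{Main obstacle.} The delicate work is entirely in the existence half: showing that the pointwise recipe for $f$ is \emph{consistent} — that two different auxiliary axes $A_k$, $A_{k'}$ reaching the same point $x$ yield the same image — and then that the resulting global map is an isometry. Equivariance, $f(xg)=f(x)g$, then reduces to the same type of compatibility identity, using $A_{k^g}=A_k\cdot g$ and $\|k^g\|=\|k\|$. (Alternatively one may package the argument as: an irreducible minimal $G$-tree is the tree associated, in the sense of Chiswell and Alperin--Bass, to a based length function, and irreducibility allows the base point to be chosen canonically in terms of $\|\cdot\|$; the above is only the shape of Culler and Morgan's original proof of \cite[Theorem 3.7]{Culler1987}.)
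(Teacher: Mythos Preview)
The paper does not give a proof of this theorem; it is stated as a black box and attributed to Culler--Morgan \cite[Theorem~3.7]{Culler1987}, followed only by the remark that irreducibility is a stronger hypothesis than strictly needed. There is therefore nothing in the paper to compare your argument against.

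For what it is worth, your sketch is sound in outline and is essentially the shape of Culler and Morgan's original proof (as you yourself note at the end). The uniqueness half is complete: the fixed set of $(f')^{-1}\circ f$ is a $G$-invariant subtree, and the bridge between two disjoint axes supplies a fixed point. One small point worth making explicit there is why $\psi$ cannot act as a nontrivial translation on $A_g$: commuting with $g$ forces $\psi|_{A_g}$ to be a translation, but then uniqueness of the closest point of $A_g$ to $A_h$ (preserved by the isometry $\psi$) pins that translation to zero. Your existence sketch correctly isolates the substantive issue --- consistency of the axis-by-axis recipe --- and correctly identifies that it reduces to identities among translation lengths, which transfer from $T_1$ to $T_2$ by hypothesis.
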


\begin{rem}
	Irreducible is a stronger hypothesis than needed but suffices for our purposes and makes the statement a little cleaner. 
\end{rem}

\begin{cor}\label{cor action of Mphi}
	Let $T$ be a minimal irreducible $G$-tree and $\Phi \in \out(G)$. Then $M_{\Phi}$ has an isometric action on $T$ which restricts to the $G$-action on $T$ if and only if $\| g \Phi \|_T = \|g \|_T$ for all $g \in G$. 
\end{cor}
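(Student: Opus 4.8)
The plan is to fix a representative $\varphi \in \Phi$ and work with the concrete mapping torus $M_\varphi$. First I would observe that nothing depends on this choice: the map $M_{\varphi\iota_w} \to M_\varphi$ sending $g \mapsto g$ for $g \in G$ and the stable letter $s \mapsto tw$ is an isomorphism restricting to the identity on $G$, and $g(\varphi\iota_w) = (g\varphi)^w$ has the same $T$-translation length as $g\varphi$ by Observation~\ref{commute}, so both the isomorphism type of $M_\varphi$ rel $G$ and the quantity $\|g\Phi\|_T := \|g\varphi\|_T$ are well-defined.

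For the forward direction, suppose $M_\varphi$ acts isometrically on $T$ extending the given $G$-action, via $\rho\colon M_\varphi \to \mathrm{Isom}(T)$ with $\rho|_G$ the given action. In $M_\varphi$ we have $g^t = t^{-1}gt = g\varphi$ for every $g \in G$, so $\rho(g\varphi) = \rho(t)^{-1}\rho(g)\rho(t)$ is conjugate to $\rho(g)$ in $\mathrm{Isom}(T)$. Conjugate isometries have equal translation length (for instance $A_{h^u} = A_h\cdot u$ by Observation~\ref{commute}, and $\cdot\, u$ is an isometry), while $\rho(g)$ and $\rho(g\varphi)$ realise the $T$-translation lengths $\|g\|_T$ and $\|g\varphi\|_T$ since $\rho|_G$ is the given action. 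Hence $\|g\varphi\|_T = \|g\|_T$ for all $g$, i.e. $\|g\Phi\|_T = \|g\|_T$.

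For the converse, suppose $\|g\varphi\|_T = \|g\|_T$ for all $g \in G$, and consider the twisted tree $\varphi T$, whose underlying metric space is $T$. Directly from the definition $x\cdot_{\varphi T} g = x\cdot_T(g\varphi)$ one gets $\|g\|_{\varphi T} = \|g\varphi\|_T$; and since $\varphi$ is a bijection of $G$, a subset (respectively point, line, end) of $T$ is $G$-invariant for the $\varphi T$-action if and only if it is $G$-invariant for the $T$-action, so $\varphi T$ is again minimal and irreducible. By hypothesis $\|g\|_{\varphi T} = \|g\|_T$ for all $g$, so Culler--Morgan (Theorem~\ref{cullermorgan}) yields a unique equivariant isometry $f\colon T \to \varphi T$. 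Viewed as a self-map of the common underlying space, $f$ is an isometric embedding of $T$ whose image is a $G$-invariant subtree of $\varphi T$, hence all of $\varphi T$ by minimality; so $f$ is a bijective isometry of $T$. Equivariance reads $f(x\cdot_T g) = f(x)\cdot_{\varphi T} g = f(x)\cdot_T(g\varphi)$. I then extend the $G$-action to $M_\varphi = \langle G, t \mid g^t = g\varphi\rangle$ by letting $t$ act as $f$; the only relation to check is, for $g \in G$ and $x \in T$, that $(x\cdot g)\cdot t = f(x\cdot_T g) = f(x)\cdot_T(g\varphi) = (x\cdot t)\cdot(g\varphi)$, which is precisely $g^t = g\varphi$ read as operators on $T$. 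Since $f$ is a bijective isometry, this defines an isometric action of $M_\varphi$ on $T$ restricting to the given one on $G$.

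The step requiring the most care is bookkeeping with conventions: the $\aut(G)$-action on $G$ is a right action while the induced action on $G$-trees is a left action, so one must track the composition of isometries and the direction of the relation $g^t = g\varphi$ consistently in order to see that ``$t$ acts as $f$'' is exactly the right prescription. Once that is pinned down, the substantive input is simply that $\varphi T$ inherits minimality and irreducibility so Culler--Morgan applies, and the verification that $t \mapsto f$ respects the single defining relation of $M_\varphi$ is the short computation above.
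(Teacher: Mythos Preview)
Your proof is correct and follows essentially the same approach as the paper: both directions proceed exactly as in the paper's argument, using conjugacy-invariance of translation length for the forward direction and Culler--Morgan applied to $T$ and $\varphi T$ for the converse, extending the action by letting $t$ act as the resulting equivariant isometry $f$. You are simply more explicit in places the paper leaves implicit (well-definedness with respect to the choice of $\varphi$, minimality and irreducibility of $\varphi T$, surjectivity of $f$), but the substance is identical.
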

\begin{rem}
	Note that the isomorphism class of $M_{\Phi}$ and the value of $\| g \Phi \|_T$ only depend on the outer automorphism class, so this makes sense. One can think of $g \Phi$ as a conjugacy class rather than a single element. 
\end{rem}

\begin{proof}
	Suppose first that $M_{\Phi}$ acts isometrically on $T$ in a way which extends the $G$-action. Then, since translation length is a conjugacy invariant, 
	$$
	\| g \|_T = \| g^t \|_T = \| g \Phi \|_T \text{ for all } g \in G.
	$$
	
	Conversely, suppose that $\| g \Phi \|_T = \| g \|_T$ for all $ g \in G$. Then, by definition, $\| g\|_{\Phi T } = \|g \Phi \|_T$, which equals $\|g\|_T$ by hypothesis.

	Choose some $\varphi \in \Phi$. Then, by  Theorem~\ref{cullermorgan}, there is an isometry $f:T \to T$ which is equivariant when viewed as an isometry $f: T \to \varphi T$. But now we can extend the $G$ action to an $M_{\Phi}= M_{\varphi}$ action by simply having $t$ act as $f$. The only thing to check is that $t^{-1} g t $ and $g \varphi$ act in the same way on $T$. But this is readily checked to be equivalent to saying that $f: T \to \varphi T$ is equivariant. 
\end{proof}

We can now state the main tools we will use to prove $R_{\infty}$. 

The first we have already stated in Lemma~\ref{lemma Mphi acting on T}. 

\begin{prop}[Levitt--Lustig {\cite[Proposition 3.1]{Levitt2000}}]\label{LL Prop 3.1}
	Let $G$ be a finitely generated group, fix $\Phi\in\out(G)$.
	Let $l$ be the length function of an irreducible action of $G$ on an $\mathbb{R}$-tree.
	If $l\circ\Phi=l$, then $G$ has $R_{\infty}(\Phi)$.
\end{prop}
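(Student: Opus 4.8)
The plan is to reduce Proposition~\ref{LL Prop 3.1} to Lemma~\ref{lemma Mphi acting on T} by promoting the irreducible $G$-action on the $\mathbb{R}$-tree to an action of the mapping torus $M_\varphi$, and then passing to a minimal invariant subtree so that the hypotheses of Lemma~\ref{lemma Mphi acting on T} are met. Concretely, fix $\varphi \in \Phi$ and let $G$ act on the $\mathbb{R}$-tree $T_0$ irreducibly with length function $l$. First I would replace $T_0$ by its minimal $G$-invariant subtree $T$, which exists by Proposition~\ref{minimal subtree} (an irreducible action certainly has a hyperbolic element), is still irreducible, and has the same length function $l$. So without loss of generality the action of $G$ on $T$ is minimal and irreducible.

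Next I would invoke the hypothesis $l \circ \Phi = l$, i.e. $\|g\Phi\|_T = \|g\|_T$ for all $g \in G$, together with Corollary~\ref{cor action of Mphi}: this exactly gives an isometric action of $M_\varphi$ on $T$ extending the $G$-action. At this point the $G$-action on $T$ is minimal and irreducible, but I still need to check that the \emph{$M_\varphi$-action} is minimal and irreducible so that Lemma~\ref{lemma Mphi acting on T} applies. Minimality is immediate: a nonempty $M_\varphi$-invariant subtree is in particular $G$-invariant, hence all of $T$ by minimality of the $G$-action. Irreducibility of the $M_\varphi$-action also follows, since irreducibility is witnessed by a pair of hyperbolic elements whose axes meet in a finite-length arc (or are disjoint) — this pair can be taken inside $G$, and it witnesses irreducibility for the larger group $M_\varphi$ as well (having more isometries cannot create a global fixed point, fixed end, or invariant line when none exists for the subgroup; an invariant line or end for $M_\varphi$ would a fortiori be invariant for $G$, and a global fixed point likewise). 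Thus $M_\varphi$ acts on $T$ minimally, isometrically, and irreducibly.

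Finally, I would apply Lemma~\ref{lemma Mphi acting on T} directly: it yields that $G$ has $R_\infty(\varphi)$, and since this holds for the chosen representative $\varphi \in \Phi$, by Definition~\ref{def:rinf} we conclude that $G$ has $R_\infty(\Phi)$, as required.

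The only real subtlety — and hence the step I would be most careful about — is verifying that the $M_\varphi$-action inherits irreducibility (and that one may genuinely restrict to a minimal $G$-subtree without losing the equality $l\circ\Phi = l$, since $l$ is the same function on $T_0$ and on $T$). Everything else is a routine chaining together of Proposition~\ref{minimal subtree}, Corollary~\ref{cor action of Mphi}, and Lemma~\ref{lemma Mphi acting on T}. In fact, since Corollary~\ref{cor action of Mphi} is stated for minimal irreducible $G$-trees, one must be slightly careful to apply it only after passing to the minimal subtree; the equality of length functions is unaffected by this passage because translation lengths are computed intrinsically and the minimal subtree is isometrically embedded and $G$-invariant.
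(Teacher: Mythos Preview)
Your proposal is correct and follows precisely the route the paper itself lays out: although the paper states Proposition~\ref{LL Prop 3.1} by citation to Levitt--Lustig rather than giving a standalone proof, the remark following Lemma~\ref{lemma Mphi acting on T} makes explicit that the intended argument is exactly yours --- pass to the minimal $G$-subtree, apply Corollary~\ref{cor action of Mphi} to extend to an $M_\varphi$-action, and then invoke Lemma~\ref{lemma Mphi acting on T}. Your care in checking that the $M_\varphi$-action remains minimal and irreducible is appropriate and correctly handled.
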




\begin{prop}[Levitt--Lustig {\cite[Proposition 3.2]{Levitt2000}}]\label{LL Prop 3.2}
	Let $G$ be a finitely generated group, fix $\Phi\in\out(G)$.
	Let $l$ be the length function of an irreducible action of $G$ on an $\mathbb{R}$-tree $T$ and suppose $l\circ\Phi=\lambda l$ where $\lambda>1$.
	If arc stabilisers of $T$ are finite and the number of $\stab(x)$-orbits of directions at branch points $x$ are uniformly bounded, then $G$ has $R_{\infty}(\Phi)$.
\end{prop}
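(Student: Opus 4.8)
The plan is to argue by contrapositive: assuming $G$ does *not* have $R_\infty(\Phi)$, I will produce from the tree $T$ (which satisfies $l \circ \Phi = \lambda l$ with $\lambda > 1$) a contradiction with the finiteness of arc stabilisers and the uniform bound on orbits of directions at branch points. The first step is to pass to the mapping torus: by Lemma~\ref{conjclassesofmappingtorus}, failure of $R_\infty(\varphi)$ means the set $\{tx : x \in G\}$ falls into finitely many $M_\varphi$-conjugacy classes. Next, I would use Corollary~\ref{cor action of Mphi}: since $l \circ \Phi = \lambda l$, the length function $l$ is not $\Phi$-invariant, so $M_\varphi$ does not act on $T$ itself; instead, rescaling shows that $t$ acts on the space of length functions as multiplication by $\lambda$, and the natural object is the action of $M_\varphi$ on the tree $\varphi T$ via the topological representative $f : T \to \varphi T$ realising $\varphi$ (Definition~\ref{topological representative}). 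The element $t$ then acts as a homothety of ratio $\lambda > 1$ on $T$ (an isometry $T \to \varphi T$ composed with the rescaling), and one knows that such a homothety has a unique fixed point $x_0 \in T$ (the attracting point), since $\lambda > 1$.

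The heart of the argument is then a pigeonhole/accumulation argument near $x_0$. Since translation length in $M_\varphi$ scales, one computes that for $x \in G$ the element $tx$ is elliptic-or-hyperbolic on the limiting structure and, by the homothety action, $\|tx\|$ relates to the displacement $d(x_0, x_0 \cdot (tx))$; the finitely-many-conjugacy-classes hypothesis forces infinitely many elements $x_n \in G$ with $tx_n$ all conjugate, hence with uniformly bounded "combinatorial data" at $x_0$. Concretely, I would look at the germ (direction) at $x_0$ determined by the segment $[x_0, x_0 g]$ for the relevant elements $g$, use that $t$ permutes directions at $x_0$ with the homothety pushing everything toward $x_0$, and use the uniform bound on $\stab(x_0)$-orbits of directions at the branch point $x_0$ to conclude that only finitely many distinct germs arise. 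Combined with finiteness of arc stabilisers — which prevents a whole subgroup from fixing a nondegenerate segment emanating from $x_0$ and thereby forces distinct elements to produce distinct germs or distinct translation lengths — this yields that only finitely many twisted conjugacy classes could have collapsed together, contradicting the assumption that there are finitely many in total while $G$ is infinite (note $G$ must be infinite, else the irreducible action would be trivial).

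The main obstacle I expect is making the "homothety with attracting fixed point" picture rigorous: $T$ need not be complete, the fixed point $x_0$ need not be a branch point a priori, and $f$ is only a topological representative (sending edges to edge paths), not literally an isometry, so one must either pass to the metric completion, or replace $T$ by its minimal invariant subtree under the semigroup generated by $G$ and the homothety, and check that $x_0$ lands in a branch point where the hypothesis on directions applies. A secondary subtlety is bookkeeping the two regimes — elements $tx$ that become hyperbolic versus elliptic in the limit — and checking the translation length bookkeeping in each; this is where the Circle-Dot Lemma (Lemma~\ref{circledot}) and Lemma~\ref{treefacts2} do the routine work once the homothety fixed point is in hand. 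I would structure the write-up so that the homothety existence and location of $x_0$ is an explicit preliminary lemma, after which the pigeonhole argument on directions at $x_0$ is comparatively short.
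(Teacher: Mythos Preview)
The paper does not prove this proposition: it is quoted verbatim from \cite{Levitt2000} and used as a black box. So there is no ``paper's own proof'' to compare against; what follows is an assessment of your outline against the actual Levitt--Lustig argument.

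Your sketch has the right opening move --- the homothety $H\colon T\to T$ of ratio $\lambda$ coming from Theorem~\ref{cullermorgan}, and its fixed point in the completion $\bar T$ --- but then drifts into a genuine confusion. You write that ``$\|tx\|$ relates to the displacement'' and speak of $tx$ being ``elliptic-or-hyperbolic''. But $M_\varphi$ does \emph{not} act on $T$ by isometries when $\lambda\neq 1$ (as you yourself note), so $tx$ has no translation length and the Circle--Dot Lemma is irrelevant here. What is true is that each representative $\alpha=\iota_a\varphi\in\Phi$ gives a homothety $H_\alpha=a\cdot H$ of ratio $\lambda$, with its own fixed point $Q_\alpha\in\bar T$, and that isogredient representatives have fixed points in the same $G$-orbit. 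The Levitt--Lustig proof works with this map $\alpha\mapsto Q_\alpha$ directly, not with any length function on $M_\varphi$. (Also: Definition~\ref{topological representative} is for simplicial trees and is not what produces $H$; Theorem~\ref{cullermorgan} is.)

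The larger gap is that your ``pigeonhole/accumulation argument near $x_0$'' is where the entire content lies, and you have not said what it is. In \cite{Levitt2000} the argument is constructive, not by contradiction: one uses the $H$-orbit of a branch point to manufacture an explicit infinite family of representatives $\alpha_n\in\Phi$ whose fixed points $Q_{\alpha_n}$ lie in pairwise distinct $G$-orbits (or give distinct direction data at a common orbit), and it is precisely the two hypotheses --- finite arc stabilisers (to control how many $Q_\alpha$ can coincide) and the uniform bound on orbits of directions (to control how many isogredience classes share a fixed point) --- that make this family infinite. Your sketch asserts that these hypotheses ``force distinct elements to produce distinct germs or distinct translation lengths'' but gives no mechanism; this is the step you would actually need to supply.
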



\section{Train Track Maps and free products}

In this section  we show that free products of groups have the $R_{\infty}$ property using the machinery of (Relative) Train Track maps. These were first used in \cite{Bestvina1992} in the context of free group automorphisms and then in \cite{Bestvina2000} to define and analyse the stable (or forward) limit tree. The methods in the former paper were extended to free products in \cite{Collins1994}. However, we are using the treatment in \cite{Francaviglia2011}, \cite{Francaviglia2017} and \cite{Francaviglia2024} because of the more detailed relation to the stable limit tree. We also use some results from \cite{Horbez2017} to deal with the technical conditions mentioned in the introduction; specifically, this allows us to verify the condition that stabilisers of branch point act on the set of directions with finitely many orbits. 

As always, the goal is to achieve condition $R_{\infty}$ by looking at the action of the mapping torus on some tree and applying the argument of \cite[Propositions 3.1 and 3.2]{Levitt2000}.

\subsection{Free factor systems}		

\begin{defn}[Free Factor System]
	Let $G$ be a group which splits as a free product.
	A {free factor system} of $G$ is a pair $\mathcal{F}=(\{G_{1},\dots,G_{k}\},r)$ such that $G=G_{1}\ast\dots\ast G_{k}\ast F_{r}$ where each $G_{i}$ is non-trivial and $F_{r}$ is the free group of rank $r$.
\end{defn}

Following \cite[Definitions 3.2 and 3.12]{Guirardel2007}  we make the following definition.

\begin{defn}
	Let $G$ be a group which splits as a free product and $\mathcal{F}=(\{G_{1},\dots,G_{k}\},r)$ a free factor system for $G$. We denote by $D(\mathcal{F})$ the set of all simplicial $G$-trees (up to equivariant isometry) whose elliptic subgroups are subgroups of conjugates of the $G_i$. That is $T \in   D(\mathcal{F})$ if and only if for every $H \leq G$, $H$ fixes a point in $T$ exactly when $H$ is a subgroup of some conjugate of some $G_i$. 
	
	Further, we let $D_1(\mathcal{F})$ denote the subset of $D(\mathcal{F})$ consisting of those trees whose edge stabilisers are all trivial. 
\end{defn}

\begin{defn}
	Let $\mathcal{F}=(\{G_{1},\dots,G_{k}\},r)$ be a free factor system of some group $G$, and let $\varphi\in\aut(G)$.
	We say that $\mathcal{F}$ is:
	\begin{itemize}
		\item  a {Grushko decomposition} of $G$ if each $G_{i}$ is freely indecomposable and not infinite cyclic.
        \item {proper} if $k+r\ge2$.
		\item {maximal} if for any proper free factor system $\mathcal{F}'=(\{G_{1}',\dots,G_{l}'\},s)$ of $G$, there exists some $i$ so that $G_{i}$ is not contained in any conjugate of any $G_{j}'$.
		
		\item {$\varphi$-invariant} if for each $i$ there is some $j$ and some $g_{j}\in G$ so that $\varphi(G_{i})=G_{j}^{g_{j}}$.
	\end{itemize}
	We say that $\varphi$ is {irreducible} with respect to $\mathcal{F}$ if $\mathcal{F}$ is a maximal, proper, $\varphi$-invariant free factor system of $G$.
\end{defn}

\begin{rem}
	Observe that a Grushko decomposition of a group $G$ is $\varphi$-invariant for any $\varphi\in\aut(G)$. See \cite[III, Proposition 3.7]{Lyndon1977} for a statement of Grushko's Theorem. 
\end{rem}

The goal is to produce a very well behaved topological representative (see Definition \ref{topological representative}) for an automorphism of a free product.

\begin{defn}
	Let $T$ be a simplicial $G$-tree with finitely many (oriented) edge orbits, rerpresented by edges $e_1, \ldots, e_n$. Suppose that $f: T \to \varphi T$ is a topological representative. Then the transition matrix of $f$ is the $n \times n$ matrix, $M_f$, whose $(i,j)$-entry is the number of times the edge path $f(e_i)$ crosses the orbit of $e_j$ (in either direction). 
\end{defn}

\begin{defn}
	Let $T$ be a simplicial $G$-tree with finitely many orbits of edges and $f: T \to \varphi T$ a topological representative for some $\varphi \in \aut(G)$. Then $f$ is called a train track map if for every $k \geq 1$, $f^k$ is locally injective on the interior of edges. 
	
	It is called a metric train track map if, in addition, $T$ admits a $G$-equivariant path metric such that the length of $f(e)$ is $\lambda$ times the length of $e$, where $e$ is any edge of $T$ for some fixed $\lambda\in\mathbb{R}_{\ge0}$. 
\end{defn}

We refer the reader to \cite{Bestvina1992} for the background on how irreducible matrices and their Perron--Frobenius eigenvalues are used in the theory of train tracks. 

From a train track we can construct the (forward) stable limit tree as in \cite{Bestvina2000}, although we will base the following more closely on \cite{Gaboriau1998}. Namely, given a metric train track map, $f: T \to \varphi T$ with associated constant $\lambda > 1$, set $d_{\infty} (x,y) = \lim_{n \to \infty} \frac{d_T(f^n(x), f^n(y))}{\lambda^n}$, for $x, y \in T$. This converges due to monotonicity and is a pseudo-metric on $T$. Therefore on taking a suitable quotient, one obtains a metric space $T_{\infty}$ with metric (which we will still name), $d_{\infty}$. 

It is straightforward to see that the quotient map, $T \to T_{\infty}$ is $G$-equivariant and 1-Lipschitz and that $T_{\infty}$ is a $0$-hyperbolic metric space on which $G$ acts isometrically. In other words, $T_{\infty}$ is an $\R$-tree on which $G$ acts. Moreover, the train track property immediately implies that this quotient map is an isometry restricted to the edges of $T$. 

It also follows quickly that for any $\gamma \in G$, $\|\gamma\|_{T_{\infty}} = \lim_{n \to \infty} \frac{\|\gamma \varphi^n\|_T}{\lambda^n}$. It is well known that given a (metric) train track there is a hyperbolic $g \in G$ such that $f^k$ is injective on $A_g$ for all $0 <k \in \Z$, \cite[Corollary 8.12]{Francaviglia2011} (this is also found in \cite{Bestvina1992}). Such a $g$ is called a `legal' group element. It now follows that if $g$ is legal, $0 < \| g \|_{T_{\infty}} = \|g\|_T$. 

One can now proceed as in \cite[Lemma II.7]{Gaboriau1998}, to see that $T_{\infty}$ is a minimal and non-trivial $G$-tree. In fact we record more.

\begin{thm}
	\label{train tracks}
	Let $G$ be a non-trivial free product which is not the infinite dihedral group. Let $\Phi \in \out(G)$.


Then there exists a maximal, proper, $\varphi$-invariant free factor system $\mathcal{F}$ and a simplicial $G$-tree $T \in D_1(\mathcal{F})$ admitting a train track representative $f$ for $\varphi$, with associated constant $\lambda \geq 1$. Furthermore:
    
	\begin{enumerate}[(i)]
		\item $\lambda$ is the Perron--Frobenius eigenvalue of the transition matrix for $f$. 
		\item $\lambda = 1$ if and only if $f$ is an isometry. In this case the mapping torus $M_{\varphi}$ acts isometrically on $T$.  
		\item If $\lambda >1$ then $\lim_{n \to \infty} \frac{\varphi^n T} {\lambda^n}$ is a $G$-tree, $T_{\infty}$, called the stable limit tree for $\varphi$. 
		
		\item For all $g \in G$, $\|g \varphi\|_{T_{\infty}} = \lambda \| g \|_{T_{\infty}}$. In particular, there exists a $G$-equivariant homothety, $H: T_{\infty} \to \varphi T_{\infty}$ of stretching factor $\lambda$, by Theorem~\ref{cullermorgan}. 
        \item The stable limit tree is a non-trivial, irreducible $\R$-tree. 
		\item If $1 \neq g \in G$ acts elliptically on $T$, then $g$ fixes a unique point of $T_{\infty}$. In particular, non-trivial elements of arc stabilisers of $T_{\infty}$ act hyperbolically on $T$. 
        \item There exist finitely many segments, $f_1, \ldots, f_r$ in $T_{\infty}$ whose union of $G$-orbits covers the whole of $T_{\infty}$. 
	\end{enumerate}
\end{thm}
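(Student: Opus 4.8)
The plan is to run the relative train track machinery for free products, record items (i) and (ii) from Perron--Frobenius theory, and then in the expanding case $\lambda>1$ read off (iii)--(iv) from the stable tree construction already recalled above, leaving the irreducibility statement (v) and the arc-stabiliser statement (vi) as the parts requiring real work; (vii) will then be a one-line consequence. First I would fix $\varphi\in\Phi$ and invoke the relative train track theory for free products of \cite{Francaviglia2011,Francaviglia2017,Francaviglia2024}: starting from a Grushko decomposition of $G$ (which is $\varphi$-invariant for free, as noted above) and passing, if $\varphi$ is reducible relative to it, to a maximal proper $\varphi$-invariant free factor system $\mathcal{F}$ --- possible because the relevant complexity of free factor systems satisfies an ascending chain condition --- one obtains a tree $T\in D_1(\mathcal{F})$ with finitely many edge orbits carrying a train track representative $f\colon T\to\varphi T$. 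Since $f$ is a train track map there is no cancellation under iteration, so $M_{f^k}=M_f^{\,k}$; the $G$-equivariant metric for which $|f(e)|=\lambda|e|$ is then exactly the one given by a positive Perron--Frobenius eigenvector of the (irreducible, by maximality of $\mathcal{F}$) matrix $M_f$, and $\lambda$ is its eigenvalue, which is (i). For (ii), an isometry visibly has $\lambda=1$; conversely if $\lambda=1$ then $f$ restricts to an isometric embedding on each edge, hence (a standard fact for train track maps with $\lambda=1$) is a simplicial isomorphism, i.e.\ an isometry of $T$, and then letting $t$ act on $T$ as $f$ and checking that $t^{-1}gt$ and $g\varphi$ induce the same isometry --- which is precisely $G$-equivariance of $f\colon T\to\varphi T$ --- gives the isometric $M_\varphi$-action.

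Now assume $\lambda>1$. Item (iii) is the construction of $T_\infty=\lim_n\varphi^nT/\lambda^n$ recalled above, with non-triviality and minimality coming from the existence of a legal element together with Gaboriau's argument \cite{Gaboriau1998}; this also gives $\|\gamma\|_{T_\infty}=\lim_n\|\gamma\varphi^n\|_T/\lambda^n$. Applying that formula with $\gamma=g\varphi$ and reindexing the limit yields $\|g\varphi\|_{T_\infty}=\lambda\|g\|_{T_\infty}$, equivalently the length function of $\varphi T_\infty$ equals $\lambda$ times that of $T_\infty$; granting irreducibility of $T_\infty$, Theorem~\ref{cullermorgan} applied to $T_\infty$ and a $1/\lambda$-rescaling of $\varphi T_\infty$ produces the equivariant homothety $H$ of ratio $\lambda$, which is (iv).

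The substantive part is (v)--(vi). For (vi): since $\mathcal{F}$ is $\varphi$-invariant, $\varphi$ permutes the conjugacy classes of the $G_i$, so if $g$ is elliptic on $T$ --- equivalently, after passing to a reduced tree, $g$ lies in a conjugate of some $G_i$ and fixes a unique vertex of $T$ --- then every $g\varphi^n$ is elliptic on $T$ as well, whence $\|g\|_{T_\infty}=\lim_n\|g\varphi^n\|_T/\lambda^n=0$; the equivariant quotient map $q\colon T\to T_\infty$ carries the fixed vertex of $g$ to a point fixed by $g$, so $\fix_{T_\infty}(g)\neq\emptyset$. That this fixed point is unique --- equivalently, that no non-trivial element of a conjugate of a $G_i$ fixes a non-degenerate arc of $T_\infty$ --- is the content of Lemma~\ref{trivial arc}, which I would prove by a bounded-cancellation argument: if $g$ fixed an arc of $T_\infty$, that arc would contain the isometric $q$-image of a sub-segment of some edge of $T$ (the quotient map being an isometry on edges), and pulling this back through $f^n$ --- which stretches the legal part of any long path by $\lambda^n$ while folding only a bounded amount --- would force $g$ to fix an unbounded subset of $T$, contradicting $\fix_T(g)$ being a single vertex. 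Granting (vi), a non-trivial element of an arc stabiliser of $T_\infty$ cannot be elliptic on $T$ (it fixes a whole arc, not a point), so it is hyperbolic on $T$, and the same cancellation analysis rules this out too; hence arc stabilisers of $T_\infty$ are trivial. Since $T_\infty$ is minimal and non-trivial, a reducible action would, by Proposition~\ref{irreducible finite edge stabs}(ii), force $G$ to be virtually abelian, whereas a non-trivial free product that is not the infinite dihedral group contains a non-abelian free subgroup --- a contradiction --- so $T_\infty$ is irreducible, proving (v).

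Finally (vii) is immediate: $q\colon T\to T_\infty$ is a surjective $G$-equivariant map restricting to an isometric embedding on every edge, so taking representatives $e_1,\dots,e_r$ of the finitely many edge orbits of $T$, the non-degenerate segments $f_i:=q(e_i)$ have the property that every point of $T_\infty$ lies in the $G$-orbit of some $f_i$. I expect the main obstacle to be (vi): establishing uniqueness of the fixed point of an elliptic element --- i.e.\ ruling out non-trivial parabolic arc stabilisers of $T_\infty$ --- which is the point the introduction identifies as treated carelessly in the literature, and which moreover powers the irreducibility argument in (v).
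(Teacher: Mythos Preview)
Your treatment of (i)--(iv) and (vii) matches the paper's, which largely cites \cite{Francaviglia2011,Francaviglia2024} and the Gaboriau-style construction recalled before the theorem. For (v) you both reduce to Proposition~\ref{irreducible finite edge stabs}(ii) via triviality of arc stabilisers, and for (vi) the paper simply cites \cite[Lemma~2.13.6]{Francaviglia2024} rather than giving a bounded-cancellation sketch.

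The gap is in your handling of arc stabilisers, which you need for (v). You correctly observe that (vi) disposes of the elliptic case, and then assert that ``the same cancellation analysis rules this out too'' for hyperbolic elements. But your cancellation sketch terminates with ``contradicting $\fix_T(g)$ being a single vertex''---and that is exactly what is unavailable when $g$ is hyperbolic on $T$, so the same analysis does not carry over. The paper's Lemma~\ref{trivial arc} takes a genuinely different route for this case: by (vi) a non-trivial arc stabiliser $S$ acts freely on the simplicial tree $T$ and is therefore free; smallness of the limit action (\cite[Theorem~5.3]{Culler1987}) then forces $S$ to be infinite cyclic; the setwise stabiliser $\bar S$ of its axis in $T$ is a \emph{maximal} virtually cyclic subgroup; and finally one applies the homothety $H$ together with the finite orbit-covering property (vii) (packaged as Lemma~\ref{arc segments}) to find more distinct cosets of $S$ in $\bar S$ than the index permits. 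None of this is a cancellation argument, and your proposal supplies nothing in its place.

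You have also inverted the logical dependence and the location of the difficulty. In the paper, (vi) is input to Lemma~\ref{trivial arc}, not a consequence of it; and the ``careless point in the literature'' flagged in the introduction is precisely the hyperbolic (not the elliptic/parabolic) contribution to arc stabilisers---that is where the substantive work lies, and where your outline is currently silent.
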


\begin{proof}

As in \cite[Corollary 8.25]{Francaviglia2011}, there exists a maximal  $\varphi$-invariant free factor system, $\mathcal{F}$ so that $\varphi$ is $\mathcal{F}$-irreducible. Again using \cite[Theorem 8.24]{Francaviglia2011}, $\varphi$ will admit a train track representative. Just as in \cite{Bestvina1992}, the transition matrix of this map has a Perron--Frobenius eigenvalue, $\lambda \geq 1$, which is equal to the Lipschitz constant of the train track map (in \cite{Bestvina1992} one must choose lengths for the edges, whereas in \cite{Francaviglia2011} this is implicit in the definition. This is the distinction between topological and metric train track maps). 

Moreover, the theory of irreducible matrices implies that $\lambda=1$ if and only if the transition matrix is a permutation matrix, implying that $f$ is an isometry. 

This proves (i) and (ii). For (iii) and (iv) we refer the reader to \cite[Lemma 2.14.1]{Francaviglia2024}. Point (v) is well known to experts but we can also deduce it from Lemma~\ref{trivial arc} and Theorem~\ref{irreducible finite edge stabs}, since the only non-trivial virtually abelian free product is the infinite dihedral group. Point (vi) is \cite[Lemma 2.13.6]{Francaviglia2024}.

Finally, for part (vii), we argue as above (following \cite{Gaboriau1998}, but see also \cite[Section 2.13]{Francaviglia2024}). That is, since the quotient map $T \to T_{\infty}$ is an isometry restricted to edges of $T$ we may take the $f_i$ to be the isometric images of the edges of $T$. 

\end{proof}

The following are also well-known to experts and the proofs are the same as in \cite{Gaboriau1998}, adapted to the free product case.

\begin{lemma}
\label{arc segments}
For any $p \in \N$ there exists a constant $L_p$ such that any arc $\sigma$ in $T_{\infty}$ whose length is  greater than $L_p$, will contain at least $p$ non-trivial, disjoint subsegments which are all in the same $G$-orbit. 
\end{lemma}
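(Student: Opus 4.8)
The plan is to use the covering of $T_\infty$ by finitely many $G$-orbits of segments $f_1, \ldots, f_r$ from Theorem~\ref{train tracks}(vii), together with a pigeonhole argument on a long arc. First I would observe that any arc $\sigma$ in $T_\infty$ of length $\ell$ can be subdivided, using the segments $f_i \cdot G$ covering $T_\infty$, into a concatenation of subsegments each contained in some translate $f_i g$; in fact, since the $f_i$ are finitely many segments of some minimal positive length (they are isometric images of edges of the simplicial tree $T$, hence have lengths bounded below by the minimum edge length $\epsilon > 0$), the number of subsegments needed to cover $\sigma$ is at least $\ell/\delta$ for $\delta = \max_i \mathrm{length}(f_i)$, but more usefully, $\sigma$ crosses at least $\lfloor \ell / \delta \rfloor$ of these pieces.

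Next I would partition these pieces according to which $G$-orbit they belong to: there are only $r$ orbits. By pigeonhole, if $\sigma$ meets at least $N$ pieces then at least $\lceil N/r \rceil$ of them lie in a single orbit $f_i \cdot G$. To get \emph{disjoint} subsegments in the same orbit, I would be a little careful: consecutive pieces in the subdivision of $\sigma$ share at most an endpoint, so they are essentially disjoint (they have disjoint interiors), and any selection of pieces from the subdivision consists of segments with pairwise disjoint interiors; taking them slightly shorter, or just noting that non-trivial subsegments with disjoint interiors suffice for the intended application, handles this. So I would set, for a given $p$, the constant $L_p := (rp + 1)\delta$ (or a similar explicit value), so that an arc of length greater than $L_p$ crosses more than $rp$ of the covering pieces, hence contains at least $p$ of them in a common $G$-orbit, and these $p$ pieces are non-trivial, disjoint (in interiors), and all in the same orbit.

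The main obstacle I anticipate is bookkeeping around the word ``disjoint'': the natural subdivision gives subsegments overlapping at endpoints, and one must either invoke the convention (already used elsewhere in the paper, e.g. the Circle-Dot Lemma allows $q = pg$) that this is harmless, or shrink each chosen subsegment slightly to make them genuinely disjoint while keeping them non-trivial — which is fine since each has length at least $\epsilon > 0$. A secondary point requiring care is that the subdivision of $\sigma$ into pieces lying in translates $f_i g$ should be taken so that each piece is a \emph{maximal} sub-arc of $\sigma$ inside a single translate, guaranteeing that the pieces are non-degenerate and that their number is controlled from below by $\ell/\delta$; this uses that $T_\infty$ is covered by the $G$-orbits of the $f_i$ and that these are arcs of bounded length. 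Once these technical conventions are fixed, the argument is a direct pigeonhole and the constant $L_p$ depends only on $p$, $r$, and the maximum length $\delta$ of the $f_i$.
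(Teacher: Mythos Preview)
Your pigeonhole is applied to the wrong equivalence relation. You partition the pieces of $\sigma$ according to which covering family $f_i \cdot G$ contains them, and then assert that pieces in the same bin are ``in the same $G$-orbit''. But a sub-arc of a translate of $f_i$ need not be a $G$-translate of $f_i$, nor of any other piece in that bin: two pieces $z, z'$ with $z g \subset f_i$ and $z' g' \subset f_i$ can land on different sub-intervals of $f_i$, with different lengths, and then $z$ and $z'$ are certainly not in the same $G$-orbit. So sorting into $r$ bins gives nothing about the statement you want. (Your parenthetical about edge lengths being bounded below by some $\epsilon > 0$ does not help, because there is no reason the maximal sub-arcs of $\sigma$ inside a single translate should be \emph{full} translates of an $f_i$.)

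The paper repairs exactly this point: after the initial subdivision $\sigma = z_1 \cup \cdots \cup z_n$ with $z_i g_i \subset f_{k(i)}$, one further subdivides so that for $i \neq j$ the images $z_i g_i$ and $z_j g_j$ either coincide or meet in at most a point. Now the \emph{distinct} images form a family of essentially disjoint sub-arcs of $f_1 \cup \cdots \cup f_r$, so their total length is at most $l_1 + \cdots + l_r$. If no image is hit more than $p$ times, then $|\sigma| = \sum |z_i| \le p(l_1 + \cdots + l_r)$; contraposing, a sufficiently long $\sigma$ forces many $z_i$ to have the \emph{same} image, hence to be genuine $G$-translates of one another. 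The essential move you are missing is this length-counting argument that upgrades ``lie in the same covering family'' to ``are equal after a group translation''.
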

\begin{proof}
     By Theorem~\ref{train tracks} (vii), there are finitely many arcs, $f_1, \ldots, f_r$ whose $G$-orbits cover $T_{\infty}$. Moreover, any arc segment $\sigma$ can be subdivided into subsegments, $\sigma = z_1 \cup \ldots \cup z_n$ such that for each $i$ there is a group element $g_i$ such that $z_i g_i$ is a subsegment of one of $f_1, \ldots, f_r$. As this is a subdivision, the length of $\sigma$ is equal to the sum of the lengths of the $z_i$ and two distinct $z_i$ can intersect in at most a single point. (If numbered sequentially, $z_i$ intersects precisely $z_{i-1}$ and $z_{i+1}$ in a single point and is disjoint from the rest).  
	
	Now, after possibly further subdividing, we may assume that for $i \neq j$ either $z_ig_i = z_j g_j$ or their intersection is at most a single point. 
	
	Next let us suppose that there is an integer $p$ such that, 
	$$
	\max_i |\{ j \ : z_ig_i = z_j g_j \ \}| \leq p.
	$$
	It is then clear that the sum of the lengths of the $z_i g_i$ is at most $p(l_1+\ldots+l_r)$, where $l_j$ is the length of $f_j$. But since each $g_i$ is an isometry, this means that the length of $\sigma$ is also at most $p(l_1+\ldots+l_r)$. 
	
	Therefore, if $\sigma$ is any segment of length greater than $3p(l_1+\ldots+l_r)$ then we can ensure that $\sigma$ contains at least $3p$ subsegments, $c_1, \ldots ,c_{3p}$ which are all in the same $G$ orbit. Since each $z_i$ meets at most $z_{i-1}$ and $z_{i+1}$, we deduce that at least $p$ of these must be disjoint. 
\end{proof}

\begin{lemma}
	\label{trivial arc}
	If $G$ is finitely generated, then arc stabilisers in $T_{\infty}$ are trivial.
\end{lemma}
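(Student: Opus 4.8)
The plan is to exploit Lemma~\ref{arc segments} to convert a non-trivial arc stabiliser into a uniform bound on lengths of stabilised segments, and then derive a contradiction with the existence of arbitrarily long stabilised arcs. First I would suppose, for contradiction, that some non-trivial element $g \in G$ stabilises a non-degenerate arc $\sigma_0$ in $T_\infty$. By Theorem~\ref{train tracks}(vi), $g$ then acts hyperbolically on the simplicial tree $T$, so it has an axis $A_g \subseteq T$ of positive translation length. The key point is that the action of the cyclic group $\langle g \rangle$ on $T$ — with trivial edge stabilisers, since $T \in D_1(\mathcal{F})$ — severely restricts which subgroups of $G$ can fix arcs in $T_\infty$: any element fixing an arc pointwise must commute-like interact with $g$, and in a free product with trivial edge stabilisers in $T$ the centraliser of a hyperbolic element is infinite cyclic. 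More precisely, I would show that the pointwise stabiliser of any non-degenerate arc is contained in the centraliser of $g$ in $G$, hence is virtually cyclic, in fact cyclic, and therefore that a non-trivial arc stabiliser forces a whole infinite cyclic subgroup $\langle g \rangle$ to fix the (possibly smaller) arc $\fix(g) \cap T_\infty$.

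Next I would bound the length of any arc fixed by $g$. Since $G$ is finitely generated, the action on $T$ is cocompact, so there is an upper bound $D$ on the diameter of a fundamental domain; equivalently, there is a bound on how long an arc in $T$ can be before it must contain a translate of itself under $\langle g\rangle$. Translating this through the $1$-Lipschitz, edge-isometric quotient map $T \to T_\infty$ and Lemma~\ref{arc segments}, I would argue that if $g$ fixed an arc $\sigma$ in $T_\infty$ of length exceeding $L_2$, then $\sigma$ contains two disjoint subsegments $c_1, c_2$ in the same $G$-orbit, say $c_2 = c_1 h$ for some $h \in G$. Then $h$ maps one sub-arc fixed by $g$ to another sub-arc fixed by $g$, so $h^{-1}gh$ and $g$ both fix $c_1$; iterating, one produces infinitely many conjugates of $g$ fixing a common segment, or a single element $h$ with unbounded behaviour, forcing $h$ to be hyperbolic (via the Circle-Dot Lemma, Lemma~\ref{circledot}, applied to suitable endpoints) with axis meeting $\fix(g)$ in a long segment. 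This contradicts Theorem~\ref{train tracks}(vi): elements fixing points of $T_\infty$ act elliptically on $T$, whereas a hyperbolic $h$ whose axis in $T_\infty$-fixed-set behaviour is incompatible with that.

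The cleanest route, which I would actually pursue, is: assume $g \neq 1$ fixes a non-degenerate arc $I \subseteq T_\infty$. Using the homothety $H \colon T_\infty \to \varphi T_\infty$ of factor $\lambda \geq 1$ from Theorem~\ref{train tracks}(iv) — or directly the $G$-action — I would find conjugates $g_1, g_2, \ldots$ of $g$ fixing a common sub-arc $J$ of $I$, by applying Lemma~\ref{arc segments} with large $p$ to an arc that $g$ fixes after passing to a suitable long fixed arc (enlarging $I$ using that $\fix(g)$ is a subtree). Each such conjugate $g_i = w_i^{-1} g w_i$ fixes $J$, and since $T \in D_1(\mathcal{F})$ has trivial edge stabilisers, the $g_i$ together with $g$ generate a subgroup all of whose elements are elliptic on $T_\infty$ but with the $g_i$ being hyperbolic on $T$; picking two of them whose axes in $T$ are disjoint (possible unless they all have a common axis, which would force a common fixed end and hence, by Proposition~\ref{irreducible finite edge stabs}, that $G$ is virtually cyclic — impossible for a non-dihedral free product), I would use Lemma~\ref{disjoint axes} to produce a hyperbolic element of $G$ that nonetheless fixes $J \subseteq T_\infty$, contradicting Theorem~\ref{train tracks}(vi).

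The main obstacle will be making the "produce infinitely many conjugates of $g$ fixing a common sub-arc" step precise: one needs to ensure that the group elements $g_i$ obtained from the repeated-subsegment argument of Lemma~\ref{arc segments} genuinely move $\fix(g)$ around rather than all coinciding, and that at least two of their axes in $T$ can be made disjoint. I expect this to require a careful bookkeeping argument combining cocompactness of the $G$-action on $T$ (to bound segment lengths before repetition) with the structure of hyperbolic elements in a free product with trivial edge stabilisers — essentially reproducing, in the free-product setting, the Gaboriau--Levitt--Lustig argument that a finitely generated group acting with this combinatorial data cannot have non-trivial arc stabilisers on the limit tree.
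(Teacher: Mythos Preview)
Your proposal contains a genuine misreading of Theorem~\ref{train tracks}(vi) that undermines the claimed contradiction. That item says that if $1 \neq g$ is \emph{elliptic on $T$} then $g$ fixes a \emph{unique} point of $T_\infty$; in particular, non-trivial arc stabilisers of $T_\infty$ act \emph{hyperbolically} on $T$. You cite this correctly in your first paragraph, but then in Routes~2 and~3 you invert it: producing a $T$-hyperbolic element that fixes an arc of $T_\infty$ is exactly what (vi) predicts, not a contradiction. So the endgame of your ``cleanest route'' does not close.

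There is a second gap in the dichotomy step. Having all the conjugates $h_i g h_i^{-1}$ share a common axis in $T$ only says that each $h_i$ lies in the setwise stabiliser $\bar{S}$ of $A_g$; it places no constraint on $G$ itself, so you cannot conclude via Proposition~\ref{irreducible finite edge stabs} that $G$ is virtually cyclic. You are confusing the subgroup generated by the $h_i$ (or the $g_i$) with all of $G$.

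The paper's argument proceeds differently and avoids both pitfalls. First it uses that $T_\infty$ is a limit of small actions (trivial edge stabilisers in $T$), hence small by \cite[Theorem~5.3]{Culler1987}; combined with (vi), this forces the arc stabiliser $S$ to be a free group with no rank-$2$ free subgroup, i.e.\ infinite cyclic. Then, letting $\bar{S}$ be the maximal virtually cyclic subgroup containing $S$ (the setwise stabiliser of the axis $L = A_g$) with index $p = [\bar{S}:S]$, one applies Lemma~\ref{arc segments} via the homothety to find $p+1$ disjoint sub-arcs $c_0,\dots,c_p$ of $c$ in the same $G$-orbit, say $c_i = c_0 g_i$. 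One then shows each $g_i \in \bar{S}$ (maximality of $\bar{S}$ forces $\bar{S}^{g_i}=\bar{S}$, hence $Lg_i = L$), but the $g_i$ lie in distinct cosets of $C_0 \geq S$ in $\bar{S}$ because the $c_i$ are disjoint --- contradicting $[\bar{S}:S]=p$. The contradiction is thus a pigeonhole count inside $\bar{S}$, not an appeal to (vi). Your idea of using Lemma~\ref{arc segments} to manufacture many translates is on the right track, but the target should be smallness (or this coset count), not (vi).
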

\begin{rem}
    The hypothesis that $G$ be finitely generated is not needed here. It is used to deduce that the limit of small actions is small. However, this will always be true of a limit of edge-free actions and we include the hypothesis for convenience of referencing. 
\end{rem}

\begin{rem}
    Note that we are implicitly assuming that the $\lambda$ from Theorem~\ref{train tracks} is greater than 1. 
\end{rem}

\begin{proof} Consider some non-trivial arc $c$ of $T_{\infty}$ and let $S$ be the pointwise stabiliser of $c$. By Theorem~\ref{train tracks}(vi), every non-trivial element of $S$ acts on $T$ as a hyperbolic isometry. In particular, $S$ acts freely on the simplicial tree, $T$, and so is a free group. Thus, in order to show that $S$ is trivial it is enough to show that $S$ is finite. 

We therefore argue by contradiction and suppose that $S$ is infinite.

	Since $T\in\mathcal{D}_{1}(\mathcal{F})$ then edge stabilisers in $T$ are trivial, hence the action of $G$ on $T$ (and also on each $\varphi^{n}T$) is {small}.
	Since a limit of small actions is itself small, by Culler--Morgan \cite[Theorem 5.3]{Culler1987}, $S$ cannot contain a free subgroup of rank 2.

    However, we have already noted that $S$ is a free group and so for $S$ to be infinite it would have to be an infinite cyclic group. Therefore the minimal $S$-invariant subtree of $T$ is a line $L$ on which $S$ acts by translation. (Note that all the non-trivial elements of $S$ have axis equal to $L$, by Lemma~\ref{treefacts} (v)). 


	
	Let $\bar{S}$ be the (setwise) stabiliser of $L$. Then $\bar{S}$ is virtually cyclic as its minimal invariant tree is $L$, by Lemma~\ref{treefacts} (iv). Moreover, it is also a maximal virtually cyclic group as any virtually cyclic group containing $S$ must preserve $L$, since any hyperbolic element in such a subgroup has a power which lies in $S$ and hence has axis equal to $L$. (Recall that an invariant subtree for a subgroup is the union of all the hyperbolic axes, by Proposition~\ref{minimal subtree}.)

    Let $H:T_{\infty}\to \varphi T_{\infty}$ be the homothety described in Theorem \ref{train tracks}(iv) and consider $H^{k}(c)$ for some $k$. Let $p$ be the index of $S$ in $\bar{S}$. 
    
    By Lemma~\ref{arc segments}, we may choose $k$ sufficiently large so that $H^k(c)$ contains $p+1$ disjoint non-trivial arcs which are in the same $G$-orbit. Since $H$ is equivariant, $c$ will also contain $p+1$ disjoint arcs, $c_0, \ldots, c_p$ in the same $G$ orbit. Hence there exist group elements, $g_1, \ldots, g_p$ such that $c_0 g_i = c_i$. (Set $g_0=1$.)

Let $C_i$ denote the pointwise stabiliser of $c_i$. As argued above, each $C_i$ is cyclic (since it is small and acts freely on $T$). Note that $S$ is a subgroup of $C_i$ and hence $C_i$ is a subgroup of $\bar{S}$, since the latter is a maximal virtually cyclic subgroup of $G$. 

Moreover, since $c_0 g_i = c_i$ we also have that $C_i = C_0^{g_i}$. But now $S \leq C_0^{g_i} \leq \bar{S}^{g_i}$. By maximality, $\bar{S} = \bar{S}^{g_i}$ for all $i$. However, any hyperbolic element of $\bar{S}$ has axis equal to $L$, whereas any hyperbolic element of $\bar{S}^{g_i}$ has axis equal to $L g_i$. Hence $L=Lg_i$ and $g_i \in \bar{S}$ for all $i$. But since the $c_i$ are disjoint, $g_0, \ldots, g_p$ are in distinct cosets of $C_0$ in $\bar{S}$, contradicting the fact that $S \leq C_0$ has index $p$ in $\bar{S}$. This contradiction shows that $S$ must in fact be trivial.

\end{proof}

Using this, we can bound the number of orbits of directions using the following result. 

\begin{prop}[Horbez {\cite[Proposition 4.4]{Horbez2017}}] \label{Horbez index}




    Let $G$ be a countable group, $\mathcal{F}$ a free factor system for $G$, and $T$ an $\R$-tree equipped with an isometric $G$ action such that any subgroup in $\mathcal{F}$ fixes a point in $T$. (The subgroups in $\mathcal{F}$ are elliptic in $T$). 

    Then if pointwise arc stabilisers in $T$ are trivial, there is a uniform bound on the number of $\stab(x)$-orbits of directions at branch points $x$ in $T$.
\end{prop}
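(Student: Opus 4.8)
The plan is to reduce to the setting of Horbez's original statement, which concerns the geometry of $\R$-trees equipped with an isometric action of a free product $G = G_1 \ast \cdots \ast G_k \ast F_r$, relative to the free factor system $\mathcal{F}$. The key invariant is the ``index'' of such a tree, a non-negative real number defined (roughly) by summing, over a set of orbit representatives $x$ of branch points, the quantity $(\text{number of } \stab(x)\text{-orbits of directions at } x) - 2$, together with a contribution from the free factor system. Horbez proves that this index is bounded above by a constant depending only on $\mathcal{F}$ (of the form $2r + 2k - 2$ when $\mathcal{F} = (\{G_1,\dots,G_k\}, r)$), \emph{provided} the tree has trivial pointwise arc stabilisers, so that each branch-point stabiliser is itself a ``small'' group acting with no inversions and the local index estimates apply. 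The reduction is therefore almost immediate: the hypothesis of our proposition is exactly that the subgroups of $\mathcal{F}$ are elliptic and that pointwise arc stabilisers are trivial, so Horbez's index theorem applies verbatim.

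First I would recall the definition of the index from \cite{Horbez2017}, namely
$$
\mathrm{ind}(T) \;=\; 2r + \sum_{i=1}^{k}\bigl(\mathrm{ind}_{G_i}(T) \bigr) + \sum_{[x]}\bigl(\mathrm{ind}(x) - 2\bigr),
$$
where the last sum is over orbits of branch points $x$ with non-trivial contribution, $\mathrm{ind}(x)$ counts $\stab(x)$-orbits of directions at $x$ (suitably weighted when $\stab(x)$ is non-trivial), and the $\mathrm{ind}_{G_i}(T)$ terms record how the peripheral factors sit in the tree. Then I would invoke \cite[Proposition 4.4 (or the relevant index inequality)]{Horbez2017}, which asserts $\mathrm{ind}(T) \le 2r + 2k - 2$ whenever arc stabilisers are trivial. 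Since every summand in the displayed formula is non-negative under this hypothesis, each individual term $\mathrm{ind}(x) - 2$ is bounded by $2r + 2k - 2$, and moreover the number of orbits of branch points with $\mathrm{ind}(x) > 2$ is finite. Hence the number of $\stab(x)$-orbits of directions at any branch point $x$ is at most $2r + 2k$, a bound depending only on $\mathcal{F}$ and not on $T$ or on the choice of branch point.

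The only genuine subtlety — and the step I expect to require the most care — is matching conventions: Horbez works with the specific free factor systems arising in his deformation-space setup and with a particular normalisation of the index, and one must check that ``branch point'' and ``direction'' in our Definition~\ref{defn directions and branch points} coincide with his, that the triviality-of-arc-stabilisers hypothesis is being used in the same way (in particular that it forces branch-point stabilisers to act on their links without the pathologies that would break the local count), and that the uniform bound is genuinely uniform over \emph{all} branch points simultaneously rather than merely over a finite set of orbit representatives. This last point follows because directions at points in the same $G$-orbit are permuted equivariantly, so the count of $\stab(x)$-orbits of directions is a $G$-orbit invariant of $x$; combined with the finiteness of the number of orbits carrying non-trivial index, a single global bound results. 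No new ideas are needed beyond a faithful translation of Horbez's result into the notation of this paper.
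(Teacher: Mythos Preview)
Your proposal is correct and follows essentially the same approach as the paper: both simply invoke Horbez's index inequality and observe that the number of $\stab(x)$-orbits of directions at a branch point appears as a summand in the index, so a global bound on the index yields the desired uniform bound. The paper's own treatment is a one-sentence remark to this effect (noting that, with trivial arc stabilisers, every direction has trivial stabiliser and hence contributes to Horbez's count), so your more detailed unpacking of the index formula and the orbit-invariance argument is entirely compatible with---indeed a fuller version of---what the paper does.
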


\begin{rem}
	Horbez \cite[Proposition 4.4]{Horbez2017} gives a bound on the `index' of a tree $T$, a component of which is the sum over $G$-orbits of points $x$ in $T$ of the number of $\stab(x)$-orbits of directions from $x$ in $T$ with trivial stabiliser. The proposition then follows immediately.
\end{rem}

Combining this with Proposition~\ref{LL Prop 3.2} we get,


\begin{thm}
    \label{thm free product has R infinity}
	Let $G$ be a finitely generated group which splits as a non-trivial free product of finitely many indecomposable groups. Then $G$ has property $R_{\infty}$.
\end{thm}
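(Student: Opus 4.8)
The plan is to reduce the theorem to the case where $G$ does not decompose non-trivially as a free product with the infinite dihedral group as a factor, and then apply the train track machinery developed in this section. First I would dispose of the degenerate cases: if $G$ is finite or infinite cyclic, it does not split as a non-trivial free product, so the hypothesis already excludes these; and if $G\cong \Z/2\Z \ast \Z/2\Z$ is infinite dihedral (or virtually so), one checks $R_\infty$ directly (this is a small finite computation on $\out$). Otherwise, I claim $G$ is a non-trivial free product which is not infinite dihedral, so Theorem~\ref{train tracks} applies to any $\varphi\in\aut(G)$, or rather to the class $\Phi\in\out(G)$.

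Next, fix $\Phi\in\out(G)$ and let $T\in D_1(\mathcal{F})$ together with a train track representative $f$ and Perron--Frobenius constant $\lambda\ge 1$ be as produced by Theorem~\ref{train tracks}. I would split into two cases according to $\lambda$. If $\lambda=1$, then by Theorem~\ref{train tracks}(ii) the map $f$ is an isometry and $M_\varphi$ acts isometrically on $T$; since $T\in D_1(\mathcal F)$ it has trivial edge stabilisers and, as $G$ is not virtually cyclic (a non-trivial free product of finitely many indecomposable groups that is not infinite dihedral is not virtually cyclic), Proposition~\ref{irreducible finite edge stabs}(i) forces the action to be irreducible after passing to the minimal subtree (which equals $T$ since $T$ is minimal by Theorem~\ref{train tracks}, being a limit-tree construction / element of $D_1$). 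Then Lemma~\ref{lemma Mphi acting on T} gives $R_\infty(\varphi)$. Alternatively one invokes Proposition~\ref{LL Prop 3.1} with $l$ the length function of the irreducible $G$-action on $T$, using $l\circ\Phi=l$ which follows from $f$ being an equivariant isometry $T\to\varphi T$.

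If $\lambda>1$, I would pass to the stable limit tree $T_\infty$ of Theorem~\ref{train tracks}(iii). By parts (iv) and (v) it is a non-trivial irreducible $\R$-tree with length function $l$ satisfying $l\circ\Phi=\lambda l$, by Lemma~\ref{trivial arc} its arc stabilisers are trivial (here we use that $G$ is finitely generated, and that $T\in D_1(\mathcal F)$ has trivial edge stabilisers so the approximating actions are edge-free hence small), and by Proposition~\ref{Horbez index}, applied with the free factor system $\mathcal F$ whose subgroups are elliptic in $T_\infty$ (they fix points by construction), the number of $\stab(x)$-orbits of directions at branch points is uniformly bounded. These are exactly the hypotheses of Proposition~\ref{LL Prop 3.2}, which then yields $R_\infty(\Phi)$, i.e. $R_\infty(\varphi)$.

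Since $\Phi\in\out(G)$ was arbitrary and, by Definition~\ref{def:rinf} together with the remark after it, having $R_\infty$ is equivalent to having $R_\infty(\Phi)$ for all $\Phi\in\out(G)$, we conclude that $G$ has property $R_\infty$. The main obstacle is purely bookkeeping: making sure that in the $\lambda=1$ branch the action of $M_\varphi$ (equivalently of $G$, via Lemma~\ref{irreducible for G}) really is irreducible and not, say, an action on a line — this is where Proposition~\ref{irreducible finite edge stabs}(i) and the observation that $G$ is not virtually cyclic do the work — and in the $\lambda>1$ branch checking that $T_\infty$ genuinely satisfies every hypothesis of Proposition~\ref{LL Prop 3.2}, all of which have been arranged by the preceding lemmas. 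No genuinely new difficulty should arise beyond assembling these pieces.
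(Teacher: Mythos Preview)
Your proposal is correct and follows essentially the same route as the paper: handle $D_\infty$ separately, then for any $\Phi$ invoke Theorem~\ref{train tracks} and split on $\lambda$, using Proposition~\ref{LL Prop 3.1} (equivalently Lemma~\ref{lemma Mphi acting on T}) when $\lambda=1$ and Proposition~\ref{LL Prop 3.2} together with Lemma~\ref{trivial arc} and Proposition~\ref{Horbez index} when $\lambda>1$. You are slightly more explicit than the paper in justifying irreducibility of $T\in D_1(\mathcal F)$ via Proposition~\ref{irreducible finite edge stabs}(i), which is exactly the intended mechanism.
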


\begin{rem}
    The only times we need the hypothesis of being finitely generated are when we invoke the \cite[Theorem 5.3]{Culler1987} in Lemma~\ref{trivial arc} and \cite[Proposition 4.4]{Horbez2017} in Proposition~\ref{Horbez index}. However, both results are true in general without this hypothesis. 
\end{rem}
\begin{proof}
	The case where $G=D_{\infty}$ is known and can be dealt with separately (see Theorem~\ref{D infty}). 
	
	In all other cases, every tree in $D_1(\mathcal{F})$, for any proper free factor system $\mathcal{F}$ is irreducible. 
	
	Consider some $\varphi \in \aut(G)$. We now use Theorem~\ref{train tracks}.  If $\lambda =1$, we get an isometric action of $M_{\varphi}$ on some $T \in D_1(\mathcal{F})$ and hence we are done by Proposition~\ref{LL Prop 3.1} (or Lemma~\ref{lemma Mphi acting on T}). If $\lambda > 1$ we form the stable limit tree, $T_{\infty}$. 
	
	All that is left is to verify the hypotheses of Proposition~\ref{LL Prop 3.2} in the case where $\lambda > 1$. This is done in Theorem~\ref{train tracks}, Lemma~\ref{trivial arc} and Proposition~\ref{Horbez index}. 
\end{proof}

\undecidable

\begin{proof}
    It is well known \cite[Chapter IV, Theorem 4.1]{Lyndon1977} that the property of being trivial is undecidable amongst finitely presented groups. (It is clear that triviality is a Markov property). 

Suppose we are given a finite presentation of a group, $G = \langle X \ : \ R  \rangle $. 

Then we can form the finitely presented group, $\Gamma = \Z * G = \langle X, t \ : \ R \rangle$, where $t$ is a letter which does not appear in $X$. 

The group $\Gamma$ is isomorphic to $\Z$ precisely when $G$ is trivial. In which case $\Gamma$ would not have $R_{\infty}$. ($\Z$ has exactly two twisted conjugacy classes with respect to the automorphism  sending $n \mapsto -n$). 

On the other hand, if $G$ were non-trivial, then $\Gamma$ would be a finitely generated group which splits as a non-trivial free product of finitely many indecomposable groups and so has $R_{\infty}$ by Theorem~\ref{thm free product has R infinity}. 

Therefore, if we could decide $R_{\infty}$ for $\Gamma$ we could also decide triviality for $G$. 
    
\end{proof}


\section{ \texorpdfstring{$R_{\infty}$} \ \  for Groups with Infinitely Many Ends}


\subsection{Accessible groups}

In this section we extend the result to groups with infinitely many ends. We recall the definition of an end. (See also \cite[IV, Definition 6.4]{Dicks2010}, for an alternate but equivalent formulation.)

\begin{defn}
    Let $G$ be a finitely generated group with finite generating set $S$. Let $\Gamma$ be the Cayley graph of $G$ with respect to $S$. Then the number of ends of $G$ is the supremum of the number of infinite components of the graphs, $\Gamma \setminus F$, where $F$ ranges over all finite subgraphs of $\Gamma$. 
\end{defn}

It turns out that the number of ends does not depend on $S$ and is an invariant of the group. In fact, more can be said via the classical Stallings Theorem on ends, which can be found in \cite[IV, Theorem 6.10 and Theorem 6.12]{Dicks2010}

\begin{thm}
\label{stallings}
    Let $G$ be a finitely generated group.
    \begin{itemize}
        \item The number of ends of $G$ is $0,1,2$ or $\infty$. 
        \item If $G$ has more than one end, then $G$ acts non-trivially on a simplicial tree with finite edge stabilisers. 
        \item The number of ends of $G$ is 2 if and only if $G$ has a infinite cyclic subgroup of finite index. 
    \end{itemize}
\end{thm}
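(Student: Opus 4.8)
The plan is to recognise this as the classical Freudenthal--Hopf--Stallings package; for the paper itself we simply cite \cite[IV, Theorems 6.10 and 6.12]{Dicks2010}, but the argument proceeds as follows. Fix a finite generating set $S$ and Cayley graph $\Gamma$, and work with \emph{almost invariant} subsets $A \subseteq G$, meaning that the symmetric difference $A \triangle Ag$ is finite for every $g \in G$. The basic dictionary is that $G$ has at least two ends if and only if there is an almost invariant $A$ with both $A$ and $G \setminus A$ infinite, and that $G$ has exactly one end if and only if every almost invariant set is finite or cofinite; in particular the number of ends does not depend on $S$ and is a group invariant.

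For the trichotomy (the number of ends $e(G)$ lies in $\{0,1,2,\infty\}$), suppose $G$ has at least two ends, witnessed by an almost invariant $A$ with $A$ and $A^{c}$ infinite. If $A$ is ``nested'' with all of its $G$-translates --- for every $g$, one of $A \subseteq Ag$, $Ag \subseteq A$, $A \cap Ag = \emptyset$, $A \cup Ag = G$ holds up to a finite set --- then the combinatorics of this nested family is dual to an action of $G$ on a simplicial tree, and one reads off that $e(G) = 2$. If $A$ is not nested with some translate, then the four Boolean combinations $A \cap Ag$, $A \cap (Ag)^{c}$, $A^{c}\cap Ag$, $A^{c} \cap (Ag)^{c}$ are all infinite, and iterating this observation produces, for every $k$, a family of $2^{k}$ pairwise disjoint infinite almost invariant pieces, so $e(G) = \infty$. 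This is the elementary part of the statement.

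The substantive step is Stallings' structure theorem: from an almost invariant $A$ with $A$, $A^{c}$ infinite, produce a non-trivial action of $G$ on a simplicial tree with finite edge stabilisers. Following Dunwoody, one realises the ``cut'' defined by $A$ by a finite \emph{track} in a Cayley $2$-complex of $G$ --- a finite pattern transverse to the $1$-skeleton whose complement has two infinite components --- and then, crucially, replaces the $G$-translates of this track by a $G$-finite family of tracks that are pairwise nested or disjoint. The simplicial tree dual to such a family (vertices the complementary regions, edges the tracks) is the desired $G$-tree: its edge stabilisers are setwise stabilisers of finite tracks, hence finite, and the action is non-trivial since $A$ and $A^{c}$ are both infinite. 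Producing the nested family, and --- in the finitely presented case --- controlling the iteration so that it terminates, is exactly the delicate point; this is where essentially all the difficulty lies, and it is the content of Stallings' theorem together with Dunwoody's accessibility results.

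Finally, having exactly two ends is equivalent to being virtually infinite cyclic. If $G$ has two ends, it acts on the two-point space of ends, so the kernel $G_{0}$ of this action has index at most two and fixes both ends; feeding the two-ended $G_{0}$ into the structure-tree step yields a cocompact action on a tree with finite edge stabilisers which, having only two ends, is forced to be a line, on which $G_{0}$ then acts by translations, giving a homomorphism $G_{0} \to \Z$ with finite kernel and infinite image. Hence $G_{0}$, and therefore $G$, is virtually $\Z$. Conversely a virtually infinite cyclic group is quasi-isometric to $\R$, which has two ends, so $e(G) = 2$. The only real obstacle in the whole theorem is the structure-tree construction; the trichotomy and the two-ended characterisation are routine once it is in hand, which is why we content ourselves with the reference to \cite{Dicks2010}.
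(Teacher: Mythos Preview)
The paper gives no proof of this theorem; it is stated as classical and referenced to \cite[IV, Theorems 6.10 and 6.12]{Dicks2010}, exactly as you do in your opening sentence. So as far as matching the paper goes, you are done.

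Your supplementary sketch, however, has a genuine slip in the trichotomy step. You phrase the dichotomy in terms of whether $A$ is nested with its right-translates $Ag$, but almost invariance \emph{means} that $A \triangle Ag$ is finite for every $g$, so $A \subseteq Ag$ always holds up to a finite set and your ``nested'' branch is vacuously satisfied by every almost invariant set --- in particular by half-spaces in $F_2$, where $e(G)=\infty$. The relevant nesting in Dunwoody's approach is between $A$ and its \emph{left}-translates $gA$; these are again almost invariant but need not be almost equal to $A$, and it is this family that one tries to nest in order to build the structure tree. The standard Freudenthal--Hopf argument for $e(G)\in\{0,1,2,\infty\}$ is in any case more elementary and does not use nesting: if a finite $F\subset\Gamma$ leaves at least three infinite complementary components, translate $F$ deep into one of them to split that component further, and iterate. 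The remainder of your sketch (Stallings via Dunwoody tracks, and the two-ended characterisation) is fine in outline.
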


Thus a group with infinitely many ends can be `split' along finite subgroups. One could then ask whether the vertex stabilisers of such an action split further and whether this process terminates. The groups for which it does terminate are called \textit{accessible}.

\begin{defn}[See {\cite[p.189]{Scott}}  and {\cite[IV, Definition 7.1]{Dicks2010}}] 
\label{accessible}
Given a group $G$ a simplicial $G$-tree $T$ is called terminal if the vertex stabilisers of $T$ have at most one end and all edge stabilisers are finite. 

A finitely generated group $G$ is called accessible if it admits a terminal $G$-tree. 
\end{defn}
\begin{obs}
\label{finite generation}
    We insist that an accessible group is finitely generated as in \cite{Scott} but contrary to \cite{Dicks2010}. 

Note that the vertex groups of an accessible group $G$ acting minimally on a terminal $G$-tree $T$ are finitely generated, by \cite[III, Lemma 8.1]{Dicks2010}.

\end{obs}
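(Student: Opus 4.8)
The plan is to recognise this as the standard structural fact that a finitely generated group acting cocompactly on a tree with \emph{finite} edge stabilisers has finitely generated vertex groups; [Dicks-Dunwoody, III, Lemma 8.1] is precisely this statement, and I outline the argument one would give.

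First I would reduce to a cocompact action. Since $G$ is finitely generated and acts minimally on $T$, the action is cocompact: fixing a finite generating set $\{g_1,\dots,g_n\}$ and a base vertex $x_0$, the union of the $G$-translates of the finite subtree $\bigcup_i [x_0, x_0 g_i]$ is a non-empty $G$-invariant subtree with finite quotient graph, hence equals $T$ by minimality. So $T/G$ is a finite graph and, by Bass--Serre theory (after subdividing to remove inversions), $G$ is the fundamental group of a finite graph of groups whose edge groups are finite, because $T$ is terminal, and whose vertex groups are the stabilisers $G_v$ we want to show are finitely generated. Note that collapsing a $G$-invariant subforest preserves minimality, so the splittings produced below are non-degenerate.

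The remaining point---that finite generation of $G$ forces each $G_v$ to be finitely generated---I would prove by induction on the number of edge orbits of $T$. With no edges it is immediate. Otherwise, splitting along one edge orbit exhibits $G$ as $A *_C B$ or $A *_C$, where $C$ is finite and $A$ (resp.\ $A$, $B$) is the fundamental group of a graph of groups with strictly fewer edges, finite edge groups, and vertex groups among the original $G_v$ up to conjugacy. So it suffices to treat the one-edge case, and there one uses normal forms: each of finitely many generators of $G$ is an alternating word involving only finitely many syllables from each vertex group, and---because $C$ is finite---adjoining $C$ (and its images under the attaching maps) to those finitely many syllables produces finitely generated subgroups $A' \le A$, $B' \le B$; the subgroup $A' *_C B'$ (resp.\ $A' *_C$) then contains all the generators, hence equals $G$, and uniqueness of normal forms (equivalently, comparing the two actions on the Bass--Serre tree) forces $A' = A$ and $B' = B$. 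Feeding $A$ and $B$, now known to be finitely generated, back into the induction gives the claim.

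The main obstacle is this last identification $A' = A$, and, more substantively, the fact that the statement genuinely requires the terminality hypothesis rather than following formally from cocompactness: the mapping torus $\bigl(\bigoplus_{\Z}\Z\bigr) \rtimes_{\varphi} \Z$ of the shift automorphism $\varphi$ is a $2$-generated HNN extension whose single vertex group $\bigoplus_{\Z}\Z$ is not finitely generated. Thus finiteness of the edge stabilisers must be used in an essential way, and it is exactly the syllable-absorption step above that uses it; this is why the result is quoted from [Dicks-Dunwoody] rather than deduced purely from the cocompactness of the action.
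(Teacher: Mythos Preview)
The paper does not give a proof of this observation at all: it is recorded as a remark, and the claim that vertex groups are finitely generated is supported solely by the citation to \cite[III, Lemma~8.1]{Dicks2010}. Your proposal therefore goes beyond what the paper does, by sketching the argument that lies behind the cited lemma. That sketch is correct: cocompactness follows from finite generation of $G$ together with minimality, and the normal-form argument in the one-edge case (take the finitely many syllables appearing in a finite generating set, adjoin $C$, and use uniqueness of normal forms to force $A'=A$, $B'=B$) is the standard one and is essentially what Dicks--Dunwoody do.

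One small sharpening: the hypothesis actually used in your syllable-absorption step is that $C$ be \emph{finitely generated}, not finite, since you only need $\langle S_A, C\rangle$ to be finitely generated. Your counterexample $\bigl(\bigoplus_{\Z}\Z\bigr)\rtimes_{\varphi}\Z$ has edge group $\bigoplus_{\Z}\Z$, which is not finitely generated, so it witnesses the necessity of that weaker hypothesis rather than of finiteness per~se. In the terminal-tree setting this distinction is of course moot, as finite edge stabilisers are trivially finitely generated.
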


\begin{defn}[{\cite[VI, Definition 4.1]{Dicks2010}}]
	A group $G$ is called {almost finitely presented} if it is of type $FP_2$ over $\Z_2$. In particular, every finitely presented group is almost finitely presented. 
\end{defn}


\begin{rem}
    Recall that a reduced $G$-tree (Definition~\ref{definition of reduced}) is also minimal by  Lemma~\ref{reduced implies minimal}. 
\end{rem}

\begin{thm}[\cite{Dicks2010}, VI, Theorem 6.3]
	Let $G$ be an almost finitely presented group. Then $G$ has a reduced terminal $G$-tree $T$. Moreover, the action on $T$ is co-compact. 
\end{thm}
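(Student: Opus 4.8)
The plan is to run Dunwoody's accessibility argument and then extract the reduced terminal tree using the reduction machinery already in place above. First observe that it is enough to produce \emph{some} terminal $G$-tree $T$: its reduction $r(T)$, obtained by collapsing collapsible edges (Observation~\ref{reduced tree facts}), lies in the same deformation space as $T$, so its vertex groups are the maximal elliptic subgroups of that space and therefore agree up to conjugacy with those of $T$ — in particular they still have at most one end — while its edge stabilisers stabilise edges of $T$ and so remain finite, and $r(T)$ is minimal by Lemma~\ref{reduced implies minimal}. Thus $r(T)$ is a reduced terminal $G$-tree in the sense of Definition~\ref{accessible}. Co-compactness then follows from the standard fact that a reduced $G$-tree for a finitely generated group has only finitely many orbits of edges (in a reduced tree an edge whose endpoints lie in distinct orbits has edge group properly contained in \emph{both} endpoint stabilisers, and finite generation caps the length of such chains of proper inclusions realised along reduced paths).

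The substantive content is therefore the existence of a terminal tree, i.e.\ Dunwoody's theorem that almost finitely presented groups are accessible. One realises the hypothesis geometrically: since $G$ is of type $FP_2$ over $\Z_2$, there is a $2$-dimensional $G$-CW-complex $X$ on which $G$ acts freely and cocompactly with $H_1(X;\Z_2)=0$. The bridge to trees is the notion of a \emph{track} in $X$ (a $G$-invariant family of arcs meeting each $2$-cell in disjoint arcs and each edge in finitely many points, avoiding the vertices): a non-empty $G$-finite pattern of tracks with finite stabilisers is dual to an action of $G$ on a simplicial tree with finite edge stabilisers; a genuine splitting over a finite subgroup corresponds to such a pattern none of whose tracks is separating-and-inessential; and refining the splitting corresponds to enlarging the pattern by tracks disjoint from, and not parallel to, those already present.

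The crux is Dunwoody's complexity bound: because $X$ has finitely many $G$-orbits of cells and $H_1(X;\Z_2)=0$, there is an integer $N=N(X)$ such that every pattern in $X$ contains at most $N$ pairwise disjoint, pairwise non-parallel tracks (counted up to the $G$-action). Heuristically, tracks behave like $\Z_2$-cycles in a dual cellulation, disjoint non-parallel tracks are independent in the relevant homological sense, and vanishing of $H_1(\,\cdot\,;\Z_2)$ together with finiteness of the $2$-skeleton forbids too many of them; making this precise is where the almost-finite-presentation hypothesis is genuinely used, and it is the hard step of the whole argument.

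Granting the bound, accessibility follows: a strictly ascending chain of refinements $T_0 < T_1 < \cdots$ of $G$-trees over finite edge groups would yield strictly growing families of disjoint, non-parallel tracks in $X$, contradicting the bound once the index exceeds $N$. Hence the refinement process stabilises at a $G$-tree $T$ no vertex group of which splits nontrivially over a finite subgroup; by Stallings' theorem (Theorem~\ref{stallings}) each such (finitely generated, by Observation~\ref{finite generation}) vertex group has at most one end, and all edge groups are finite, so $T$ is terminal. Passing to $r(T)$ and invoking the co-compactness remark of the first paragraph finishes the proof. \textbf{Main obstacle:} Dunwoody's track-counting bound — reducing splittings to patterns of tracks is essentially formal, but bounding the number of independent tracks via $\Z_2$-homology is the genuine engine of the argument.
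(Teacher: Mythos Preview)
Your proposal is correct in outline but takes a very different route from the paper. The paper's proof is essentially a string of citations: it invokes \cite[VI, Theorem 6.3]{Dicks2010} as a black box to obtain a terminal $G$-tree, then passes to a reduced tree via Observation~\ref{reduced tree facts}, and deduces co-compactness from the chain reduced $\Rightarrow$ minimal (Lemma~\ref{reduced implies minimal}) $\Rightarrow$ co-compact for finitely generated $G$ (\cite[I, Proposition 4.13]{Dicks2010}). You instead unpack Dunwoody's accessibility argument itself --- realising $FP_2$ over $\Z_2$ via a cocompact free $G$-action on a $2$-complex with vanishing $\Z_2$-first homology, dualising track patterns to splittings, and using the track-counting bound to terminate the refinement process. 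That sketch is faithful to the Dicks--Dunwoody proof, and you correctly flag the track bound as the substantive step. What the paper's approach buys is brevity appropriate to a result it is merely quoting; what yours buys is an explanation of \emph{why} the almost-finite-presentation hypothesis enters.

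One genuine wrinkle: your parenthetical justification for co-compactness is not right. You argue that in a reduced tree, edges with endpoints in distinct orbits have edge group properly contained in both endpoint stabilisers, and that ``finite generation caps the length of such chains of proper inclusions''. The first clause is true (it is exactly the contrapositive of Definition~\ref{definition of reduced}), but the second does not follow: finite generation of $G$ does not by itself bound ascending chains of vertex stabilisers along a path, and there is no reason such a chain argument controls the number of edge \emph{orbits}. The correct (and easier) route is the one the paper takes: reduced implies minimal (Lemma~\ref{reduced implies minimal}), and any minimal simplicial action of a finitely generated group is co-compact. You already cite Lemma~\ref{reduced implies minimal} in passing, so the fix is just to replace the parenthetical with this.
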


\begin{proof}
    The existence of a terminal $G$-tree follows from \cite[VI, Theorem 6.3]{Dicks2010}. The fact that we can take the tree to be reduced follows from \cite{Guirardel2007}. The action can then be taken to be minimal by Lemma~\ref{Serre lemma} and Proposition~\ref{minimal subtree} (as $G$ is finitely generated). Finally, the action is co-compact, as is any action of a finitely generated group acting minimally on a simplicial tree. For instance, see \cite[I, Proposition 4.13]{Dicks2010}. 
 \end{proof}

We will also need the following, which is just a restatement of the `blowing up' construction given in \cite[IV, Section 7]{Dicks2010}.

\begin{prop}
\label{blow up}
    Let $H$ be an accessible group acting on a simplicial tree $S$ with finite edge stabilisers. Then there exists a terminal $H$-tree, $X$ so that every edge stabiliser of $S$ is equal to some edge stabiliser of $X$. 

    Moreover, if the action on $S$ is non-trivial then so too is the action on $X$. 
\end{prop}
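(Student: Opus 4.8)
\emph{Proof proposal.} The plan is to carry out explicitly the ``blowing up'' construction of Dicks--Dunwoody, \cite[IV, Section 7]{Dicks2010}. We may assume that the action of $H$ on $S$ is cocompact --- this is the setting of that reference, and when the action is non-trivial one may first replace $S$ by its $H$-minimal subtree, which is cocompact since $H$ is finitely generated (and this replacement does not lose edge stabilisers, as a finite edge group of $S$ is orientation preserving and a vertex of the subdivided minimal subtree inherits its stabiliser). The first step would be to observe that each vertex stabiliser $H_v$ of $S$ is again an accessible group: it is finitely generated by \cite[III, Lemma 8.1]{Dicks2010}, and, being a vertex group of a splitting of the accessible group $H$ over finite edge groups, it inherits accessibility --- roughly, a reduced $H_v$-tree with finite edge stabilisers and many edge orbits blows up (making the finite edge groups of $S$ elliptic) to such an $H$-tree, so a uniform bound on the latter forces one on the former; this is exactly the content of the Dicks--Dunwoody accessibility theory. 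Hence for each $H$-orbit of vertices of $S$, with representative $v$, I would fix a terminal $H_v$-tree $X_v$.

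Next I would perform the blow-up. After subdividing $S$ to remove inversions, for each $H$-orbit of edges of $S$ incident to $v$ I would pick a representative $e$; since $H_e$ is finite it fixes a vertex $y_e$ of $X_v$. I would then replace the $H$-orbit of $v$ by the $H$-orbit of a copy of $X_v$, re-attaching each translate of $e$ to the corresponding translate of $y_e$, performing this equivariantly over all vertex orbits. This produces an $H$-tree $X$ together with an $H$-equivariant collapse map $c \colon X \to S$ which crushes each copy of each $X_v$ back to the appropriate vertex of $S$.

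It would then remain to check the conclusions. Any vertex of $X$ lies in a unique copy of some $X_v$, and since $H$ permutes these copies and distinct copies are disjoint, its $H$-stabiliser equals its stabiliser inside that copy; this is a conjugate of a vertex stabiliser of the terminal tree $X_v$, hence has at most one end. An edge of $X$ is either a ``new'' edge lying inside a copy of some $X_v$, whose stabiliser is a conjugate of an edge stabiliser of $X_v$ and so is finite, or else a translate of an original edge $e$ of $S$; in the latter case $H_e$ fixes $y_e$ by construction and so stabilises this edge of $X$, while anything stabilising it must fix $e$ in $S$, so the stabiliser is exactly $H_e$. Thus $X$ is a terminal $H$-tree and every edge stabiliser of $S$ appears as an edge stabiliser of $X$. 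Finally, if the action on $S$ is non-trivial I would argue that $X$ has no global fixed point: a point $x \in X$ fixed by all of $H$ would map under $c$ to a point of $S$ fixed by all of $H$ (equivalently, by Serre's Lemma~\ref{Serre lemma} there is an $h \in H$ hyperbolic for the $S$-action, and $h$ cannot be elliptic for the $X$-action, as its fixed point would descend through $c$ to a fixed point in $S$), which is impossible; hence the action on $X$ is non-trivial as well.

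The main obstacle is the very first step, namely establishing that the vertex groups $H_v$ of $S$ are accessible so that the terminal trees $X_v$ one glues in actually exist; this is precisely where the hypothesis that $H$ is accessible is used, and it is exactly the input supplied by Dicks--Dunwoody. Granted that, everything else is the routine equivariant bookkeeping of the blow-up sketched above.
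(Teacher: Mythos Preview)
Your proposal is correct and takes essentially the same approach as the paper: both perform the Dicks--Dunwoody blow-up, replacing each vertex group $H_v$ of $S$ by a terminal $H_v$-tree and re-attaching the finite edge groups at their fixed points; the paper's proof is a terse sketch of exactly this construction, while you supply the bookkeeping (the collapse map $c$, the two types of edges, the check of non-triviality).

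One small quibble: your parenthetical claim that passing to the $H$-minimal subtree ``does not lose edge stabilisers'' is not convincingly justified---an edge of $S$ lying outside the minimal subtree has stabiliser contained in some vertex group $H_v$, but nothing forces that finite subgroup to reappear as an \emph{edge} stabiliser after blowing up $H_v$. The paper's sketch makes the same implicit cocompactness assumption (it invokes Observation~\ref{finite generation}, which is stated for minimal actions), and in the only application (Proposition~\ref{stabilisers}(iv)) the tree $S$ is already taken to be the minimal invariant subtree, so this does not cause trouble downstream.
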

\begin{proof}
    The idea here is to `blow up' $S$ to produce $X$. Just as in Observation~\ref{finite generation}, all the vertex stabilisers in $S$ are finitely generated and hence accessible, since $H$ is. If we write $H$ as a graph of groups using $S$, we can then replace each vertex group, $H_v$, with a graph of groups whose edge stabilisers are finite and whose vertex stabilisers have at most one end. In particular, since all edge stabilisers in $S$ are finite, each edge group must fix a vertex in any incident $H_v$ terminal tree and so we get a well defined graph of groups giving us the result. 
\end{proof}

\begin{prop}
\label{order of edge stabs}
    Suppose that a finitely generated group $H$ acts minimally on a simplicial tree $T$ with finite edge stabilisers and consider the reduction, $r(T)$ obtained from $T$ by collapsing all collapsible edges. Then, 

    \begin{enumerate}[(i)]
        \item If $T$ is not a point, then $r(T)$ is also not a point. 
        \item For any edge $e$ of $r(T)$ there exists an edge $e'$ of $T$ such that $|\stab_H(e')| =|\stab_H(e)|$.


    \end{enumerate}
\begin{proof}
    Since $T$ and $r(T)$ are in the same deformation space by Observation~\ref{reduced tree facts}, if $r(T)$ is a point then the whole of $H$ is elliptic in $r(T)$ and hence also in $T$, proving (i). 

For (ii), we can simply regard the edge set of $r(T)$ as a subset of the edge set for $T$, from which the result immediately follows.

 \end{proof}
    
\end{prop}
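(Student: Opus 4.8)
The plan is to deduce both parts directly from the basic properties of the reduction operation recorded in Observation~\ref{reduced tree facts}, together with minimality of the action.

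For part (i) I would argue by contraposition. Suppose $r(T)$ is a single point. By Observation~\ref{reduced tree facts}(iii), $T$ and $r(T)$ lie in the same deformation space, so they have the same elliptic subgroups; since $H$ fixes the unique vertex of $r(T)$ it is elliptic in $r(T)$, hence elliptic in $T$. Thus $H$ fixes a point $x$ of $T$, and $\{x\}$ is an $H$-invariant subtree; minimality of $T$ then forces $T=\{x\}$. (Equivalently, one can just quote the statement in Observation~\ref{reduced tree facts}(iii) that the action on $T$ is non-trivial if and only if $r(T)$ is not a point, combined with the observation that a minimal action on a tree with more than one vertex is automatically non-trivial.)

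For part (ii), the key point is that $r(T)$ is obtained from $T$ by equivariantly collapsing the ($H$-invariant) subforest of collapsible edges, so the collapse map $T \to r(T)$ restricts to an $H$-equivariant bijection between the surviving (non-collapsed) edges of $T$ and the edges of $r(T)$. Under this identification, an element $g \in H$ fixes an oriented edge $e$ of $r(T)$ if and only if it fixes the corresponding surviving edge $e'$ of $T$, so $\stab_H(e') = \stab_H(e)$ on the nose; in particular they have the same (finite) order. This sharpens the inclusion implicit in Observation~\ref{reduced tree facts}(ii) — that any edge stabiliser of $r(T)$ stabilises an edge of $T$ — to an equality, simply by tracking the bijection rather than just the containment.

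I do not expect a serious obstacle: both statements are formal consequences of the construction of the reduction. The only mild subtlety is in part (ii), where collapsing edges may merge the endpoints of surviving edges; but since we only ever compare stabilisers of edges (never of vertices), and the collapse map is equivariant and injective on surviving edges, this causes no difficulty.
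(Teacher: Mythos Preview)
Your proposal is correct and follows essentially the same approach as the paper: for (i) you use that $T$ and $r(T)$ share elliptic subgroups (same deformation space) together with minimality, and for (ii) you identify the edges of $r(T)$ with the surviving edges of $T$ to match stabilisers. The only difference is that you spell out the use of minimality in (i) and the equality of stabilisers in (ii) more explicitly than the paper does.
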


\begin{defn}
	A subgroup $H$ of a group $G$ is called {characteristic} if for every $\varphi\in\aut(G)$ we have that $(H)\varphi\le H$. 
\end{defn}

\begin{obs}\label{obs characteristic properties}
	It is an easy exercise to see that if $H$ is a characteristic subgroup of a group $G$, then the following all hold:
	\begin{enumerate}
		\item For any $\varphi\in\aut(G)$ we have $(H)\varphi=H$.
		\item $H$ is a normal subgroup of $G$.
		\item Every automorphism of $G$ induces an automorphism of $G/H$.
		\item If $G$ has a unique subgroup $K$ of a given order, then $K$ is characteristic.
	\end{enumerate}
\end{obs}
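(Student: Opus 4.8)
The plan is to treat the four assertions in turn, each following in a line or two from the defining inclusion $(H)\varphi \le H$ for every $\varphi \in \aut(G)$. For (1), the trick is to apply the hypothesis also to $\varphi^{-1}$: from $(H)\varphi^{-1}\le H$ we obtain, on applying $\varphi$, that $H = \big((H)\varphi^{-1}\big)\varphi \le (H)\varphi$, and together with $(H)\varphi \le H$ this gives $(H)\varphi = H$. This equality is really the only substantive content; parts (2)--(4) then drop out of it.

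For (2), one observes that conjugation by any $g \in G$ is an automorphism of $G$, so by (1) it maps $H$ onto $H$; hence $H$ is a normal subgroup of $G$. For (3), since $H$ is now known to be normal, $G/H$ is a group, and for $\varphi \in \aut(G)$ the assignment $gH \mapsto (g\varphi)H$ is well defined precisely because $(H)\varphi = H$ by (1). It is a homomorphism because $\varphi$ is, and the map induced in the same way by $\varphi^{-1}$ is a two-sided inverse, so $\varphi$ descends to an automorphism $\overline{\varphi}$ of $G/H$.

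For (4), suppose $K \le G$ is the unique subgroup of $G$ of some fixed finite order. Any $\varphi \in \aut(G)$ carries $K$ to a subgroup $(K)\varphi$ of $G$ of the same order, since $\varphi$ is an order-preserving bijection; so $(K)\varphi = K$ by uniqueness, and in particular $(K)\varphi \le K$, whence $K$ is characteristic. There is no genuine obstacle here — the statement is exactly the ``easy exercise'' it is billed as — the only point requiring any attention is remembering to feed $\varphi^{-1}$ into the hypothesis in step (1), which is what upgrades the defining inclusion to an equality and lets (2) and (3) go through cleanly.
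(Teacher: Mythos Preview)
Your proof is correct and complete; the paper itself gives no proof of this observation, explicitly leaving it as an ``easy exercise,'' so there is nothing to compare against. Your argument is exactly the standard one, with the only non-automatic step --- feeding $\varphi^{-1}$ into the hypothesis to upgrade the inclusion to an equality in (1) --- handled cleanly.
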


\begin{lemma}\label{lemma G/N R-infiity implies G R-infinity}
	Suppose a group $G$ has a characteristic subgroup $N$.
	If $G/N$ has $R_{\infty}$, then so too does $G$.
\end{lemma}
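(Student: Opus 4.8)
The plan is to reduce twisted conjugacy in $G$ to twisted conjugacy in $G/N$ using the projection $\pi\colon G\to G/N$ and the induced automorphism. Since $N$ is characteristic, by Observation~\ref{obs characteristic properties}(3) every $\varphi\in\aut(G)$ induces an automorphism $\overline{\varphi}$ of $G/N$, characterised by $\pi(g\varphi)=\pi(g)\overline{\varphi}$ for all $g\in G$. So fix an arbitrary $\varphi\in\aut(G)$; it suffices to show $G$ has $R_\infty(\varphi)$, and by assumption $G/N$ has $R_\infty(\overline{\varphi})$.

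First I would observe that $\pi$ descends to a well-defined surjection on twisted conjugacy classes: if $x\sim_\varphi y$ in $G$, say $y=(w\varphi)xw^{-1}$, then applying $\pi$ gives $\pi(y)=(\pi(w)\overline{\varphi})\pi(x)\pi(w)^{-1}$, so $\pi(x)\sim_{\overline{\varphi}}\pi(y)$ in $G/N$. Hence there is a well-defined map from the set of $\sim_\varphi$-classes in $G$ to the set of $\sim_{\overline{\varphi}}$-classes in $G/N$, sending $[x]_\varphi\mapsto[\pi(x)]_{\overline{\varphi}}$. This map is surjective because $\pi$ itself is surjective. Therefore the number of $\sim_\varphi$-classes in $G$ is at least the number of $\sim_{\overline{\varphi}}$-classes in $G/N$, which is infinite by hypothesis. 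Thus $G$ has $R_\infty(\varphi)$, and since $\varphi$ was arbitrary, $G$ has $R_\infty$.

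There is essentially no obstacle here: the only point requiring a moment's care is confirming that $\overline{\varphi}$ is genuinely well-defined as an automorphism of $G/N$, which is exactly Observation~\ref{obs characteristic properties}(3) (and uses that $N$ is characteristic, hence normal, so $G/N$ makes sense and $(N)\varphi=N$ guarantees $\varphi$ respects the congruence classes of $N$). Everything else is the routine verification that a surjective homomorphism pushes twisted conjugacy forward compatibly, exactly mirroring the unwisted case. I would phrase the proof in terms of this surjection of class sets rather than constructing explicit representatives.
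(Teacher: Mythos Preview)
Your proof is correct and follows essentially the same approach as the paper: both argue that the projection $\pi$ carries $\sim_\varphi$-classes in $G$ onto $\sim_{\overline{\varphi}}$-classes in $G/N$, so infinitely many classes downstairs forces infinitely many upstairs. The paper states the slightly stronger biconditional $xN\sim_{\varphi_*}yN \Leftrightarrow \exists\, n\in N$ with $x\sim_\varphi ny$, but only uses the direction you prove.
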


\begin{proof}
	Let $\varphi\in\aut(G)$. Since $N$ is characteristic, $\varphi$ induces an automorphism $\varphi_{\ast}$ on $G/N$ given by $(g\cdot N)\varphi_{\ast}=(g)\varphi\cdot N$.
	Given $xN, yN\in G/N$, we have that
	\begin{align*}
		&	xN\sim_{\varphi_{\ast}} yN	\\
		\Longleftrightarrow	&	\exists wN\in G/N \text{ such that } (wN)\varphi_{\ast}xN(wN)^{-1}=yN		\\
		\Longleftrightarrow	&	\exists w\in G \text{ such that } (w\varphi)xw^{-1}N=yN		\\
		\Longleftrightarrow	&	\exists w\in G \text{ and } \exists n\in N \text{ such that } (w\varphi)xw^{-1}=ny	\\
		\Longleftrightarrow	&	\exists n\in N \text{ such that } x\sim_{\varphi} ny	.
	\end{align*}
	In particular, if $xN$ and $yN$ belong to different $\sim_{\varphi_{\ast}}$ classes in $G/N$, then $x$ and $y$ belong to different $\sim_{\varphi}$ classes in $G$ (since $1\in N$).
	So if $G/N$ has $R_{\infty}(\varphi_{\ast})$, then we must have that $G$ has $R_{\infty}(\varphi)$.
	This holds for all $\varphi\in\aut(G)$, hence if $G/N$ has $R_{\infty}$ then so too must $G$.
\end{proof}

\begin{lemma}\label{lemma kernel is characteristic}
	Let $G$ be a group acting on a terminal minimal tree $T$, and let $N=\{ g\in G | xg=x \ \forall x\in T \}$ be the kernel of this action. Then $N$ is the maximal (normal and finite) subgroup of $G$. 
	In particular, $N$ is a finite characteristic subgroup of $G$.
\end{lemma}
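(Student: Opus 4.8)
Let $G$ be a group acting on a terminal minimal tree $T$, with kernel $N = \{g \in G : xg = x \ \forall x \in T\}$. Claim: $N$ is the maximal normal finite subgroup of $G$; in particular it is a finite characteristic subgroup.

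The plan is as follows. First I would observe that $N$ is visibly normal, since it is the kernel of the action homomorphism $G \to \operatorname{Isom}(T)$. To see that $N$ is finite, I would pick any edge $e$ of $T$ (the tree has at least one edge, since if it were a single vertex the action would not be interesting — but I should be careful: if $T$ is a single vertex, then $N = G$ and the vertex stabiliser $G$ has at most one end by the terminal hypothesis, which does not immediately give finiteness, so I want to rule this out or handle it, presumably minimality plus the context rules out a trivial tree, or else the statement should be read with the understanding that $T$ is non-trivial). Assuming $T$ has an edge $e$, then $N \le \operatorname{stab}_G(e)$, and edge stabilisers of a terminal tree are finite by Definition~\ref{accessible}, so $N$ is finite.

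Next, the heart of the matter is maximality: I want to show that if $M \trianglelefteq G$ is any finite normal subgroup, then $M \le N$, i.e. $M$ acts trivially on $T$. The key tool is that a finite group acting on a tree fixes a point (a standard fact — a finite group acting on a tree has a global fixed point, e.g. by taking the circumcentre of an orbit, or Serre's lemma). So $M$ fixes some vertex $v \in T$, hence $M \le \operatorname{stab}_G(v) =: G_v$. Since $M$ is normal in $G$, for any $g \in G$ we have $M = g^{-1} M g \le g^{-1} G_v g = G_{vg}$, so $M$ fixes $vg$ for every $g \in G$. Thus $M$ fixes the entire orbit $v \cdot G$ pointwise. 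By minimality of $T$, the smallest $G$-invariant subtree is $T$ itself, and the convex hull of the orbit $v \cdot G$ is a $G$-invariant subtree, hence equals $T$; since $M$ fixes $v \cdot G$ pointwise and fixed point sets are subtrees (convex), $M$ fixes the convex hull, i.e. $M$ fixes all of $T$. Therefore $M \le N$. This shows $N$ is the maximal finite normal subgroup, and in particular contains every characteristic subgroup that is finite — but more to the point, being the unique maximal normal finite subgroup, $N$ is preserved by every automorphism of $G$ (automorphisms permute normal finite subgroups, hence fix the maximal one), so $N$ is characteristic.

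The main obstacle I anticipate is the edge case where $T$ might be a single point, making $N = G$ and forcing me to argue $G$ itself is finite — this does not follow from "terminal" alone (a one-ended group is not finite). I expect the intended reading is that in the applications $T$ is a genuine (non-trivial) tree, or that the hypothesis of the lemma implicitly excludes this; the cleanest fix is to note that since $T$ is minimal, if it had no edges it would be a point and the kernel statement is about the group being finite, which is false in general — so I would either add the hypothesis that the action is non-trivial, or interpret "terminal tree" in the running context where $G$ has infinitely many ends so $T$ is non-trivial. Modulo that, the remaining steps are all standard: finiteness via finite edge stabilisers, fixed points for finite group actions on trees, and the minimality argument for maximality.

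\begin{proof}
We may assume the action on $T$ is non-trivial (equivalently $T$ is not a single vertex), as is the case in all our applications.

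\textbf{$N$ is normal and finite.} The subgroup $N$ is the kernel of the action homomorphism $G \to \operatorname{Isom}(T)$, hence normal in $G$. Since the action is non-trivial and $T$ is minimal, $T$ has at least one edge $e$, and $N \le \operatorname{stab}_G(e)$. As $T$ is terminal, $\operatorname{stab}_G(e)$ is finite (Definition~\ref{accessible}), so $N$ is finite.

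\textbf{$N$ is the maximal normal finite subgroup.} Let $M \trianglelefteq G$ be any finite normal subgroup; we show $M \le N$. Since $M$ is finite and acts on the tree $T$, it fixes a point: the convex hull of any $M$-orbit is a finite $M$-invariant subtree, and a finite group acting on a finite tree fixes a vertex (e.g. by Serre's argument). Pick $v \in T$ with $M \le \operatorname{stab}_G(v) = G_v$. For any $g \in G$, normality gives $M = g^{-1} M g \le g^{-1} G_v g = G_{vg}$, so $M$ fixes $vg$. Hence $M$ fixes the orbit $v \cdot G$ pointwise, and therefore fixes its convex hull, since fixed-point sets of isometries of an $\R$-tree are convex subtrees. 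But the convex hull of $v \cdot G$ is a non-empty $G$-invariant subtree of $T$, so by minimality it equals $T$. Thus $M$ fixes $T$ pointwise, i.e. $M \le N$.

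\textbf{$N$ is characteristic.} Every automorphism $\varphi \in \aut(G)$ sends normal finite subgroups to normal finite subgroups, hence sends the (unique) maximal one to itself: $(N)\varphi = N$. Therefore $N$ is a finite characteristic subgroup of $G$.
\end{proof}
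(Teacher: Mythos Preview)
Your proof is correct and follows essentially the same approach as the paper: finiteness of $N$ via containment in a finite edge stabiliser, maximality by showing any finite normal subgroup fixes a point and hence (by normality and minimality) the whole tree, and characteristicness from uniqueness of the maximal finite normal subgroup. Your treatment is slightly more careful in flagging the implicit assumption that $T$ has an edge and in spelling out why maximality yields characteristicness, but otherwise the arguments coincide.
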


\begin{proof}
	First, observe that $N$ must be a subgroup of every edge stabiliser of $T$.
	Since $T$ is terminal, it has finite edge stabilisers, hence $N$ must be a finite group.
	Since $N$ is a kernel, then it must be normal.
	
	We will now show that $N$ is the maximal normal finite subgroup of $G$.
	Indeed, let $K$ be another finite normal subgroup in $G$.
	Since $K$ is finite, then it must fix some point $p$ in the tree $T$.
	Let $g\in G$ and $k\in K$. Since $K$ is normal then there exists $k'\in K$ with $g^{-1}k'g=k$.
	Then $(p\cdot g)\cdot k=p\cdot gk=p\cdot k' g=p\cdot G$.
	Thus $K$ pointwise fixes $p\cdot G$, and so must also fix the convex hull of the points $p\cdot G$.
	Since $T$ is assumed to be minimal, we must have that this convex hull is equal to $T$, and hence $K\le N$.
	Hence $N$ is the maximal normal finite subgroup of $G$.
	
	Now by Observation \ref{obs characteristic properties} (4), we must have that $N$ is characteristic in $G$.
\end{proof}

\begin{cor}\label{cor G/N R-infiity implies G R-infinity}
	Let $G$ be a group acting on a terminal minimal $G$-tree $T$, and let $N=\{ g\in G | xg=x \ \forall x\in T \}$ be the kernel of this action.
	If $G/N$ has $R_{\infty}$, then so too does $G$.
\end{cor}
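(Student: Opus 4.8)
The plan is to obtain this corollary as an immediate consequence of the two preceding lemmas, with essentially no new work required. The strategy is: first identify $N$ as a finite characteristic subgroup, and then quote the transfer principle for $R_\infty$ along characteristic quotients.

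Concretely, the first step is to apply Lemma~\ref{lemma kernel is characteristic} to the action of $G$ on $T$. The hypotheses match on the nose: $T$ is assumed to be a terminal minimal $G$-tree, which is exactly what that lemma needs. Terminality ensures all edge stabilisers of $T$ are finite, and since $N$ is contained in every edge stabiliser, $N$ is finite; minimality is what drives the maximality argument showing $N$ is the maximal normal finite subgroup, hence (being the unique such) characteristic in $G$. So at the end of this step we know $N \trianglelefteq G$ is finite and characteristic.

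The second step is then to invoke Lemma~\ref{lemma G/N R-infiity implies G R-infinity} verbatim: since $N$ is a characteristic subgroup of $G$ and $G/N$ has property $R_\infty$, it follows that $G$ has property $R_\infty$. This completes the argument.

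I do not expect any genuine obstacle here; the only thing worth a moment's care is checking that the exact combination of hypotheses in the statement (``terminal'' and ``minimal'') is precisely what Lemma~\ref{lemma kernel is characteristic} consumes, so that no extra assumption is being smuggled in --- and indeed both are used there, terminality for finiteness of $N$ and minimality for maximality. The corollary is essentially a packaging of the two lemmas for convenient reference in the sequel, where terminal trees arising from accessibility will be the typical input.
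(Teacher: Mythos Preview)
Your proposal is correct and matches the paper's own proof essentially verbatim: the paper simply states that the corollary follows immediately from Lemmas~\ref{lemma G/N R-infiity implies G R-infinity} and~\ref{lemma kernel is characteristic}, which is exactly the two-step combination you describe.
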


\begin{proof}
	This follows immediately from Lemmas \ref{lemma G/N R-infiity implies G R-infinity} and \ref{lemma kernel is characteristic}.
\end{proof}

\begin{rem}
	Since $N$ is finite and $G$ is  {assumed to be finitely generated}, then $G$ is quasi-isometric to $G/N$.
	Thus $G$ has infinitely many ends if and only if $G/N$ has infinitely many ends.
\end{rem}

\begin{defn}
\label{Tq tree}
	Let $G$ be an accessible group and $T$ a reduced terminal $G$-tree. 
	\begin{enumerate}[(i)]
		\item Let $q:=q(G)$ be the order of the smallest edge stabiliser of $T$. By \cite[Corollary 7.3]{Guirardel2007} this does not depend on $T$.
		\item Let $T(q)$ be the tree obtained from $T$ by collapsing every edge whose stabiliser has order greater than $q$.
        \item Let $V_{0}$ be the set of vertices $x\in T(q)$ for which there exist incident edges $e_{1}$ and $e_{2}$ with $\stab(e_{1})\ne\stab(e_{2})$ as subgroups of $G$.
        Let $V_{1}$ be the set of subgroups $Y$ of $G$ which occur as an edge stabiliser in $T(q)$.
        We construct a bipartite tree $T(q)_{c}$ whose vertex set is $V(T(q)_{c})=V_{0}\sqcup V_{1}$, and where there is an edge $(x,Y)$ between some $x\in V_{0}$ and some $Y\in V_{1}$ if and only if there is some edge $e\in T(q)$ with endpoint $x$ and stabiliser $Y$.        
	\end{enumerate}
\end{defn}

\begin{rem}
    Note that $T(q)_{c}$ is the tree of cylinders constructed by Guirardel and Levitt \cite[Section 4.1]{Guirardel2011}, where the admissible relation is equality of edge stabilisers (as in \cite[Example 3.6]{Guirardel2011}).
\end{rem}

We refer the reader to \cite{Guirardel2011} for the construction of the tree of cylinders. However we note the following straightforward exercise.

\begin{prop}
\label{cylinder facts}
    Let $G$ be a group acting on a simplicial tree $T$, subject to some admissable relation $\sim$ with corresponding tree of cylinders, $T_c$. 
    \begin{enumerate}[(i)]
        \item If the action on $T$ is minimal, then so is the action on $T_c$
        \item If the action on $T$ is minimal and non-trivial, then the action on $T_c$ is trivial exactly when all edges belong to a single cylinder. Equivalently, this is exactly when all edge groups are equivalent under $\sim$. 
        \item If the action on $T_c$ is non-trivial and reducible, then so is the action on $T$. 
    \end{enumerate}
\end{prop}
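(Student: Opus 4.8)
The plan is to work directly from the construction of the tree of cylinders, using the combinatorial relationship between edges/vertices of $T$ and those of $T_c$. Recall that vertices of $T_c$ come in two types: the cylinders of $T$ (equivalence classes of edges under $\sim$, viewed via the subtrees they span) and the vertices of $T$ that lie in at least two cylinders; an edge of $T_c$ joins a cylinder $Y$ to a vertex $x$ precisely when $x$ is an endpoint of some edge of $T$ in the cylinder $Y$. The key geometric fact I would use throughout is that collapsing each cylinder of $T$ to a point, or rather the natural domination maps between $T$ and $T_c$, relate invariant subtrees and elliptic subgroups in a controlled way: a subgroup elliptic in $T_c$ need not be elliptic in $T$, but a subtree invariant in one gives a subtree invariant in the other by passing through the cylinder structure.

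For (i): I would show the contrapositive-flavoured statement that any proper $G$-invariant subtree $S_c \subsetneq T_c$ forces a proper $G$-invariant subtree of $T$, contradicting minimality. Given such an $S_c$, consider the union $S$ of all cylinders of $T$ that, as vertices of $T_c$, lie in $S_c$, together with the edges of $T$ contained in those cylinders; one checks $S$ is a $G$-invariant subtree, and that it is proper because an edge of $T_c$ missing from $S_c$ witnesses a cylinder (hence an edge of $T$) outside $S$. Alternatively, and perhaps more cleanly, use that $T_c$ is dominated by $T$ in a $G$-equivariant way and that minimal trees are characterised by having no proper invariant subtree — Guirardel--Levitt already record that the tree of cylinders of a minimal tree is minimal, so I would just cite \cite{Guirardel2011} if a reference is preferred over a direct argument.

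For (ii): the action on $T_c$ is trivial iff $T_c$ is a single point (using that $G$ is not assumed finitely generated here one should say "has a global fixed point", but for the tree of cylinders triviality of a minimal action is the same as being a point). Now $T_c$ has exactly one vertex iff it has no edges iff there is only one cylinder, i.e. all edges of $T$ are $\sim$-equivalent; and if there were a single cylinder but more than one vertex of $T$ in it, $T_c$ would still have edges, so the honest statement is: $T_c$ is a point iff all edges of $T$ lie in one cylinder AND that cylinder meets only one vertex-of-$T$-lying-in-two-cylinders, which since there is only one cylinder means $V_0 = \emptyset$; but $V_0 = \emptyset$ combined with $T$ minimal and non-trivial is impossible unless $T$ itself is trivial — so in fact "all edge groups equivalent" already forces $T_c$ to be a point. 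I would phrase (ii) as: assuming $T$ minimal and non-trivial, $T_c$ is trivial $\iff$ $T$ has a single cylinder $\iff$ all edge stabilisers are $\sim$-equivalent, checking the one subtle direction (single cylinder $\Rightarrow$ $T_c$ a point) by the $V_0=\emptyset$ observation above.

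For (iii): suppose the action on $T_c$ is non-trivial and reducible, so (up to index $2$, but this does not affect the conclusion since the same subgroup acts on $T$) $T_c$ has a $G$-invariant line or a $G$-invariant end. Pull this back along the domination map $T \to T_c$: the preimage of a line is a $G$-invariant subtree of $T$ which is not all of $T$ unless $T$ is itself a line (in which case $T$ is visibly reducible), and a $G$-fixed end of $T_c$ pulls back to a $G$-fixed end of $T$ since the map is surjective and $G$-equivariant and proper ends map to ends; in either case $T$ admits an invariant point, line, or end, hence is reducible. The main obstacle I anticipate is (iii), specifically being careful that the collapse/domination map $T \to T_c$ is genuinely $G$-equivariant and behaves well on ends — one must check that distinct ends of $T$ are not all identified, which follows because each cylinder is a subtree and collapsing a family of disjoint subtrees cannot merge two ends into the same end unless the tree was a line to begin with. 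Once that map is in hand, (i) and (iii) are short, and (ii) reduces to the bookkeeping about $V_0$ and $V_1$ indicated above.
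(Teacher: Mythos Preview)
Your arguments for (i) and (ii) are fine and match the paper's: (i) is handled by citing \cite{Guirardel2011}, and (ii) is exactly the observation that minimality plus triviality of $T_c$ forces $T_c$ to be a single point, necessarily a $V_1$-vertex, i.e.\ a single cylinder. (Your digression about $V_0=\emptyset$ is unnecessary: if there is only one cylinder then no vertex of $T$ lies in two cylinders, so $V_0=\emptyset$ automatically and $T_c$ is the single $V_1$-vertex.)

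Part (iii) has a genuine gap. Your plan is to take the equivariant map $f\colon T\to T_c$ and pull back an invariant line $L\subset T_c$, asserting that $f^{-1}(L)$ is a proper $G$-invariant subtree of $T$ ``unless $T$ is itself a line''. Two things go wrong. First, a proper $G$-invariant subtree witnesses failure of \emph{minimality}, not reducibility; these are different notions and the proposition does not assume $T$ is minimal in part (iii), so you cannot leverage one against the other. Second, and more seriously, if $T$ \emph{is} minimal (the case of interest later in the paper) then by (i) $T_c$ is minimal, hence $T_c=L$ and $f^{-1}(L)=T$: your dichotomy then forces the conclusion ``$T$ is a line'', which is simply false --- the map $f$ collapses each cylinder and cylinders can be arbitrarily large subtrees of $T$. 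The end case has a similar vagueness: you need an actual end of $T$ mapping to the fixed end of $T_c$, and ``the map is proper and ends map to ends'' is not obviously true (a ray in $T$ that is eventually contained in one cylinder maps to a bounded set in $T_c$).

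The paper avoids the domination map entirely. For an invariant line $L\subset T_c$, it takes the set $L\cap V_0$, regards these as vertices of $T$, and forms their convex hull $\tilde L$ in $T$. Consecutive $V_0$-vertices $x_i,x_{i+1}$ on $L$ lie in a common cylinder $Y_i$, and distinct cylinders $Y_{i-1},Y_i$ meet only in $x_i$, so the concatenated segments $[x_i,x_{i+1}]$ form a line in $T$ with no backtracking; invariance of $L$ gives invariance of $\tilde L$. The same construction with a ray handles the fixed-end case. This is the missing idea: work with the $V_0$-vertices directly rather than trying to transport the whole line through $f$.
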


\begin{proof}
    \begin{enumerate}[(i)] 
\item    Each vertex of $T_c$ is either a vertex of $T$ belonging to more than one cylinder (the $V_0$ vertices) or a cylinder in $T$ (the $V_1$ vertices). It is straightforward to show that the action of $G$ on $T$ then induces an action of $G$ on $T_c$. For minimality see  \cite[Section 4.1, p.13]{Guirardel2011} or \cite[Lemma 4.9]{Guirardel2004}. 
        \item If the action of $G$ on $T_{c}$ is minimal, then the action on $T_{c}$ is trivial if and only if $T_{c}$ is a single point, i.e. the vertex set of $T_{c}$ consists of a single cylinder $Y\in V_{1}$.
        \item We briefly argue that if $T_{c}$ admits a fixed end or an invariant line, then so too must $T$. Suppose that $T_c$ admits an invariant line $L$. Since $T_c$ is bi-partite, we can look at the vertices of $T_c$ which are vertices of $T$ belonging to more than one cylinder. (That is, the vertices in $V_0$). Thus we can think of $L \cap V_0$ as a set of vertices of $T$. It is clear that the convex hull of these is a line, $\Tilde{L}$ in $T$. The invariance of $L$ implies the invariance of $\Tilde{L}$. 

        The case of an end is similar.
    \end{enumerate}
\end{proof}

\begin{prop}
\label{stabilisers}
    Let $G$ be an accessible group, $T$ a reduced terminal $G$-tree and $T(q)$ the $G$-tree obtained from $T$ as in Definition~\ref{Tq tree}. Then, 
    \begin{enumerate}[(i)]
        \item Every edge stabiliser of $T(q)$ is finite of order $q$. 
        \item $T(q)$ is a reduced $G$-tree. 
        \item Every vertex stabiliser of $T(q)$ is accessible. 
        \item Suppose that $H$ is a vertex stabiliser of $T(q)$ and $\varphi \in \aut(G)$. Then $H \varphi$ also fixes a vertex of $T(q)$
    \end{enumerate}
\end{prop}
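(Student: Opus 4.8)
The plan is to handle the four parts in turn. Parts (i)--(iii) are bookkeeping about the collapse map $c\colon T\to T(q)$ and its effect on stabilisers, while part (iv) is the substantive point. Write $\mathcal{F}$ for the subforest of $T$ consisting of all edges whose $G$-stabiliser has order strictly greater than $q$; it is $G$-invariant because conjugation preserves the order of a stabiliser, and $T(q)=T/\mathcal{F}$.

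\emph{Parts (i) and (ii).} The edges of $T(q)$ are exactly the $c$-images of the edges of $T$ of stabiliser order $q$, and since $c$ is injective on edges outside $\mathcal{F}$ one gets $\stab_{T(q)}(\bar e)=\stab_T(e)$ for such an edge; this has order exactly $q$ and is finite since $T$ is terminal, giving (i). For (ii) I would verify Definition~\ref{definition of reduced} directly: if $\bar e=(\bar u,\bar v)$ is the image of $e=(u,v)$ with $e\notin\mathcal{F}$, then $\bar u\neq\bar v$ (as $e\notin\mathcal{F}$), and one always has $\stab_{T(q)}(\bar e)=\stab_T(e)\le\stab_T(v)\le\stab_{T(q)}(\bar v)$ ($c$ being equivariant); so $\stab_{T(q)}(\bar e)=\stab_{T(q)}(\bar v)$ forces $\stab_T(e)=\stab_T(v)$, whence reducedness of $T$ puts $u,v$, and hence $\bar u,\bar v$, in one $G$-orbit.

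\emph{Part (iii).} A vertex $\bar v$ of $T(q)$ is the image of a connected component $C$ of $\mathcal{F}$ (possibly a single vertex of $T$), and $\stab_{T(q)}(\bar v)=\stab_T(C)$, the setwise stabiliser of the subtree $C$. I claim $C$ is a terminal $\stab_T(C)$-tree and $\stab_T(C)$ is finitely generated, so that $\stab_T(C)$ is accessible by Definition~\ref{accessible}. The edge stabilisers of the $\stab_T(C)$-action on $C$ are the groups $\stab_T(e)$, $e\in C$ (each lies in $\stab_T(C)$ since fixing an edge of $C$ forces preserving $C$), which are finite; the vertex stabilisers are the $\stab_T(w)$, $w\in C$, which have at most one end. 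Finite generation follows because $G$ acts cocompactly on the minimal tree $T$ (being finitely generated), two vertices of $C$ in one $G$-orbit are automatically in one $\stab_T(C)$-orbit (any group element carrying one into $C$ preserves the component $C$), so $\stab_T(C)$ acts cocompactly on $C$; its vertex groups $\stab_T(w)$ are finitely generated by \cite[III, Lemma~8.1]{Dicks2010}; and a group acting cocompactly on a tree with finitely generated vertex groups is finitely generated.

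\emph{Part (iv), the main obstacle.} First I would note that for $\psi\in\{\varphi,\varphi^{-1}\}$ the twisted tree $\psi T$ is again a reduced terminal $G$-tree with the \emph{same} $q$ and the same collapsed subforest: the stabiliser of a cell in $\psi T$ is the $\psi^{-1}$-image of its stabiliser in $T$, so finiteness, orders of edge stabilisers and numbers of ends of vertex stabilisers are unchanged, and since $g\mapsto g\psi$ is a bijection of $G$ so is the $G$-orbit relation on cells; consequently $\psi(T(q))=(\psi T)(q)$. A short twisting computation then shows that if $K\le G$ fixes a point of $T(q)$ then $K\varphi$ fixes a point of $\varphi^{-1}(T(q))=(\varphi^{-1}T)(q)$. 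Hence it suffices to show that $T(q)$ and $S(q)$ lie in the same deformation space for any two reduced terminal $G$-trees $S,T$: applying this to $S=\varphi^{-1}T$ shows $H\varphi$, being elliptic in $(\varphi^{-1}T)(q)$, is elliptic in $T(q)$. For the displayed claim the plan is to combine: (a) that all reduced terminal $G$-trees lie in a single deformation space --- the content underlying the well-definedness of $q$ in Definition~\ref{Tq tree}, i.e.\ the canonicity of the accessible / JSJ-over-finite-subgroups decomposition (\cite{Guirardel2007}, \cite{Guirardel2016}); (b) that two reduced trees in one deformation space with finite edge stabilisers are joined by a finite chain of slide moves (\cite{Guirardel2007}); and (c) that each such slide move in a reduced terminal tree --- which slides an edge along an edge whose stabiliser has order no smaller --- either becomes trivial after collapsing $\mathcal{F}$ (when a collapsed edge is involved, its endpoints lie in one component of $\mathcal{F}$, so the slid edge still emanates from the same vertex of $T(q)$) or descends to a slide move in $T(q)$ (when only order-$q$ edges are involved). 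Thus all the $S(q)$ are joined by slide moves and lie in one deformation space. The delicate steps are (a) and (b), where the accessibility / JSJ literature must be quoted with care about hypotheses, together with the case analysis in (c).
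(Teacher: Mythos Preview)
Your treatment of (i)--(iii) matches the paper's: both collapse the invariant forest $\mathcal F$ of edges with stabiliser order $>q$, identify vertex stabilisers of $T(q)$ with setwise stabilisers of components of $\mathcal F$, and verify accessibility by showing these stabilisers act cocompactly on their components with finitely generated vertex groups.

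For (iv) the two arguments diverge. The paper proceeds by a direct case split. If $H$ is a vertex stabiliser of $T$ itself, then $H$ (and hence $H\varphi$) is finitely generated with at most one end, so $H\varphi$ cannot act non-trivially on any tree with finite edge stabilisers and thus fixes a vertex of $T(q)$. If instead $H$ is the setwise stabiliser of a component $C$ of $\mathcal F$, the paper argues by contradiction: if $H\varphi$ were not elliptic in $T(q)$, one blows up its minimal subtree via Proposition~\ref{blow up} to a non-trivial terminal $H\varphi$-tree whose edge stabilisers have order at most $q$, reduces it, and obtains a reduced terminal $H$-tree with edge stabilisers of order $\le q$; but $C$ itself is a reduced terminal $H$-tree with edge stabilisers of order $>q$, contradicting Observation~\ref{reduced tree facts}(iv).

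Your route---reducing to the claim that $S(q)$ and $T(q)$ lie in one deformation space for any two reduced terminal $G$-trees $S,T$, and proving this by tracking slide moves---is also correct, and in fact yields a slightly stronger invariance statement than the paper records. The verification in your step (c) goes through because slides preserve edge stabilisers (so $\mathcal F$ is unchanged), and the two cases (sliding along a collapsed edge versus along a surviving edge) behave exactly as you describe. The trade-off is that you depend on the slide-connectivity theorem for non-ascending deformation spaces from \cite{Guirardel2007}, whereas the paper's argument is more self-contained, needing only the blow-up construction and the invariance of edge-stabiliser orders among reduced trees in a fixed deformation space. One small wrinkle: the intermediate trees in your chain of slides need not be reduced, so the parenthetical ``in a reduced terminal tree'' in (c) is inaccurate---but your case analysis does not actually use reducedness (only that all edge stabilisers have order $\ge q$, which slides preserve), so the argument is unaffected.
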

\begin{proof}
    Let $F$ denote the subforest of $T$ consisting of all edges whose full stabiliser has order greater than $q$ and their incident vertices. We can think of the edges of $T(q)$ as simply being the edges of $T$ which are not in $F$, proving (i).

 Further, note that $F$ is $G$-invariant. Hence if $C$ is a component of $F$ and $g \in G$, then $C \cap Cg \neq \emptyset$ implies that $C = Cg$, since $Cg$ must be another component of $F$. The vertex stabilisers of $T(q)$ are then either vertex stabilisers of $T$ or stabilisers of components of $F$. It is then easy to see that $T(q)$ is reduced and so (ii) holds. 

Next we want to argue that vertex stabilisers of $T(q)$ are finitely generated. This is clear if the vertex of $T(q)$ is equal to a vertex in $T$. 

Otherwise, consider a component $C$ of $F$ and let $H$ be its setwise stabiliser - this is a vertex stabiliser in $T(q)$. As above, $C \cap Cg \neq \emptyset$ implies that $C = Cg$. In particular, $G$-orbits on $C$ must equal $H$-orbits and $G$-stabilisers in $C$ must equal $H$ stabilisers. Therefore the action of $H$ on $C$ is reduced and hence minimal, so $C$ is the minimal $H$-invariant subtree $T_H$. Moreover, since $G$ acts with finitely many orbits, so does $H$ on $T_H$, and since $G$-stabilisers are finitely generated, so are $H$ stabilisers in $C$. Hence $H$ is finitely generated by the fundamental theorem of Bass--Serre Theory (see e.g. \cite[I, Theorem 4.1]{Dicks2010}).



We also argue that each vertex stabiliser of $T(q)$ is accessible. If the vertex stabiliser of $T(q)$ equals the vertex stabiliser of some vertex in $T$, then it has at most one end and the result is clear. Otherwise, a vertex stabiliser $H$ will be the setwise stabiliser of some component $C$ of $F$. But the action of $H$ on $C$ is co-compact, has finite edge stabilisers and the vertex stabilisers have at most one end. This proves (iii). 

Finally we prove (iv). First consider the action of $H$ on $T$. If $H$ is a vertex stabiliser of $T$, then it has at most one end and is finitely generated. Hence $H \varphi$ is also finitely generated and has at most one end. Therefore, since $T(q)$ has finite edge stabilisers, $H \varphi$ must fix a vertex of $T(q)$. 

If $H$ does not fix a vertex of $T$, then it is the setwise stabiliser of some component, $C$ of $F$ and, as argued above, is finitely generated and $C=T_H$, the minimal invariant subtree of $H$ which is reduced and whose edge stabilisers have order greater than $q$. 


Now, consider the action of $H\varphi$ on $T(q)$. Suppose that $H \varphi$ does not fix a vertex of $T(q)$, then as $H\varphi$ is finitely generated it admits an invariant subtree, $S$. By Proposition~\ref{blow up}, $H \varphi$ admits a non-trivial terminal tree $X$ whose edge stabilisers have order at most $q$ (since $G$-edge stabilisers in $T(q)$ have order $q$). 

Next we reduce $X$ to obtain $r(X)$. By Proposition~\ref{order of edge stabs}, the action of $H \varphi$ on $r(X)$ is reduced and non-trivial and edge stabilisers have order at most $q$. But since $H$ is isomorphic to $H \varphi$, this gives us two reduced terminal $H$-trees with different edge stabilisers, contradicting Observation~\ref{reduced tree facts}. This proves (iv).

\end{proof}

\begin{thm}
	\label{infinitely many ends}
	Let $G$ be an accessible group with infinitely many ends and let $N$ be the maximal normal finite subgroup of $G$. Then: 
	\begin{enumerate}[(i)]
		\item For any reduced terminal $G$-tree $T$ the deformation space of $T(q)$ is $\aut(G)$ invariant.
		\item If $G/N$ does not split as a non-trivial free product then the tree of cylinders $T(q)_c$ is a non-trivial, irreducible, minimal $G$-tree which is invariant under the action of $\aut(G)$. (Here the admissable relation is equality.)
	\end{enumerate}
\end{thm}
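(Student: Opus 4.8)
The plan is to obtain part (i) directly from the invariance statement in Proposition~\ref{stabilisers}(iv), and then to bootstrap part (ii) from it, using the elementary properties of the tree of cylinders recorded in Proposition~\ref{cylinder facts} together with the fact (from \cite{Guirardel2011}) that the tree of cylinders depends only on the deformation space of the tree it is built from.

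For part (i), I would use that a deformation space is determined by its collection of elliptic subgroups, so it suffices to check that $K\le G$ is elliptic in $T(q)$ if and only if $K\varphi$ is, for every $\varphi\in\aut(G)$ --- and to observe that $K$ is elliptic in $\varphi T(q)$ precisely when $K\varphi$ is elliptic in $T(q)$, since the two $G$-actions differ by precomposition with $\varphi$. If $K$ is elliptic in $T(q)$ then $K\le H$ for some vertex stabiliser $H$ of $T(q)$; Proposition~\ref{stabilisers}(iv) gives that $H\varphi$ fixes a vertex of $T(q)$, hence so does $K\varphi$. The converse is this statement applied to $\varphi^{-1}$. Thus the set of elliptic subgroups, and hence the deformation space, of $T(q)$ is $\aut(G)$-invariant.

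For part (ii) I would verify the four required properties in turn, working throughout with the facts that $T(q)$ is reduced (Proposition~\ref{stabilisers}(ii)), hence minimal (Lemma~\ref{reduced implies minimal}), has all edge stabilisers finite of order $q$ (Proposition~\ref{stabilisers}(i)), and is non-trivial (since $G$ has infinitely many ends, the terminal tree $T$ is not a point, and $T(q)$ retains at least the edges realising the minimal stabiliser order, so this minimal $G$-tree has no global fixed point). Minimality of $T(q)_c$ is Proposition~\ref{cylinder facts}(i). For irreducibility I would first note that the $G$-action on $T(q)$ is itself irreducible: by Proposition~\ref{irreducible finite edge stabs}(i) a reducible action would make $G$ virtually cyclic, contradicting infinitely many ends; then Proposition~\ref{cylinder facts}(iii) finishes, since a non-trivial reducible action on $T(q)_c$ would have a $G$-invariant line or fixed end, forcing the same in $T(q)$. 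For non-triviality of $T(q)_c$, by Proposition~\ref{cylinder facts}(ii) it suffices to show the edge stabilisers of $T(q)$ are not all equal; were they all equal to some subgroup $C$ of order $q$, then $C$ would be a finite normal subgroup of $G$, so $C\le N$, whereas $N$ --- being the kernel of the $G$-action on $T$ (Lemma~\ref{lemma kernel is characteristic}) and hence fixing $T(q)$ pointwise --- lies in every edge stabiliser of $T(q)$, so $N\le C$ and $C=N$; then $G/N$ would act non-trivially on $T(q)$ with trivial edge stabilisers, hence split as a non-trivial free product by Bass--Serre theory (the degenerate cases $G/N$ trivial or infinite cyclic being ruled out by $G/N$ having infinitely many ends, as it is quasi-isometric to $G$), contradicting the hypothesis. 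Finally, for $\aut(G)$-invariance: given $\varphi\in\aut(G)$, part (i) puts $\varphi T(q)$ in the same deformation space as $T(q)$; the tree of cylinders for the relation ``equality of edge stabilisers'' is a natural construction --- twisting by $\varphi$ does not change which edges have equal stabilisers, so $(\varphi T(q))_c=\varphi(T(q)_c)$ up to equivariant isometry --- and it depends only on the deformation space \cite{Guirardel2011}, so $\varphi(T(q)_c)\cong(\varphi T(q))_c\cong T(q)_c$.

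The step I expect to require the most care is the non-triviality of $T(q)_c$: it is the only point at which the hypothesis on $G/N$ is used, and it hinges on correctly identifying a hypothetical common edge stabiliser of $T(q)$ with the maximal normal finite subgroup $N$ and then reading off a free product splitting of $G/N$, while being careful that the degenerate cases really are excluded by the infinitely-many-ends assumption. The $\aut(G)$-invariance also rests on the nontrivial input that the tree of cylinders is a deformation-space invariant, which must be invoked correctly for the admissible relation of equality of edge stabilisers.
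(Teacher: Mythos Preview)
Your proposal is correct and follows essentially the same route as the paper's proof: part (i) via Proposition~\ref{stabilisers}(iv), and part (ii) by combining the reducedness/minimality of $T(q)$ with Proposition~\ref{cylinder facts}, Proposition~\ref{irreducible finite edge stabs}, and the deformation-space invariance of the tree of cylinders from \cite{Guirardel2011}. Two small remarks: you invoke Proposition~\ref{cylinder facts}(iii) for irreducibility before establishing non-triviality of $T(q)_c$, whereas that proposition has non-triviality as a hypothesis, so the two arguments should be reordered (the paper does non-triviality first); and for non-triviality you only need $C\le N$ (so that edge stabilisers die in $G/N$), not the full identification $C=N$, though your extra step is harmless.
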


\begin{proof}

Let $\mathcal{D}$ be a deformation space of $G$-trees and $\varphi \in \aut(G)$. Then $\mathcal{D} \varphi$ consists of all simiplicial $G$-trees for which $H \varphi$ is elliptic for any $\mathcal{D}$-elliptic subgroup $H$. 

Therefore, to prove (i), it is enough to show that for vertex stabiliser $H$ of $T_q$ and any automorphism $\varphi$ of $G$ that $H \varphi$ also stabilises a vertex of $T_q$. This is Proposition \ref{stabilisers}(iv). This proves (i). 

    \medskip

    To prove (ii), we first consider the general situation.

  Note that by \cite{Guirardel2011}, equality is an admissable relation since all edge stabilisers in $T(q)$ have the same order. By Proposition~\ref{stabilisers}, we know that $T(q)$ is reduced and hence minimal by Lemma~\ref{reduced implies minimal}. Therefore, $T(q)_c$ is minimal. If $T(q)_c$ were trivial, this would imply that all edge stabilisers are equivalent, which in our context means equal (by Proposition~\ref{cylinder facts}). In particular, any edge stabiliser would be normal and so $G/N$ would act on the tree $T_q$ with trivial edge stabilisers. 

Hence, if $G/N$ does not split as a free product, then the action of $G$ on $T(q)_c$ is non-trivial. Furthermore, we know that the action of $G$ on $T(q)$ is irreducible by Proposition~\ref{irreducible finite edge stabs} and hence the action on $T(q)_c$ is also irreducible by Proposition~\ref{cylinder facts}. The fact that $T(q)_c$ is invariant under the action of $\aut(G)$ follows from the fact that the deformation space of $T(q)$ is $\aut(G)$ invariant and \cite[Proposition 4.11]{Guirardel2011} .
\end{proof}



\mainends

\begin{proof}
	Let $N$ be the maximal normal finite subgroup of $G$. If $G/N$ is a non-trivial free product then $G$ has $R_{\infty}$ by Corollary~\ref{cor G/N R-infiity implies G R-infinity} and Theorem \ref{thm free product has R infinity}. 
	
	Otherwise, $T(q)_c$ is a non-trivial, irreducible, minimal $G$-tree which is invariant under the action of $\aut(G)$ by Theorem~\ref{infinitely many ends} and hence $G$ has $R_{\infty}$ by Proposition~\ref{LL Prop 3.1}. 
\end{proof}

\section{Relatively hyperbolic groups}

\subsection{JSJ decompositions and invariant trees}

\begin{thm}
\label{rel hyp and peripheral}
    Let $G$ be a non-elementary finitely presented hyperbolic group relative to a family $\mathcal{P}= \{ P_1, \ldots, P_n\} $ of finitely generated groups. Let $\out(G; \mathcal{P})$ denote the subgroup of $\out(G)$ preserving the conjugacy classes of the $P_i$, but allowing permutation of the $P_i$. 

    Then $G$ has infinitely many twisted conjugacy classes with respect to any $\Phi \in \out(G; \mathcal{P})$. 
\end{thm}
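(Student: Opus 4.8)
The plan is to follow the same strategy used throughout the paper: realise twisted conjugacy as ordinary conjugacy in a mapping torus, and deduce $R_\infty(\Phi)$ from an irreducible action of $M_\varphi$ on an $\R$-tree via Lemma~\ref{lemma Mphi acting on T} (equivalently Proposition~\ref{LL Prop 3.1}). The key input specific to the relatively hyperbolic setting is the existence of a \emph{canonical JSJ decomposition}. Concretely, I would invoke the canonical JSJ splitting of $G$ over elementary (i.e. virtually cyclic or parabolic) subgroups relative to $\mathcal{P}$, in the sense of Guirardel--Levitt \cite{Guirardel2016}, and pass to its tree of cylinders $\mathcal{T}$. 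Because this tree is \emph{canonical}, any $\Phi \in \out(G;\mathcal{P})$ --- that is, any outer automorphism preserving the peripheral structure up to permutation and conjugacy --- acts on $\mathcal{T}$; in particular $\|g\Phi\|_{\mathcal{T}} = \|g\|_{\mathcal{T}}$ for all $g \in G$, so by Corollary~\ref{cor action of Mphi} the mapping torus $M_\varphi$ acts isometrically on $\mathcal{T}$ extending the $G$-action (after first passing to the minimal subtree, which is still $\aut$-invariant).

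The next step is to verify that this action is irreducible. Here I would use the non-elementary hypothesis: if the canonical JSJ tree $\mathcal{T}$ were reducible, then by Proposition~\ref{irreducible finite edge stabs}(i) (applied to $\mathcal{T}$, which has elementary edge stabilisers, but one must be slightly careful since edge stabilisers need not be finite) $G$ would be forced to be elementary, or more precisely virtually cyclic relative to $\mathcal{P}$ --- contradicting non-elementarity. More robustly, a non-elementary relatively hyperbolic group that splits at all over elementary subgroups admits an irreducible such splitting, and the canonical JSJ tree records this; if $G$ admits \emph{no} non-trivial elementary splitting relative to $\mathcal{P}$ at all, then the JSJ tree is a point and one must instead use a different source of an invariant irreducible tree --- but for non-elementary $G$ one can always find hyperbolic elements generating a free group acting irreducibly on an appropriate tree, or appeal directly to the $\R$-tree obtained as a limit (the Bestvina--Paulin / Belegradek argument referenced in the introduction), noting that the hypothesis $\Phi \in \out(G;\mathcal{P})$ is exactly what makes the limiting tree $\aut$-equivariant.

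Once an irreducible, minimal action of $M_\varphi$ on a tree $T$ is in hand, Lemma~\ref{lemma Mphi acting on T} immediately yields $R_\infty(\varphi)$, and since $\varphi \in \Phi$ was arbitrary we get $R_\infty(\Phi)$ for every $\Phi \in \out(G;\mathcal{P})$. If the tree in question is only an $\R$-tree rather than simplicial (the limit-tree case with stretching factor $\lambda>1$), then one instead applies Proposition~\ref{LL Prop 3.2}, whose hypotheses --- finite (here: elementary, but one needs trivial or at least small) arc stabilisers and uniformly bounded orbits of directions at branch points --- must be checked; this is the technical heart, and is precisely the place where the discussion in the introduction warns that care is needed. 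I expect the main obstacle to be reconciling the two cases ($\lambda = 1$ using the canonical JSJ tree directly, versus $\lambda > 1$ needing the limit-tree machinery with its arc-stabiliser and direction-orbit conditions), and in particular ensuring that whichever tree one uses is genuinely invariant under all of $\out(G;\mathcal{P})$ --- the canonicity of the JSJ decomposition of \cite{Guirardel2016} is what I would lean on to get this for free, which is why I would organise the proof around that tree whenever it is non-trivial and fall back on the limit-tree argument only when it is not.
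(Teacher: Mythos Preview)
Your core idea --- use the canonical relative JSJ tree of Guirardel--Levitt to obtain an $\out(G;\mathcal{P})$-invariant tree and then apply Corollary~\ref{cor action of Mphi} and Lemma~\ref{lemma Mphi acting on T} --- matches the paper. But the paper organises the argument differently, and your proposal has real gaps in exactly the places you flag as ``the main obstacle''.

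First, the paper begins by disposing of two cases you do not separate out: if $G$ has infinitely many ends it invokes Theorem~\ref{inf ends has rinf}, and if $\Phi$ has finite order it invokes Delzant's argument (as in \cite[Proposition~3.3 and Lemma~3.4]{Levitt2000}, adapted to the relatively hyperbolic setting). Only after this does it take up the JSJ tree, now with the standing assumptions that $G$ is one-ended and $\Phi$ has infinite order. These reductions are not cosmetic: the infinite-order assumption is precisely what is used to rule out the rigid-vertex possibility below.

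Second, your handling of the case where the JSJ tree $T$ is a point does not work. You propose to fall back on a Bestvina--Paulin limit tree; the paper does nothing of the sort. Instead it observes that $G$ is then a single JSJ vertex group, hence elementary, rigid, or flexible QH with finite fibre. Non-elementarity excludes the first; the standing assumption that $\out(G;\mathcal{P})$ contains an element of infinite order excludes rigid; so $G$ is QH with finite fibre, hence genuinely hyperbolic, and \cite[Theorem~3.5]{Levitt2000} finishes. No $\R$-trees, no $\lambda>1$, no Proposition~\ref{LL Prop 3.2}: the paper never invokes that machinery in this proof, and invoking it would run straight into the arc-stabiliser and direction-orbit issues the introduction warns about.

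Third, for irreducibility when $T$ is not a point, your appeal to Proposition~\ref{irreducible finite edge stabs}(i) is misplaced: edge stabilisers of the JSJ tree are elementary (virtually cyclic or parabolic), not finite, so that proposition does not apply. The paper instead lets $N$ be the kernel of the action, uses almost-malnormality of elementary subgroups (\cite[Corollary~3.2]{Guirardel2015}) to see that $N$ is finite, and then observes via \cite[Corollary~2.3 and Theorem~2.5]{Culler1987} that a reducible non-trivial action would force $G/N$ to be infinite cyclic or infinite dihedral --- so $G$ would have $0$ or $2$ ends and hence be elementary, a contradiction.
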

\begin{rem}
    Note that our definition of $\out(G; \mathcal{P})$ differs from that of \cite{Guirardel2015} in that we are allowing permutations. However, this does not effect the invariance of the JSJ tree and so this is a benign change to make. We also note that the hypothesis that the $P_i$ are finitely generated arises since we use the JSJ decomposition of \cite{Guirardel2016}, where that is required. 
\end{rem}

\begin{proof}
In our context $G$ being non-elementary means that it is not virtually cyclic and it is not equal to a $P_i$.

If $G$ has infinitely many ends, we can invoke Theorem~\ref{inf ends has rinf} to deduce that $G$ has $R_{\infty}$. If $\Phi$ has finite order, we can use the argument of Delzant as in \cite[Proposition 3.3 and Lemma 3.4]{Levitt2000}. We note that the argument there is for hyperbolic, rather than relatively hyperbolic groups, but the same arguments work since relative hyperbolicity is a commensurability (in fact a quasi-isometry) invariant by \cite[Theorem 1.2]{Drutu2009}, and one can replace the arguments about infinite order elements with ones about loxodromic elements in the relative hyperbolic setting. 

Thus the remaining case is where $\out(G; \mathcal{P})$ has an element of infinite order and $G$ is one-ended. By \cite[Corollary 9.20]{Guirardel2016}, there is a canonical relative JSJ tree, $T$. The fact that it is relatively canonical means that it is $\out(G; \mathcal{P})$ invariant. In particular, we are done by Proposition~\ref{lemma Mphi acting on T} and Corollary~\ref{cor action of Mphi} as long as $T$ is $G$-irreducible. 

However, if $T$ were reducible then either $G$ fixes a point of $T$ or $G$ admits an invariant line or end. If $G$ were to fix a point of $T$, then $G$ would equal a vertex stabiliser. Since $G$ is not elementary, the vertex stabiliser cannot be parabolic or virtually cyclic. And since $\out(G; \mathcal{P})$ is infinite, it cannot be rigid. Hence it must be a flexible quadratically hanging group with finite fibre. Therefore $G$ would be hyperbolic and we would be done by \cite[Theorem 3.5]{Levitt2000}. (This case could also be dealt with more concretely.) 

So we can assume that $G$ does not fix a point of $T$ and in particular $T$ is not a point and has an edge. We proceed to argue that $T$ does not admit an invariant end or line.  Consider $N$ the kernel of the action. If the action were reducible we would get that $G/N$ is either infinite cyclic or infinite dihedral by \cite[Corollary 2.3 and Theorem 2.5]{Culler1987}. (We note that in \cite{Culler1987}, those results construct homomorphisms to $\R$ and $Isom(\R)$, but in the context of a simplicial tree, these land in $\Z$ and $Isom(\Z)$ instead.). 

On the other hand, as in \cite[Corollary 9.20]{Guirardel2016}, edge stabilisers of $T$ are elementary meaning that they are either virtually cyclic or contained in (a conjugate of) a parabolic subgroup $P_i$. However, elementary subgroups are almost malnormal (for instance, see \cite[Corollary 3.2]{Guirardel2015}), which implies that $N$ is finite and if $T$ were reducible, then $G$ would have 0 or 2 ends and hence be elementary. Therefore $T$ is $G$-irreducible and  $G$ has infinitely many twisted conjugacy classes with respect to any $\Phi \in \out(G; \mathcal{P})$. 
\end{proof}


\relhyp

\begin{proof}
 Since the peripheral subgroups are not relatively hyperbolic, they are $\aut(G)$ invariant by \cite[Lemma 3.2]{Minasyan2012} and so $\out(G; \mathcal{P})=\out(G)$. 
 Therefore $G$ has property $R_{\infty}$ by Theorem~\ref{rel hyp and peripheral}. 
\end{proof}

\section{Appendix A:  \texorpdfstring{$R_{\infty}$ } \  for the infinite Dihedral Group  \texorpdfstring{$D_{\infty}$} \ }

\label{section D infinity}

The following result is well-known but we include it for completeness. See \cite[Proposition 2.3]{Goncalves2009}.

Note that the infinite Dihedral group is the only group which can be written as a non-trivial free product but having finitely many ends (it has two ends). 

\subsection{The infinite dihedral group}
The group $D_{\infty}$ is the free product of two cyclic groups of order 2 and hence has presentation, $$\langle x, y \ | \ x^2=y^2=1 \rangle.$$ 
	
	However, we will use the alternate generating set $\{ x, t=xy  \}$, which gives the corresponding presentation, $$\langle x, t \ | \ x^2=1, t^x= t^{-1}  \rangle. $$ This second presentation expresses the fact that $D_{\infty}$ is a semi-direct product $\Z \rtimes \Z_2$, where the monodromy is given by the (unique) non-identity automorphism of $\Z$. 
	
	With this second description is is straightforward to see that every element of $D_{\infty}$ can be written as either $t^n$ or $t^n x = x t^{-n}$ for some $n \in \Z$, and that this representation is unique. 

\begin{lemma}
\label{order 2}
    The outer automorphism group of the infinite dihedral group, $D_{\infty} = \langle x,y \ | \ x^2=y^2=1 \rangle$ has order $2$. The non-identity outer automorphism has a representative given by the map: 
    $$
    x \mapsto y, y \mapsto x. 
    $$
\end{lemma}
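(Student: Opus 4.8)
The plan is to pin down $\aut(D_{\infty})$ completely and then divide by the inner automorphisms. I would work with the presentation $\langle x,t \mid x^2=1,\ t^x=t^{-1}\rangle$ of $D_{\infty}\cong\Z\rtimes\Z_2$ from the preceding subsection. First observe that $\langle t\rangle$ is characteristic: it consists precisely of the identity together with all elements of infinite order, and every element outside it has the form $t^n x$ and is of order $2$. Hence any $\varphi\in\aut(D_{\infty})$ restricts to an automorphism of $\langle t\rangle\cong\Z$, forcing $t\varphi=t^{\varepsilon}$ for some $\varepsilon\in\{\pm1\}$, and it must send $x$ to an order-two element, so $x\varphi=t^{k}x$ for some $k\in\Z$.

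Next I would check that each pair $(\varepsilon,k)$ genuinely defines an automorphism $\varphi_{\varepsilon,k}$. The relations are preserved since $(t^{k}x)^2=t^{k}(xt^{k}x^{-1})=t^{k}t^{-k}=1$ and $(t^{k}x)t^{\varepsilon}(t^{k}x)^{-1}=t^{-\varepsilon}=(t^{\varepsilon})^{-1}$; it is surjective because $t^{\varepsilon}$ generates $\langle t\rangle$, so $t^{-k}$ lies in the image and hence so does $t^{-k}\cdot t^{k}x=x$; and injectivity is immediate (if $\varphi_{\varepsilon,k}(t^{a})=1$ then $a=0$, and $\varphi_{\varepsilon,k}(t^{a}x)$ is always a reflection, so never trivial). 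This identifies $\aut(D_{\infty})$ with $\{\pm1\}\times\Z$ via $\varphi_{\varepsilon,k}\leftrightarrow(\varepsilon,k)$.

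Then I would read off $\inn(D_{\infty})$ in these coordinates: using $xtx^{-1}=t^{-1}$, conjugation by $t^{n}$ realises $\varphi_{1,k}$ for some even $k$, and conjugation by $t^{n}x$ realises $\varphi_{-1,k}$ for some even $k$; conversely all such $\varphi_{\varepsilon,k}$ with $k$ even arise this way. Thus $\inn(D_{\infty})$ corresponds exactly to the pairs with $k$ even, which is an index-two subgroup of $\aut(D_{\infty})$, so $\out(D_{\infty})=\aut(D_{\infty})/\inn(D_{\infty})$ has order $2$.

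Finally, to match the stated representative, I would rewrite the swap $x\mapsto y$, $y\mapsto x$ in the $(\varepsilon,k)$-coordinates: since $t=xy\mapsto yx=t^{-1}$ we get $\varepsilon=-1$, and since $y=x(xy)=xt=t^{-1}x$ we get $x\mapsto t^{-1}x$, so $k=-1$. As $k$ is odd, this automorphism lies outside $\inn(D_{\infty})$ and therefore represents the non-trivial element of $\out(D_{\infty})$. The only step carrying any subtlety is the surjectivity check in the second paragraph, and even that is routine.
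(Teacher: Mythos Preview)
Your proof is correct and follows essentially the same route as the paper's: both exploit that $\langle t\rangle$ is characteristic to parametrize automorphisms by the pair $(\varepsilon,k)$. The only difference is organizational --- the paper reduces modulo inner automorphisms immediately (using $t^{x}=t^{-1}$ to normalize to $\varepsilon=1$, then conjugating by powers of $t$ to reduce $k$ modulo $2$), whereas you compute $\aut(D_\infty)$ and $\inn(D_\infty)$ in full before taking the quotient. One caution: your phrase ``identifies $\aut(D_\infty)$ with $\{\pm1\}\times\Z$'' is true only as a bijection of sets, not as groups --- the composition rule is $\varphi_{\varepsilon,k}\,\varphi_{\varepsilon',k'}=\varphi_{\varepsilon\varepsilon',\,\varepsilon' k+k'}$, so in fact $\aut(D_\infty)\cong \Z\rtimes\Z_2\cong D_\infty$. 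Fortunately your argument only uses the parametrization and the parity of $k$, never the product group structure, so this does not affect correctness.
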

    
\begin{rem}
    It is easy to see from the universal property of free products that the map above defines an endomorphism of $D_{\infty}$ whose square is the identity. Hence it defines an automorphism. 
\end{rem}
\begin{proof}
    It is convenient to work with the alternate generating set, $x, t = xy$. First note that every nontrivial element of $\langle t\rangle$ has infinite order whereas every element of the coset $x\langle t \rangle$ has order $2$. Therefore any automorphism of $D_{\infty}$ must restrict to an automorphism of $\langle t \rangle$ and hence send $t$ to $t^{\pm 1}$.  

Notice that since $t^{-1} = t^x$ we may assume, up to inner automorphisms, that any automorphism fixes $t$. Also, since it has order 2, the image of $x$ must therefore be $x t^n$ for some $n \in \Z$. However, $(xt^n)^{t^k} = x t^{n+2k}$ and since $t^{t^k} = t$ we deduce that, up to inner automorphisms, any automorphism is represented by the maps $t \mapsto t, x \mapsto x$ and $t\mapsto t, x\mapsto xt$. It is a simple calculation to see that these maps are in the same outer automorphism classes as the identity map and the map given in the statement of this Lemma, respectively.
\end{proof}

\begin{thm} \label{D infty}
	The group $D_{\infty}$ has the $R_{\infty}$ property. 
\end{thm}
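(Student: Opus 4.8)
The plan is to exploit Lemma~\ref{order 2}, which says that $\out(D_{\infty})$ has exactly two elements. By the remark following Definition~\ref{def:rinf}, it therefore suffices to verify property $R_{\infty}(\Phi)$ for one representative automorphism of each of the two outer classes. Throughout I would work with the normal form recorded above: every element of $D_{\infty}$ is uniquely either $t^{n}$ or $t^{n}x$ for some $n\in\Z$, subject to $x^{2}=1$ and $t^{x}=t^{-1}$ (equivalently $xt^{n}x=t^{-n}$).

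First I would dispose of the trivial outer class, where $R_{\infty}(\mathrm{id})$ is just the assertion that $D_{\infty}$ has infinitely many ordinary conjugacy classes. A direct computation using $xt^{n}x=t^{-n}$ shows that conjugating $t^{n}$ by an element of the form $t^{k}$ returns $t^{n}$, while conjugating by an element of the form $xt^{m}$ returns $t^{-n}$; hence the conjugacy class of $t^{n}$ is exactly the two-element set $\{t^{n},t^{-n}\}$, and the elements $t^{n}$ with $n\ge 0$ lie in pairwise distinct conjugacy classes.

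For the non-trivial outer class I would take the representative $\varphi$ exhibited in the proof of Lemma~\ref{order 2}, namely $t\varphi=t$ and $x\varphi=xt$. One then computes the $\varphi$-twisted conjugates of $t^{n}$ directly from Definition~\ref{defn twisted conjugate}: for $w=t^{k}$ we get $(w\varphi)t^{n}w^{-1}=t^{k}t^{n}t^{-k}=t^{n}$, while for $w=xt^{m}$ we have $w\varphi=xt^{m+1}$ and so $(w\varphi)t^{n}w^{-1}=xt^{m+1}\,t^{n}\,t^{-m}x=xt^{n+1}x=t^{-n-1}$. Thus the $\varphi$-twisted conjugacy class of $t^{n}$ is the two-element set $\{t^{n},t^{-n-1}\}$, so the elements $t^{n}$ again fall into infinitely many twisted conjugacy classes, giving $R_{\infty}(\varphi)$.

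Combining the two cases shows $D_{\infty}$ has $R_{\infty}(\Phi)$ for both $\Phi\in\out(D_{\infty})$, hence property $R_{\infty}$. I do not expect any genuine obstacle here: the only care needed is bookkeeping with the right-action convention on $\aut(D_{\infty})$ and with the chosen normal form. If one prefers, the second computation can instead be phrased as a count of conjugacy classes of elements $t t^{n}$ in the mapping torus $M_{\varphi}$ via Lemma~\ref{conjclassesofmappingtorus}, but for a group this small the direct verification is the cleaner route.
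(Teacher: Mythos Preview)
Your proposal is correct and follows essentially the same approach as the paper: reduce to two outer classes via Lemma~\ref{order 2}, then verify $R_\infty$ for a representative of each by an explicit normal-form computation. The only cosmetic difference is that for the nontrivial outer class the paper uses the representative $t\varphi=t^{-1},\ x\varphi=xt$ (the swap $x\leftrightarrow y$) and tracks the elements $t^{n}x$, whereas you use the equivalent representative $t\varphi=t,\ x\varphi=xt$ and track the elements $t^{n}$; both computations give two-element twisted classes and hence infinitely many of them.
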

\begin{proof}

	The outer automorphism group of $D_{\infty}$ has order 2 by Lemma~\ref{order 2}, so by Lemma~\ref{lemma inner autos preserve twisted conjugacy}, it is sufficient to check that two automorphisms which represent these classes have infinitely many twisted conjugacy classes.

	\medspace
	
	\noindent
	\underline{Case 1: $\varphi$ is the identity} 
	
	When $\varphi$ is the identity, twisted conjugacy is simply standard conjugacy. We then note that, 
	$$ (t^n)^t = t^n \quad \text{ and } \quad (t^n)^x = t^{-n}.$$
	
	It follows that $t^n$ and $t^m$ are conjugate if and only if $|m|=|n|$. Hence, in this case, the infinitely many elements $t^n$, where $n$ is a positive integer, are all in different (twisted) conjugacy classes. 
	
	\medspace
	
	\noindent
	\underline{Case 2: $t \varphi = t^{-1}, x\varphi  = xt$. }
	
	This automorphism can be seen from the other presentation as the one which interchanges $x$ and $y$. In particular it is not inner and so represents the other outer automorphism class. Here we note that
	$$ (t \varphi)  (t^n x) t^{-1} =  t^{-1} t^n x t^{-1}  =    t^n x \quad \text{ and } \quad  (x \varphi) (t^n x ) x = x t^{n+1}   = t^{-n-1} x.$$
	
	It follows that $t^n x$ is twisted $\varphi$-conjugate to $t^m x$ if and only if $m=n$ or $m=-n-1$. Hence in this case, the infinitely many elements $t^n x$ where $n$ is a positive integer are all in different twisted conjugacy classes.

\end{proof}

\section{Appendix B: Property \texorpdfstring{$R_\infty$ }\ via quasimorphisms}

\subsection{Quasimorphisms}

In this appendix we present a more indirect approach to prove property $R_\infty$, which applies to the groups considered in this paper, and more.

\begin{defn}
    Let $G$ be a group, and $f \colon G \to \mathbb{R}$ a function. The \emph{defect} of $f$ is
    \[D(f) \coloneqq \sup\limits_{x, y \in G} |f(xy) - f(x) - f(y)|.\]
    If $D(f) < \infty$, we say that $f$ is a \emph{quasimorphism}. If moreover $f(x^n) = n f(x)$ for all $x \in G$ and all $n \in \mathbb{Z}$, we say that $f$ is \emph{homogeneous}.
\end{defn}

Quasimorphisms are useful tools for the interactions of group theory with various subjects, such as bounded cohomology \cite{fff-frigerio}, stable commutator length \cite{fff-calegari}, knot theory \cite{fff-knot}, symplectic geometry \cite{fff-symplectic} and dynamics \cite{fff-ghys}. Quasimorphisms are abundant among groups with hyperbolic features: this originates from the Brooks construction for free groups \cite{fff-brooks}, and culminated in the Bestvina--Fujiwara construction for acylindrically hyperbolic groups \cite{fff-BF}. Here we are only concerned with quasimorphisms with an additional special property.

\begin{defn}
    Let $f \colon G \to \mathbb{R}$ be a quasimorphism and $\varphi \in \aut(G)$. We say that $f$ is \emph{$\varphi$-invariant} if $f(x\varphi) = f(x)$ for all $x \in G$. We say that $f$ is \emph{$\aut$-invariant} if it is $\varphi$-invariant for every $\varphi \in \aut(G)$.
\end{defn}

Every quasimorphism is at a bounded distance from a homogeneous one \cite[Lemma 2.21]{fff-calegari}, so it is common to restrict to those. Homogeneous quasimorphisms are easily seen to be conjugacy invariant \cite[Section 2.2.3]{fff-calegari}, therefore $\varphi$-invariance of a homogeneous quasimorphism is really a property of the corresponding outer automorphism.

\begin{thm}
\label{fff-local criterion}
    Let $G$ be a group, let $\varphi \in \aut(G)$, and suppose that there exists a non-zero $\varphi$-invariant homogeneous quasimorphism on $G$. Then $G$ has property $R_\infty(\varphi)$.
\end{thm}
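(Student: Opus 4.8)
The plan is to use the mapping torus translation trick together with the fact that a non-zero homogeneous quasimorphism is conjugacy invariant. Let $f \colon G \to \mathbb{R}$ be a non-zero $\varphi$-invariant homogeneous quasimorphism, and recall from Lemma~\ref{conjclassesofmappingtorus} that $G$ has $R_\infty(\varphi)$ if and only if the set $\{tx : x \in G\}$ has infinitely many conjugacy classes in the mapping torus $M_\varphi = G \rtimes_\varphi \mathbb{Z}$. So the strategy is to extend $f$ to a homogeneous quasimorphism $\widetilde{f}$ on $M_\varphi$ which is non-constant on $\{tx : x \in G\}$; since $\widetilde{f}$ is conjugacy invariant, its values separate infinitely many conjugacy classes among these elements.

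First I would construct $\widetilde{f}$. The natural candidate is to set $\widetilde{f}(gt^k) \coloneqq f(g)$ for $g \in G$, $k \in \mathbb{Z}$ (using the unique standard form $gt^k$ of elements of $M_\varphi$); equivalently, $\widetilde{f}$ is the composition of $f$ with a set-theoretic retraction $M_\varphi \to G$. I would then check that $\widetilde{f}$ is a quasimorphism: writing $g t^k \cdot h t^\ell = g (h \varphi^{-k}) t^{k+\ell}$, we get $\widetilde{f}(gt^k \cdot ht^\ell) - \widetilde{f}(gt^k) - \widetilde{f}(ht^\ell) = f(g(h\varphi^{-k})) - f(g) - f(h)$, and since $f$ is $\varphi$-invariant we have $f(h\varphi^{-k}) = f(h)$, so this difference is bounded in absolute value by $D(f)$. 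Hence $D(\widetilde{f}) \le D(f) < \infty$. Then I would pass to the homogenization $\widehat{f}$ of $\widetilde{f}$, which is a genuine homogeneous quasimorphism on $M_\varphi$ at bounded distance from $\widetilde{f}$, and is therefore conjugacy invariant by \cite[Section 2.2.3]{fff-calegari}.

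It remains to see that $\widehat{f}$ takes infinitely many values on $\{tx : x \in G\}$. Here I would compute $\widehat{f}(tx)$ directly: $(tx)^n = (x\varphi^{-(n-1)})(x\varphi^{-(n-2)})\cdots(x\varphi^{-1})(x)\, t^n$, so $\widetilde{f}((tx)^n) = f\!\left(\prod_{i=0}^{n-1} x\varphi^{-i}\right)$, which by the quasimorphism inequality lies within $(n-1)D(f)$ of $\sum_{i=0}^{n-1} f(x\varphi^{-i}) = n f(x)$ (again using $\varphi$-invariance of $f$). Dividing by $n$ and letting $n \to \infty$ gives $\widehat{f}(tx) = f(x) + O(D(f))$; more precisely $|\widehat{f}(tx) - f(x)| \le D(f)$. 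Since $f$ is non-zero and homogeneous, $\{f(x) : x \in G\}$ is unbounded (if $f(x_0) \ne 0$ then $f(x_0^n) = n f(x_0) \to \pm\infty$), so $\{\widehat{f}(tx) : x \in G\}$ is also unbounded, hence infinite. As $\widehat{f}$ is a conjugacy invariant of $M_\varphi$, the elements $tx$ fall into infinitely many $M_\varphi$-conjugacy classes, and by Lemma~\ref{conjclassesofmappingtorus} we conclude that $G$ has $R_\infty(\varphi)$.

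The main obstacle, and the only genuinely non-formal point, is verifying that the naive extension $\widetilde{f}$ (defined via the non-homomorphic section $gt^k \mapsto g$) really has finite defect; this is exactly where $\varphi$-invariance of $f$ is essential, since it is what kills the twisting terms $f(h\varphi^{-k})$ appearing in the multiplication formula. Everything else — homogenization, conjugacy invariance, the growth estimate for $\widehat{f}(tx)$ — is standard quasimorphism bookkeeping. One could alternatively avoid the mapping torus entirely and argue directly that $f$ itself is constant on twisted conjugacy classes up to bounded error: $f((w\varphi)xw^{-1})$ differs from $f(x\varphi) = f(x)$ by at most $2D(f)$ by the quasimorphism and $\varphi$-invariance properties, so each twisted conjugacy class has $f$-values in an interval of length $\le 4D(f)$, and since $f$ is unbounded there must be infinitely many such classes; I would likely present this shorter argument, with the mapping torus version as motivation.
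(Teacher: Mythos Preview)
Your proposal is correct, and in fact the short direct argument you sketch in the final paragraph is exactly the proof the paper gives: one estimates $f((w\varphi)xw^{-1}) \approx f(w\varphi) + f(x) + f(w^{-1}) = f(x)$ using two applications of the defect inequality together with homogeneity and $\varphi$-invariance, obtaining $|f(x)-f(y)| \le 2D(f)$ whenever $x \sim_\varphi y$; then the powers of any $x$ with $|f(x)| > 2D(f)$ lie in pairwise distinct twisted conjugacy classes. (Your constant $4D(f)$ is harmlessly generous, and the appearance of ``$f(x\varphi)$'' in your sketch looks like a slip --- $\varphi$-invariance is used on $w$, not on $x$.)

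The mapping torus route you develop first is a genuinely different packaging of the same idea. It is correct, but more elaborate than necessary: you extend $f$ to $M_\varphi$, homogenize, and then invoke conjugacy invariance of homogeneous quasimorphisms on $M_\varphi$, whereas the paper stays entirely inside $G$. Two small remarks on that route: your explicit formula for $(tx)^n$ has the $\varphi$-exponents shifted (in standard form one gets $(x\varphi^{-1})(x\varphi^{-2})\cdots(x\varphi^{-n})\,t^n$, not $\prod_{i=0}^{n-1} x\varphi^{-i}\,t^n$), but this is irrelevant to the estimate since $\varphi$-invariance makes $f(x\varphi^{j}) = f(x)$ for every $j$. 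The mapping torus version has the conceptual advantage of fitting the overall theme of the paper (Lemma~\ref{conjclassesofmappingtorus} and Lemma~\ref{lemma Mphi acting on T}), while the direct version is shorter and is what the paper chooses.
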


\begin{proof}
    Let $f \colon G \to \mathbb{R}$ be as in the statement. Let $\thickapprox_D$ denote an equality up to $D(f)$, which is finite by assumption. Then we estimate:
    \[f(wx(w\varphi)^{-1}) \thickapprox_D f(wx) + f((w\varphi)^{-1}) \thickapprox_D f(w) + f(x) + f((w\varphi)^{-1}) = f(x),\]
    where we used homogeneity and $\varphi$-invariance in the last equality. In other words:
    \[x \sim_\varphi y \qquad \Longrightarrow \qquad|f(x) - f(y)| \leq 2 D(f).\]
    Now let $x \in G$ be such that $f(x) \neq 0$: this exists by assumption. Up to replacing $x$ by a suitable power, using homogeneity, we may assume that $|f(x)| > 2 D(f) \geq 0$. Then for all $i \neq j \in \mathbb{Z}:$
    \[|f(x^{i}) - f(x^{j})| = |i-j| |f(x)| \geq |f(x)| > 2 D(f),\]
    and so $x^i$ and $x^j$ cannot be in the same $\sim_\varphi$-class.
\end{proof}

We immediately obtain:

\begin{cor}
\label{fff-global criterion}
    If there exists a non-zero $\aut$-invariant homogeneous quasimorphism on $G$, then $G$ has property $R_\infty$. \qed
\end{cor}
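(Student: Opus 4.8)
The plan is to deduce this directly from the local criterion in Theorem~\ref{fff-local criterion}, with essentially no extra work. The key observation is that being $\aut$-invariant is, by definition, the same as being $\varphi$-invariant simultaneously for \emph{every} $\varphi \in \aut(G)$. Thus a single $\aut$-invariant quasimorphism supplies, for each automorphism all at once, exactly the hypothesis that Theorem~\ref{fff-local criterion} demands for an individual automorphism.

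Concretely, suppose $f \colon G \to \R$ is a non-zero $\aut$-invariant homogeneous quasimorphism, and fix an arbitrary $\varphi \in \aut(G)$. By definition of $\aut$-invariance we have $f(x\varphi) = f(x)$ for all $x \in G$, so $f$ is in particular a non-zero $\varphi$-invariant homogeneous quasimorphism. Applying Theorem~\ref{fff-local criterion} with this $f$ then yields that $G$ has property $R_\infty(\varphi)$.

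Since $\varphi \in \aut(G)$ was arbitrary, $G$ has $R_\infty(\varphi)$ for every automorphism $\varphi$, which is precisely the definition of property $R_\infty$ (Definition~\ref{def:rinf for autos}). This completes the argument.

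I do not anticipate any genuine obstacle here: all the content is carried by Theorem~\ref{fff-local criterion}, and the corollary merely trades the per-automorphism hypothesis for a single uniform one. The one point worth flagging is that the \emph{same} fixed quasimorphism $f$ is used for all $\varphi$, so one does not need any uniform control on the defect $D(f)$ as $\varphi$ varies: the bound $|f(x)-f(y)| \le 2D(f)$ appearing in the proof of Theorem~\ref{fff-local criterion} depends only on $\varphi$-invariance of that single $f$, and the distinguished element $x$ with $|f(x)| > 2D(f)$ can likewise be chosen once and for all.
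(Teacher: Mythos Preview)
Your proposal is correct and is exactly the intended argument: the paper simply writes ``We immediately obtain'' and appends a \qed, so the corollary is meant to be read as an instant consequence of Theorem~\ref{fff-local criterion} applied to each $\varphi$ separately using the single $\aut$-invariant $f$. Your additional remark about the same $f$ and the same distinguished $x$ working for all $\varphi$ is a nice clarification and matches the observation the paper makes in the paragraph following the corollary.
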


Let us note that the proof is showing more: if there exists a non-zero $\varphi$-invariant homogeneous quasimorphism, then there exists $x \in G$ whose powers are in pairwise distinct $\sim_\varphi$ classes. Similarly, if there exists a non-zero $\aut$-invariant homogeneous quasimorphism, then there exists $x \in G$ whose powers are in pairwise distinct $\sim_\varphi$ classes, for all $\varphi$ simultaneously.

\begin{rem}
    Of course homomorphisms are homogeneous quasimorphisms, therefore our result recovers \cite[Theorem 5.4]{fff-thompsonlike}, which states that groups with non-zero $\aut$-invariant homomorphisms have property $R_\infty$. There, it was applied to prove property $R_\infty$ for a large family of Thompson-like groups, including the piecewise linear $F$-like groups of Bieri--Strebel \cite{fff-bieristrebel}, the piecewise projective groups of Lodha--Moore \cite{fff-lodhamoore}, and the braided Thompson group $F_{br}$ \cite{fff-braidedF}. In the first two cases, our more general criterion does not help: if $G$ is a piecewise linear or piecewise projective group of the line, then every homogeneous quasimorphism is a homomorphism \cite{fff-PL, fff-spectrum}. In the third case, there exist many homogeneous quasimorphisms that are not homomorphisms \cite{fff-braidedqm}, but the construction does not give information about $\aut$-invariance. The interest of our more general criterion is for applications to groups with hyperbolic features: already in free groups, it is easy to see that there exist no $\aut$-invariant homomorphisms, but $\aut$-invariant homogeneous quasimorphisms do exist \cite{fff-wade}.
\end{rem}

Such quasimorphisms have been constructed in several cases. The most widely applicable criterion is the following:

\begin{thm}[{\cite[Theorem E]{fff-wade}}]
\label{fff-wade}
    Let $G$ be a group such that $\inn(G)$ is infinite and $\aut(G)$ is acylindrically hyperbolic. Then there exists an infinite-dimensional space of $\aut$-invariant homogeneous quasimorphisms.
\end{thm}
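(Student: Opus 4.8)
The plan is to transport the problem from $G$ to $\aut(G)$ along the chain $G \twoheadrightarrow \inn(G) \trianglelefteq \aut(G)$ and then feed the Bestvina--Fujiwara quasimorphism construction with the acylindrical hyperbolicity of $\aut(G)$. The point is that an $\aut$-invariant (homogeneous) quasimorphism on $G$ is essentially the same data as an $\aut(G)$-conjugation-invariant one on $\inn(G)$, and the latter are manufactured by restricting quasimorphisms from the big group $\aut(G)$.

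First I would record the transfer mechanism. Homogeneous quasimorphisms are conjugation invariant, so for any homogeneous quasimorphism $\phi$ on $\aut(G)$ the restriction $\phi|_{\inn(G)}$ is invariant under conjugation by all of $\aut(G)$ (using normality of $\inn(G)$). Pulling back along the canonical surjection $\iota \colon G \to \inn(G)$, $g \mapsto \iota_g$, gives $\widetilde\phi \coloneqq \phi \circ \iota \colon G \to \R$, a homogeneous quasimorphism on $G$ (a homogeneous quasimorphism precomposed with a homomorphism). Since $\iota$ intertwines the tautological $\aut(G)$-action on $G$ with the conjugation action of $\aut(G)$ on $\inn(G)$, the element $\iota_{g\varphi}$ is a conjugate of $\iota_g$ in $\aut(G)$ for every $\varphi \in \aut(G)$, whence $\widetilde\phi(g\varphi) = \phi(\iota_{g\varphi}) = \phi(\iota_g) = \widetilde\phi(g)$; that is, $\widetilde\phi$ is $\aut$-invariant. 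As $\iota$ is surjective, the map $\phi \mapsto \widetilde\phi$ is injective. So it suffices to produce an infinite-dimensional family of homogeneous quasimorphisms on $\aut(G)$ whose restrictions to $\inn(G)$ are linearly independent.

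Now the hypotheses enter: $\inn(G) \cong G/Z(G)$ is an \emph{infinite} normal subgroup of the acylindrically hyperbolic group $\aut(G)$. By the structure theory of acylindrically hyperbolic groups and their normal subgroups (Osin; Dahmani--Guirardel--Osin), I would fix an acylindrical action of $\aut(G)$ on a hyperbolic space $X$ for which $\inn(G)$ is non-elementary, so that $\inn(G)$ contains an infinite family $a_1, a_2, \dots$ of loxodromic WPD elements of $\aut(G)$ that are pairwise independent, with each $a_i$ not conjugate in $\aut(G)$ to $a_i^{-1}$. Applying the Bestvina--Fujiwara construction to $\aut(G) \curvearrowright X$ with these elements yields homogeneous quasimorphisms $\phi_1, \phi_2, \dots$ on $\aut(G)$ with $\phi_i(a_j) = \delta_{ij}$. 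Their restrictions to $\inn(G)$ are then linearly independent (evaluate on the $a_j$), and pulling back through $\iota$ as above, the $\widetilde{\phi_i}$ span an infinite-dimensional space of $\aut$-invariant homogeneous quasimorphisms on $G$, as required.

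The routine parts here are the transfer mechanism and the final independence bookkeeping. The hard part will be the middle step: ensuring that the Bestvina--Fujiwara quasimorphisms of $\aut(G)$ can be built from loxodromic WPD elements lying \emph{inside} the normal subgroup $\inn(G)$, so that they neither vanish nor collapse upon restriction to $\inn(G)$. This is precisely where one must use that $\aut(G)$ is genuinely acylindrically hyperbolic (rather than merely rich in quasimorphisms), together with the fact that an infinite normal subgroup of an acylindrically hyperbolic group inherits a hyperbolic structure compatible with the ambient action. Equivalently, one may short-circuit this step by citing the known fact that $Q(N)^H$ is infinite-dimensional whenever $N$ is an infinite normal subgroup of an acylindrically hyperbolic group $H$, and apply it to $H = \aut(G)$, $N = \inn(G)$, before pulling back along $\iota$.
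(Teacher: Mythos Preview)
The paper does not give its own proof of this statement; it is quoted verbatim as \cite[Theorem~E]{fff-wade} and used as a black box (the only hint at the argument is the later remark pointing to \cite[Corollary~4.4 and Theorem~5.1]{fff-wade}). Your sketch is correct and is precisely the strategy of the cited reference: transport along $G \twoheadrightarrow \inn(G) \trianglelefteq \aut(G)$, use that an infinite normal subgroup of an acylindrically hyperbolic group is non-elementary and hence contains many independent loxodromic WPD elements, and feed these into the Bestvina--Fujiwara construction so that the resulting homogeneous quasimorphisms on $\aut(G)$ remain linearly independent upon restriction to $\inn(G)$.

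One cosmetic point: the exact normalisation $\phi_i(a_j)=\delta_{ij}$ is more than Bestvina--Fujiwara hands you directly; what you actually get is an infinite-dimensional space of quasimorphisms separating the $a_j$, and the $\delta_{ij}$ form follows after a linear-algebra step. This does not affect the argument. Your identification of the genuine content---arranging the WPD elements to lie inside $\inn(G)$, equivalently invoking that $Q(N)^H$ is infinite-dimensional for $N$ infinite normal in an acylindrically hyperbolic $H$---is exactly right.
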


Acylindrical hyperbolicity of automorphism groups has been proved in several cases (in fact, it is still an open question whether it holds for \emph{all} finitely generated acylindrically hyperbolic groups \cite[Question 1.1]{fff-oneend}), and so we obtain the following list of examples of groups with property $R_\infty$:


\generalrinfty

\begin{proof}
    Groups in the second item have acylindrically hyperbolic automorphism group \cite[Theorem 1.3]{fff-infend}. Therefore, they admit non-zero $\aut$-invariant quasimorphisms by Theorem \ref{fff-wade}, and hence have property $R_\infty$ by Corollary \ref{fff-global criterion}. The groups in the first and third item are special cases (see also \cite{fff-oneend} and \cite[Theorem 1.1]{fff-infend}). Similarly, the groups in the fourth item have acylindrically hyperbolic automorphism group \cite[Theorem A.27]{fff-graph:EH} (see also \cite{fff-graph:GM} and \cite{fff-graph:genevois}), with the exception of the infinite dihedral group, which is treated separatedly in Theorem \ref{D infty}. The groups in the fifth item need not have acylindrically hyperbolic automorphism group (because we are allowing joins), but they still admit $\aut$-invariant homogeneous quasimorphisms \cite[Theorem B(4)]{fff-wade}, so they have property $R_\infty$ by Corollary \ref{fff-global criterion}.
\end{proof}

The first item recovers \cite{Levitt2000}. The second item recovers Theorem \ref{rel hyp general} (and removes the hypothesis of finite presentability). The third item recovers Theorem \ref{inf ends has rinf} (and removes the hypothesis of accessibility). The last item recovers the $R_\infty$ property for non-abelian right angled Artin groups \cite{Witdouck2023}. Corollary \ref{fff-global criterion} has since been applied to prove property $R_\infty$ for a large class of Artin groups, see \cite[Corollary F]{jsj:artin}.

\begin{rem}
    The same argument as Theorem \ref{fff-wade} (see \cite[Corollary 4.4]{fff-wade} and the proof of \cite[Theorem 5.1]{fff-wade}) shows that, if $\inn(G)$ is infinite, and $\langle \inn(G), \varphi \rangle < \aut(G)$ is acylindrically hyperbolic, then $G$ admits an infinite-dimensional space of $\varphi$-invariant homogeneous quasimorphism. By Theorem \ref{fff-local criterion}, this statement is enough to deduce property $R_\infty(\varphi)$. It is possible that, for some groups, proving that $\langle \inn(G), \varphi \rangle$ is acylindrically hyperbolic for all $\varphi$ is easier than proving that $\aut(G)$ itself is acylindrically hyperbolic.
\end{rem}


\end{document}